\theoremstyle{plain}
\newtheorem{theorem}{Theorem}[section]
\newtheorem{lemma}[theorem]{Lemma}
\newtheorem{proposition}[theorem]{Proposition}
\newtheorem{corollary}[theorem]{Corollary}
\newtheorem{remark}[theorem]{Remark}
\newtheorem{definition}{Definition}
\theoremstyle{definition}
\numberwithin{equation}{section}
\newenvironment{oss}{\begin{remark} \begin{rm}}{\end{rm} \end{remark}}
\newcommand{\hs}{{\mathcal H}}
\newcommand{\sinf}{S^{n-1}(\infty)}
\newcommand{\E}{{\mathcal E}}
\newcommand{\F}{{\mathcal F}}
\newcommand{\R}{{\mathbb R}}
\newcommand{\N}{{\mathbb N}}
\newcommand{\Hyp}{{\mathbb H}}
\newcommand{\Om}{\Omega}
\newcommand{\weakstar}{\stackrel{\ast}{\rightharpoonup}}
\newcommand{\weak}{\rightharpoonup}
\newcommand{\e}{\varepsilon}
\newcommand{\epsm}{{\varepsilon_m}}
\newcommand{\f}{\varphi}
\newcommand{\dise}{\tilde d}
\newcommand{\res}{\mathop{\hbox{\vrule height 7pt width .5pt depth 0pt
\vrule height .5pt width 6pt depth 0pt}}\nolimits}
\title
[Phase transitions and minimal hypersurfaces in hyperbolic space] {Phase transitions and minimal hypersurfaces in  hyperbolic space}
\author[A. Pisante]
{Adriano Pisante}
\author[M. Ponsiglione]
{Marcello Ponsiglione}
\address[A. Pisante]{Dipartimento di Matematica ``G. Castelnuovo", Sapienza, Universit\'a di Roma, Piazzale A. Moro 2, 00185 Roma, Italy} 
\email[M. Pisante]{pisante@mat.uniroma1.it}
\address[M. Ponsiglione]{Dipartimento di Matematica ``G. Castelnuovo", Sapienza, Universit\'a di Roma,
Piazzale A. Moro 2, 00185 Roma, Italy} \email[M.
Ponsiglione]{ponsigli@mat.uniroma1.it}
\begin{document}
\vskip .2truecm

\begin{abstract}
\small{The purpose of this paper is to investigate the Cahn-Hillard approximation for entire minimal hypersurfaces in the hyperbolic space. Combining comparison principles with minimization and blow-up arguments, we prove existence results for entire local minimizers with prescribed behaviour at infinity. Then, we study the limit as the length scale tends to zero through a $\Gamma$-convergence analysis, obtaining existence of entire minimal hypersurfaces with prescribed boundary at infinity.
In particular, we recover some existence results proved in \cite{A1} and \cite{La} using geometric measure theory.  
\vskip .3truecm \noindent Keywords: Hyperbolic space, phase transitions, boundary value problems, minimal hypersurfaces, variational methods.
\vskip.1truecm \noindent 2000 Mathematics Subject Classification: 30F45, 82B26, 58J32, 49Q05, 	35J20.}
\end{abstract}
\maketitle

\vskip -.5truecm
{\small \tableofcontents}
\section{Introduction}
Let  $(\Hyp^n,g)$ be the hyperbolic space with its standard metric $g$, represented either with the Poincar\'e ball or with the half space model. Given a double well potential $W:\R\to\R$, e.g., $W(u)= \frac{1}{4 } (1-u^2)^2$ and $W_\e(u)=\frac1{\e^2}W(u)$, \, $\e>0$, we consider the energy functional 
\begin{equation} \label{enfun}
\E_\e(u,A):= \int_A \frac{1}{2} \|\nabla_g u\|^2 + W_\e(u) \, dVol_g,
\end{equation}
where $A$ is a bounded open subset of $\Hyp^n$,  $u\in H^1(A;\R)$, and where $\nabla_g$, $\|\cdot\|$  and $dVol_g$ are the gradient, the length of tangent vectors  and the volume element with respect to the Riemannian metric $g$, respectively. 
\par
The critical points, and therefore in particular the minimizers of such energy are solutions of the corresponding Euler-Lagrange equation
\begin{equation}\label{equa}
\Delta_g u + f_\e(u)=0,
\end{equation}
where $f_\e(u):= - W'_\e(u)$ and $\Delta_g$ is the usual (negative) Laplace-Beltrami operator.
\par
In this paper we focus on  entire solutions $u_\e$  (i.e., defined in the whole $\Hyp^n$)  of equation \eqref{equa} that are local minimizers of the energy $\E_\e$ in \eqref{enfun}, according to the following definition. 
\begin{definition}\label{local}
We say that a function $u\in H^1_{loc}(\Hyp^n)$ is a local minimizer of the energy $\E_\e$ defined in \eqref{enfun}, if $\E_\e(u,A)\le \E_\e(v,A)$ for every open bounded subset $A$  of $\Hyp^n$ and for every $v\in H^1_{loc}(\Hyp^n)$ such that  $u-v$ has compact support contained in $A$. 
\end{definition}
In the Euclidean setting, the energy functional \eqref{enfun} is usually referred to as the Cahn-Hillard approximation of the Van der Waals phase transition model. The minimizers  $u_\e$ of the energy $\E_\e$ in \eqref{enfun} describe smeared phase transitions, and their asymptotic behavior as $\e\to 0$ provides a good approximation  of  sharp area minimizing interfaces (actually with constant mean curvature under the usual additional volume constraint).  Roughly speaking, as $\e$ tends to zero,  local minimizers $u_\e$ tend to $\pm 1$ far from a minimal hypersurface $\Sigma$, and make the transition in an  $\e$-neighborhood of their level sets $\Sigma_\e:=\{u_\e = 0\}$, which in turn provide a good approximation of $\Sigma$.   We refer the reader to the important paper \cite{M} for a first rigorous result in this direction and to  \cite{CC}, \cite{HT} for some extensions. Conversely, under suitable non-degeneracy  assumptions, a given  minimal hypersurface $\Sigma$ (or, more generally, a constant mean curvature hypersurface) can be obtained as limit of the zero level sets $\Sigma_\e$ of solutions $u_\e$ to equation \eqref{equa} (see \cite{PR}). 
Thus,  the study of the energy \eqref{enfun} provides a  bridge between semilinear elliptic equations and minimal hypersurfaces both in the Euclidean space and on Riemannian manifolds. We refer the reader to \cite{T2} for a survey on this topic and to \cite{LR} for a first result in case of surfaces of higher codimension related to superconductivity. We refer also to \cite{I} and \cite{BOS} for the analogous link between the gradient flow of (\ref{enfun}) and the mean curvature flow in codimension one and two, respectively.
\par
The goal of this paper is to investigate this classical  connection in hyperbolic space.
The first step in this program  is the construction of entire solutions $u_\e$ to equation \eqref{equa} with prescribed behavior near the {\it sphere at infinity} $\sinf$.  From now on we will assume that  the potential $W_\e$  is of the form $W_\e= \frac{1}{\e^2} W$, where 
$W:\R\to \R_+ \cup\{0\}$ is a $C^2$ function satisfying the following usual assumptions
\begin{equation}\label{ass}
\begin{array}{ll}
i) & W(t) = W(-t) \text{ for all } t\in\R,\\
ii) & \min W = 0 \text{ and }  \{ W(t) = 0\} = \{ -1,1\},\\
iii) & W''(1)>0,\\
iv) & W(t) \text{ is strictly decreasing in } [0,1] \text{ and strictly increasing for } t > 1. 
\end{array}
\end{equation}
We are interested in  solutions which are local energy 
minimizers,   taking the two  minima  $\pm 1$ of the potential  $W$ as boundary values on two different open sets $\Om^{\pm}$ on the sphere at infinity, and making the transition in $\Hyp^n$.  The results we will achieve show that there are plenty of such solutions (actually uncountably many), in analogy with the simpler case of entire bounded harmonic functions in $\Hyp^n$, with arbitrary continuous data at infinity (see \cite{S}, \cite{A3}).  Our results are in striking contrast with what happens in the Euclidean space, where, at least for $n\leq 7$, the nonconstant entire local minimizer of \eqref{enfun} is unique up to isometries (see \cite{SO}, Theorem 2.3). 
\par
Once we have constructed entire solutions $u_\e$ that are locally energy minimizers, the second step consists in letting $\e\to 0$, to obtain a limit function $u^*$ taking only values $\pm 1$ in $\Hyp^n$, in analogy with the asymptotic analysis done in the Euclidean space in \cite{M}. Eventually, when $\Om^+$ and $\Om^-$ have common boundary $L\subset \sinf$, we obtain an existence result for entire minimal hypersurfaces $\Sigma$, the jump set of $u^*$ in $\Hyp^n$, with prescribed behavior $\partial \Sigma = L$ at infinity. This result has been  originally proved in \cite{A1} by methods of geometric measure theory.
\par
For expository convenience  we will state all our results using the Poincar\'e ball model.  First we consider the relevant case where the boundary conditions for $u_\e$ are prescribed on two disjoint spherical caps  $\Om^+ := C^+$ and $\Om^-=C^-$ in $\sinf$ with common boundary $L$.
  
Let $\Sigma= \Sigma(L)$ be the spherical cap in $\Hyp^n$ touching $L$ orthogonally (i.e., the totally geodesic $\Hyp^{n-1}$ asymptotic to $L$ at infinity), and denote by $\dise$ the signed hyperbolic distance from $\Sigma$ with the sign convention $\tilde d(x,\Sigma)\to \pm\infty$ as $x\to C^{\pm}$.  With these special boundary conditions, 
it is possible to perform a one-dimensional reduction of the problem, i.e., to seek for solutions  $U_\e(x):= h_\e (\dise(x,\Sigma(L)))$, for a suitable smooth function $h_\e:\R\to\R$, with $U_\e(x) \to \pm 1$ as $x\to C^{\pm}$ (correspondingly $h_\e(\pm \infty)=\pm 1$).  These kind of one dimensional solutions  exhibit a jump at infinity on the interface $L=\partial C^{\pm}$, that  looks smeared far from the sphere at infinity. The first result of the paper deals with the construction of such one dimensional solutions $U_\e$ that will be the building blocks to treat the general case.  
\begin{theorem}\label{odsthm}
Let  $\Om^+ := C^+$ and $\Om^-:=C^-$ be two disjoint spherical caps  in $\sinf$ with common boundary $L$, and let 
$\Sigma(L)$ be the spherical cap in $\Hyp^n$ touching $\sinf$ along $L$ orthogonally. 
\par
Then there exists a unique solution $U_\e\in C^2(\Hyp^n)$ to equation \eqref{equa} satisfying  $U_\e \equiv 0 $ on $\Sigma$ and   the boundary conditions 
$U_\e(x) \to \pm 1$ as $x\to C^{\pm}$. 
Moreover, $U_\e$ is a local minimizer of the functional $\E_\e$ in \eqref{enfun}, and  it is  one-dimensional, i.e., 
$U_\e (x) = h_\e(\tilde d(x,\Sigma))$ for a suitable smooth odd  increasing function $h_\e$.
Finally, $U_\e (x)\to \text{sgn}(\tilde d(x,\Sigma))$ locally uniformly in $\Hyp^n\setminus \Sigma$ as $\e\to 0$.
\end{theorem}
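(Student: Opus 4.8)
The plan is to reduce the PDE \eqref{equa} to an ODE via the natural geometric symmetry, solve the ODE by a variational argument on a half-line weighted by the Jacobian of the distance function from $\Sigma$, and then verify minimality and the $\e\to 0$ limit. First I would set up coordinates adapted to $\Sigma=\Sigma(L)$: since $\Sigma$ is a totally geodesic $\Hyp^{n-1}$, the signed distance function $\dise=\dise(\cdot,\Sigma)$ is smooth on all of $\Hyp^n$ and the metric splits as $dt^2 + \cosh^2 t\, g_{\Hyp^{n-1}}$ with $t=\dise$. Consequently, for a function of the form $U(x)=h(\dise(x))$ one computes $\Delta_g U = h''(t) + (n-1)\tanh(t)\, h'(t)$, so \eqref{equa} becomes the ODE
\begin{equation*}
h''(t) + (n-1)\tanh(t)\, h'(t) + f_\e(h(t)) = 0, \qquad h(\pm\infty)=\pm 1, \quad h(0)=0.
\end{equation*}
The associated one-dimensional energy is $\int_{\R} \bigl(\tfrac12 (h')^2 + W_\e(h)\bigr)\cosh^{n-1}(t)\,dt$, i.e. the restriction of $\E_\e$ to functions depending only on $t$, the weight $\cosh^{n-1}t$ being the Jacobian factor. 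Note this weight is \emph{not} integrable, so the energy is infinite; the right framework is to minimize among competitors that agree with a fixed reference profile (equivalently with $\pm 1$) outside a compact interval, exactly mirroring Definition \ref{local}.

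Next I would construct $h_\e$. Fix a large interval $[-R,R]$ and minimize the one-dimensional energy over $\{h : h(\pm R)=\pm 1\}$; by the direct method (coercivity from the $(h')^2$ term, lower semicontinuity, and the truncation remark that one may assume $|h|\le 1$ using assumption (iv)) a minimizer $h_\e^R$ exists, is monotone, and solves the ODE on $(-R,R)$. A comparison/uniqueness argument for the ODE — using that $f_\e(s) = -W_\e'(s)$ is decreasing near $s=\pm1$ by (iii) and has the sign of $-s$ on $(-1,1)$ by (iv) — shows $h_\e^R$ is unique with these data, that $0<h_\e^R<1$ on $(0,R)$ with $h_\e^R$ odd (by the symmetry $W(-t)=W(t)$ in (i), so that $-h_\e^R(-t)$ is also a solution and uniqueness forces equality), and that $h_\e^R$ is increasing in $R$ on the positive side; letting $R\to\infty$ yields a limit $h_\e$ solving the ODE on all of $\R$ with $0\le h_\e<1$, $h_\e$ odd and nondecreasing. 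The boundary behavior $h_\e(t)\to 1$ as $t\to+\infty$ follows because the limit $\ell:=\lim h_\e$ must be a zero of $W$ (otherwise $W_\e(\ell)>0$ would be incompatible with $h_\e'\to 0$ and finiteness of the energy increments), hence $\ell=1$; strict monotonicity and $C^2$ regularity are then standard ODE bootstrapping, and strictness $h_\e<1$ plus $h_\e'>0$ follow from uniqueness in the ODE initial value problem. Setting $U_\e(x):=h_\e(\dise(x))$ gives the desired $C^2(\Hyp^n)$ solution of \eqref{equa} with $U_\e\equiv 0$ on $\Sigma$.

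For local minimality I would argue as follows. Given any open bounded $A$ and a competitor $v$ with $U_\e-v$ compactly supported in $A$, one wants $\E_\e(U_\e,A)\le \E_\e(v,A)$. The key point is that $U_\e$, being a solution of \eqref{equa} that is monotone along the distance coordinate and takes values strictly between $-1$ and $1$, is a minimizer because the one-dimensional profile is the unique bounded increasing solution and the foliation of $\Hyp^n$ by the level sets of $\dise$ is a \emph{calibration-type} foliation: each leaf $\{\dise=c\}$ is a hypersurface of constant mean curvature, and the standard sliding/foliation argument (as in Caffarelli–Córdoba or the arguments behind \cite{M}, \cite{SO}) shows that a monotone solution whose level sets foliate space is a local minimizer. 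Concretely, one truncates $v$ between $-1$ and $1$ (which decreases $\E_\e$ by (iv)), slides the graph of $h_\e$ in the $t$-variable to obtain a family $U_\e^s(x)=h_\e(\dise(x)+s)$ of solutions sweeping out all values, and compares $v$ with the member of this family touching it, using the maximum principle along each trajectory; alternatively one invokes the general principle that an entire monotone solution with $\pm 1$ limits, ordered with respect to all its translates along a Killing-type flow, is automatically a local minimizer. I expect this minimality step to be the main obstacle, since the "foliation" here is by equidistants rather than geodesics and one must be careful that the sliding vector field $\nabla_g \dise$ generates a flow preserving the structure of \eqref{equa} only up to the $t$-dependence of the mean curvature $(n-1)\tanh t$; the monotonicity of $\tanh$ is what makes the comparison work.

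Finally, for the limit $\e\to 0$: rescale in the normal variable, $h_\e(t)=g_\e(t/\e)$ heuristically, or more robustly use that $h_\e$ is increasing with $0\le h_\e<1$ and extract, via monotonicity in $\e$ (again by comparison in the ODE, smaller $\e$ meaning stronger forcing toward $\pm1$) and Helly's theorem, a pointwise limit $h_0(t)$ which is monotone, equals $0$ at $0$, and satisfies $h_0\equiv \pm1$ for $\pm t>0$ — one shows the transition layer has width $O(\e)$ by a barrier argument comparing $h_\e$ with the Euclidean one-dimensional profile $\bar h(t/\e)$ (which satisfies $\bar h'' + f_\e(\bar h)=0$) on one side and using the sign of $(n-1)\tanh(t)h_\e'$ to control the perturbation. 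Hence $U_\e(x)=h_\e(\dise(x))\to \mathrm{sgn}(\dise(x))$ pointwise, and since $h_\e$ is monotone in $\e$ with continuous limit away from $\{t=0\}$, Dini's theorem upgrades this to locally uniform convergence on $\Hyp^n\setminus\Sigma$.
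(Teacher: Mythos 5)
Your ODE reduction, energy functional with weight $\cosh^{n-1}t$, and the overall variational strategy for producing the odd increasing profile $h_\e$ are all essentially the same as in the paper (the paper minimizes on the half-line $\R_+$ with the constraint $h(0)=0$ and then odd-reflects, rather than minimizing on $[-R,R]$ and letting $R\to\infty$, but these are minor variations of the same idea). Your sliding argument for local minimality, however, takes a genuinely different route from the paper's. The paper works in the half-space model with $\Sigma=\{x_1=0\}$ and slides by Euclidean translations $U_{\e,\tau}(\cdot)=U_\e(\cdot+\tau e_1)$; since \eqref{equa2} is invariant under these translations the slid functions are honest \emph{solutions} and the strong maximum principle finishes the argument. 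You instead slide in the normal coordinate, $U_\e^s(x)=h_\e(\dise(x)+s)$, which are not solutions; but as you correctly identify, $\Delta_g U_\e^s+f_\e(U_\e^s)=(n-1)h_\e'(\dise+s)\bigl(\tanh\dise-\tanh(\dise+s)\bigr)$, so the strict monotonicity of $\tanh$ together with $h_\e'>0$ makes $U_\e^s$ a strict supersolution for $s>0$ and a strict subsolution for $s<0$, and the sliding comparison still closes. This is a perfectly valid alternative and arguably more geometric, since it stays intrinsic to $\dise$ rather than choosing a convenient model of $\Hyp^n$.

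However, there is a genuine gap: you never address the \emph{uniqueness} of the solution $U_\e$, which is part of the statement. Constructing $h_\e$ via the ODE and pasting it along level sets of $\dise$ produces one solution, and uniqueness of the ODE profile (the content of Proposition \ref{nac} in the paper) would tell you there is at most one \emph{one-dimensional} solution vanishing on $\Sigma$. But the theorem asserts uniqueness among \emph{all} $C^2(\Hyp^n)$ solutions with $U_\e\equiv0$ on $\Sigma$ and the prescribed limits at $C^{\pm}$, and nothing in your argument rules out a non-one-dimensional solution. The paper handles this by citing the symmetry result \cite[Theorem 3.5]{BM}, which shows that any bounded solution with these asymptotic boundary conditions must in fact be of the form $k_\e(\dise(\cdot,\Sigma))$; combined with $k_\e(0)=0$ and the ODE uniqueness, this yields the claimed PDE uniqueness. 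Without invoking such a rigidity/symmetry theorem (or proving one), your proposal does not establish uniqueness. A smaller issue: you say $f_\e$ ``has the sign of $-s$ on $(-1,1)$''; by \eqref{ass}(iv) it has the sign of $s$ there (which is what you actually want, since it makes $\pm 1$ attracting for the shooting/comparison).
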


In dimension two,  existence results for equation \eqref{equa}, with  the potential $W(u)=\frac14 (1-u^2)^2$,  have been largely exploited both in the physical and in the mathematical community, because of its relevance  in the study of the Yang-Mills equations in four dimension. An explicit solution for $\e = 1$ with two point singularities at the boundary has been found in \cite{DFF},  while more   general solutions with two points singularities  have been constructed in \cite{R} and \cite{MZ}, using ODE techniques. A one-dimensional  solution in $\Hyp^n$ for any $n$ has been constructed only very recently in \cite{BM}. The novelty of our result consists in the existence and uniqueness  property for  solutions   vanishing  on $\Sigma$. Clearly, as $\e \to 0 $ the hypersurface $\Sigma$ turns out to be the jump set of the limit function $u^*(x) = \text{sgn}(\tilde d(x,\Sigma))$, thus a totally geodesic and area minimizing hypersurface. As will be clarified  below, the property $U_\e=0$ on $\Sigma$ will be crucial  in order to control the zero level set  of solutions $u_\e$ for general boundary data, and therefore to prescribe the boundary $L$ at infinity of the limiting minimal surface obtained as $\e\to 0$. 
\par
Now we pass to the case of general boundary data, namely to the case of arbitrary open sets $\Om^\pm\subset \sinf$.  In a two dimensional context, a model case is when $\Om^+$ and $\Om^-$ consist in a finite number of arcs. 
The corresponding solutions, usually referred to as {\it multimeron solutions} of the Yang-Mills equations, are solutions of \eqref{equa} with finitely many boundary singularities. They have been conjectured and formally derived  in \cite{W} and \cite{GJ}, and rigorously constructed in \cite{JMZH}, \cite{CGS} and \cite{BPZ}. 
The first  existence result for entire solutions in $\Hyp^n$ with general prescribed behavior on the  sphere at infinity  $\sinf$ is the following.  
\begin{theorem}\label{mainthm}
Let $\Om^+$ and $\Om^-$ be disjoint open subsets of $\sinf$. 
Then, there exists an entire solution $u_\e\in C^2(\Hyp^n)\cap C^0(\Hyp^n\cup\Om^+\cup\Om^-)$ to equation \eqref{equa}, that is a local minimizer of the energy $\E_\e$ in \eqref{enfun} according with Definition \ref{local}, and that satisfies the boundary conditions $u_\e = 1$ on $\Om^+$, $u_\e = -1$ on $\Om^-$. Moreover, the zero level set $\Sigma_\e:= u_\e^{-1}(0)$ satisfies $\Sigma_\e \subseteq \overline{\text{ conv} (F)}$, where $F=\sinf\setminus(\Om^+\cup\Om^-)$.
In addition, $\partial \Om^+ \cap \partial \Om^- \subseteq \overline{\Sigma_\e} \cap \sinf \subset F$ (where  the closure is understood in $\overline{B_1}$ with respect to to the Euclidean topology). In particular, if $\partial \Om^+=\partial\Om^- = F$ then $\overline{ \Sigma_\e} \cap \sinf = F$.
Finally, for $n\le 7$ there exists $\e_0>0$ depending only on $n$, such that for  $\e\le \e_0$ the zero level set $\Sigma_\e$ is a $C^2$ smooth hypersurface.
\end{theorem}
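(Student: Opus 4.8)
The plan is to obtain $u_\e$ as a limit of minimisers of $\E_\e$ on an exhausting sequence of balls, using the one--dimensional solutions of Theorem~\ref{odsthm} as two competing families of barriers. Fix $\e>0$, let $o$ be the centre of the ball model and $B_R=B_R(o)$ the hyperbolic geodesic balls. For every open spherical cap $C\subseteq\Om^+$ let $U_\e^C=h_\e(\dise(\cdot,\Sigma(\partial C)))$ be the solution of Theorem~\ref{odsthm} with $U_\e^C\to+1$ on $C$ (so $\dise\to+\infty$ towards $C$), and for every open cap $D\subseteq\Om^-$ set $V_\e^D:=-U_\e^{D}$, which by the symmetry $W(-t)=W(t)$ in \eqref{ass} is again a local minimiser solving \eqref{equa}, with $V_\e^D\to-1$ on $D$ and $V_\e^D\to+1$ on $\sinf\setminus\overline D\supseteq\Om^+$; put
\[
\underline\Phi_\e:=\sup_{C\subseteq\Om^+}U_\e^C,\qquad\overline\Phi_\e:=\inf_{D\subseteq\Om^-}V_\e^D .
\]
A preliminary point is the inequality $\underline\Phi_\e\le\overline\Phi_\e$, i.e.\ $U_\e^C\le V_\e^D$ for all admissible $C,D$. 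By a limiting argument one reduces to disjoint \emph{closed} caps $\overline C\subseteq\Om^+$, $\overline D\subseteq\Om^-$; then the totally geodesic hyperplanes $\Sigma(\partial C)$ and $\Sigma(\partial D)$ are disjoint and each lies strictly on the far side of the other, so any path joining a point $x$ on the $C$--side of $\Sigma(\partial C)$ to $\Sigma(\partial D)$ must cross $\Sigma(\partial C)$, whence $\dise(x,\Sigma(\partial C))\le-\dise(x,\Sigma(\partial D))$ at every $x$; since $h_\e$ is odd and increasing this gives $U_\e^C\le V_\e^D$ (in the region lying between the two hyperplanes the inequality is trivial, $U_\e^C<0<V_\e^D$).

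For each $k$ let $u_{\e,k}$ minimise $\E_\e(\,\cdot\,,B_k)$ among $v\in H^1(B_k)$ agreeing on $\partial B_k$ with a fixed continuous function $g_k$ chosen so that $\underline\Phi_\e\le g_k\le\overline\Phi_\e$ on $\partial B_k$; minimisers exist by the direct method and may be assumed to satisfy $|u_{\e,k}|\le1$ after truncation (using \eqref{ass}), and they solve \eqref{equa} and are $C^{2,\alpha}$ inside by elliptic regularity. Because $\E_\e$ is not convex one cannot apply a maximum principle to \eqref{equa}; one uses instead a comparison principle \emph{for minimisers}: if $u,v$ minimise $\E_\e(\,\cdot\,,\Omega)$ on a connected bounded $\Omega$ with $u\le v$ on $\partial\Omega$, the lattice identity $\E_\e(\min(u,v),\Omega)+\E_\e(\max(u,v),\Omega)=\E_\e(u,\Omega)+\E_\e(v,\Omega)$ forces $\min(u,v)$ to be again a minimiser with the boundary data of $u$; hence $\min(u,v)$ and $u$ both solve \eqref{equa}, their nonnegative difference satisfies a linear elliptic equation and vanishes near $\partial\Omega$, so it vanishes identically by unique continuation, i.e.\ $u\le v$. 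Comparing $u_{\e,k}$ on $B_k$ with the restrictions of the local minimisers $U_\e^C$ (from below) and $V_\e^D$ (from above) yields $\underline\Phi_\e\le u_{\e,k}\le\overline\Phi_\e$ in $B_k$, uniformly in $k$. Interior elliptic estimates then give, along a subsequence, $u_{\e,k}\to u_\e$ in $C^2_{loc}(\Hyp^n)$ with $u_\e\in C^2(\Hyp^n)$ solving \eqref{equa}, $|u_\e|\le1$ and $\underline\Phi_\e\le u_\e\le\overline\Phi_\e$; and $u_\e$ is a local minimiser by the standard argument, comparing $u_{\e,k}$ on a ball $B_k\supset\supset A$ with $v+(u_{\e,k}-u_\e)$ for any admissible $v$ and letting $k\to\infty$ via the strong $H^1_{loc}$ convergence.

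The remaining assertions are read off from the barriers. For $\xi\in\Om^+$ pick a cap $C\ni\xi$ with $\overline C\subseteq\Om^+$: then $u_\e\ge U_\e^C=h_\e(\dise(\cdot,\Sigma(\partial C)))\to1$ as $x\to\xi$ while $u_\e\le1$, so $u_\e\to1$ at $\xi$; symmetrically $u_\e\to-1$ on $\Om^-$, hence $u_\e\in C^0(\Hyp^n\cup\Om^+\cup\Om^-)$ with the prescribed values. For the zero set, suppose $x\notin\overline{\mathrm{conv}(F)}$: some totally geodesic hyperplane separates $x$ from $\mathrm{conv}(F)$, so $x$ lies in an open half--space $H$ whose boundary cap $C_0=\overline H\cap\sinf$ is disjoint from $F$; since $C_0$ is connected and $\sinf\setminus F=\Om^+\cup\Om^-$, either $C_0\subseteq\Om^+$ or $C_0\subseteq\Om^-$. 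In the first case $u_\e\ge U_\e^{C_0}=h_\e(\dise(\cdot,\Sigma(\partial C_0)))>0$ throughout $H$ (since $\dise>0$ there and $h_\e$ is odd increasing), in the second $u_\e\le V_\e^{C_0}<0$ in $H$; either way $u_\e(x)\ne0$, so $\Sigma_\e\subseteq\overline{\mathrm{conv}(F)}$. Near any point of $\Om^\pm$ the function $u_\e$ is close to $\pm1$, whence $\overline{\Sigma_\e}\cap\sinf\subseteq F$; and if $\zeta\in\partial\Om^+\cap\partial\Om^-$, every neighbourhood of $\zeta$ meets both $\Om^+$ and $\Om^-$, so it contains points with $u_\e>\tfrac12$ and points with $u_\e<-\tfrac12$ and, by connectedness, a zero of $u_\e$; thus $\partial\Om^+\cap\partial\Om^-\subseteq\overline{\Sigma_\e}\cap\sinf\subseteq F$, with equality throughout when $\partial\Om^+=\partial\Om^-=F$.

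Finally, for $n\le7$ and $\e$ small the zero set is smooth. Since $u_\e\in C^{2,\alpha}$, it suffices to show $\nabla u_\e\ne0$ on $\{u_\e=0\}$ for $\e\le\e_0(n)$, for then $\Sigma_\e$ is a $C^2$ hypersurface by the implicit function theorem. If this fails there are $\e_j\downarrow0$ and $x_j\in\Sigma_{\e_j}$ with $\nabla u_{\e_j}(x_j)=0$; rescale at scale $\e_j$ in geodesic normal coordinates centred at $x_j$, $\tilde u_j(y):=u_{\e_j}(\exp_{x_j}(\e_j y))$, defined on all of $\R^n$ since $\Hyp^n$ is complete, with $|\tilde u_j|\le1$, $\tilde u_j(0)=0$, $\nabla\tilde u_j(0)=0$. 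By homogeneity of $\Hyp^n$ the pulled--back metrics flatten to the Euclidean one in $C^2_{loc}$ uniformly in $j$, so $\tilde u_j$ is a local minimiser of $\int[\tfrac12\langle a^{(j)}\nabla v,\nabla v\rangle+W(v)]\sqrt{\det a^{(j)}}\,dy$ with $a^{(j)}\to I$; interior estimates give $\tilde u_j\to v$ in $C^2_{loc}(\R^n)$, and $v$ is an entire local minimiser of the Euclidean functional $\int\tfrac12|\nabla v|^2+W(v)$ with $|v|\le1$, $v(0)=0$, $\nabla v(0)=0$, hence nonconstant (as $0\notin\{-1,1\}$). By the classification of entire local minimisers in $\R^n$ for $n\le7$ (\cite{SO}), $v$ is one--dimensional, $v(y)=h(y\cdot\nu-c)$ with $h'>0$, so $\nabla v(0)=h'(-c)\,\nu\ne0$ --- a contradiction; and since the argument uses only the bound $|u_\e|\le1$ and the local minimality of $u_\e$, the threshold $\e_0$ depends on $n$ alone. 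The crux of the whole construction is this comparison step: replacing the unavailable maximum principle for \eqref{equa} by comparison of energy minimisers, and coupling it with the hyperbolic estimate $\underline\Phi_\e\le\overline\Phi_\e$, so that the limiting $u_\e$ is forced between barriers carrying the correct trace at infinity and with zero set confined to $\overline{\mathrm{conv}(F)}$.
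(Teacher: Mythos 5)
Your construction parallels the paper's almost step by step (same barriers, same exhaustion, same blow-up for smoothness, same topological argument for $\partial\Om^+\cap\partial\Om^-$), but the ``comparison principle for minimisers'' you state and use to obtain $\underline\Phi_\e\le u_{\e,k}\le\overline\Phi_\e$ is false as a general fact, and the argument you give for it has a genuine gap.

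The lattice identity does show that $\min(u,v)$ minimises $\E_\e(\cdot,\Omega)$ with the boundary data of $u$, and hence that $u$ and $\min(u,v)$ are two solutions of \eqref{equa} with $u-\min(u,v)\geq0$ vanishing \emph{on} $\partial\Omega$. But it does not vanish in a \emph{neighbourhood} of $\partial\Omega$, so unique continuation is not applicable; all the strong maximum principle can say is that either $u\equiv\min(u,v)$ or $u>\min(u,v)$ throughout $\Omega$, and the second alternative cannot be excluded in general. Indeed for the functional at hand the Dirichlet problem can have several minimisers with the same data (e.g.\ $W=\tfrac14(1-t^2)^2$ on a long interval with zero boundary data admits the two symmetric bumps $\pm u_0$ as minimisers), and if your comparison principle held one could take $u=v$ on $\partial\Omega$ in both orders and deduce uniqueness, a contradiction.

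What rescues the conclusion in the specific situation of the theorem is not a general comparison principle for minimisers but the special monotone structure of the one--dimensional barriers $U_\e^C$: they are strictly increasing along the pencil of geodesics orthogonal to $\Sigma(\partial C)$, their translates along that pencil form a strictly ordered continuous family with $U_{\e,\tau}\to\pm1$ uniformly as $\tau\to\pm\infty$, and the equation is invariant under the corresponding isometries. This is exactly the input for the sliding argument in the proof of Theorem~\ref{odsthm}: starting from $\tau$ so large that $U_{\e,\tau}<u_{\e,k}$ on $\overline{B_k}$ (using $|u_{\e,k}|<1$), one slides up to the extremal $\tau$; if that extremal $\tau$ were $<0$ the first touching point would be interior (since on $\partial B_k$ the ordering is strict for $\tau<0$), contradicting the strong maximum principle for the difference of two solutions of \eqref{equa}. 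This gives $U_\e^C\le u_{\e,k}$, and similarly on the other side, which is what Proposition~\ref{proprk} in the paper proves and what your argument is missing. (Alternatively, you may combine your lattice step with the uniqueness statement of Theorem~\ref{odsthm}: $\min(u_{\e,k},U_\e^C)$ is a bounded solution with boundary data $U_\e^C$, hence equals $U_\e^C$ by that uniqueness result, which is itself proved by sliding.) Once this step is replaced, the remainder of your proof---passage to the limit in $k$, local minimality of the limit, boundary continuity, the separating--hyperplane argument for $\Sigma_\e\subset\overline{\mathrm{conv}(F)}$, the path argument for $\partial\Om^+\cap\partial\Om^-\subseteq\overline{\Sigma_\e}$, and the blow-up contradiction via Savin's theorem for $n\le7$---follows the paper's line and is sound.
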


To prove Theorem \ref{mainthm}, the key point is to prescribe the boundary conditions at infinity. To this purpose the main ingredient 
is the construction of suitable barriers $\overline \psi_\e$ and $\underline \psi_\e$ (so that $\underline \psi_\e\le u_\e \le \overline \psi_\e$) with desired behavior at infinity, obtained combining the one dimensional  solutions discussed above. It turns out that the location  of the zero level set $\Sigma_\e$ is controlled by the barriers, so that it is trapped into $\overline{\text{ conv} (F)}$, the geodesic convex hull of $F$ in $\Hyp^n \cup \sinf$. On the other hand, smoothness of $\Sigma_\e$ in low dimension is indeed a consequence through blow-up analysis of the recent important paper \cite{SO}. This existence result combines ideas from \cite{CGS} and \cite{JMZH}, and gives a positive answer to the question, raised in \cite{BM},  of constructing entire solutions to equation \eqref{equa}, taking values in $\{-1,0,+1\}$ on prescribed sets of the sphere at infinity $\sinf$.   
\par
Our next result deals with solutions exhibiting a prescribed  sharp interface $L\subset \sinf$.  To this purpose, given $B\subset \sinf$,  we denote by  $K(B)$ the cone over $B$ from the origin in the Poincar\'e ball model, defined by 
$K(B):= \cup_{\rho < 1} \rho B \cup \{0\}$.  Moreover, $\dise(x, K(L))$ will denote now the hyperbolic signed distance function from $K(L)$, taking positive sign in $K(\Om^+)$ and negative sign in $K(\Om^-)$. 

\begin{theorem}\label{ab}
Let $\Om^+$ and $\Om^-$ be disjoint open subsets of $\sinf$ with common boundary $L$, and assume that $L\subset \sinf$  is a smooth hypersurface of class $C^1$.
\par
Then, there exists an entire solution $u_\e\in C^2(\Hyp^n)\cap C^0(\Hyp^n\cup\sinf\setminus L)$ to equation \eqref{equa}, that satisfies the boundary conditions $u_\e = 1$ on $\Om^+$, $u_\e = -1$ on $\Om^-$,   that is a local minimizer of the energy $\E_\e$ in \eqref{enfun}, and having the following asymptotic behavior near the sphere at infinity $\sinf$
\begin{equation}\label{abe}
u_\e(x) = h_\e(\dise(x,K(L))) + e(x), \qquad \text{ where } e(x)\to 0 \text{ as } x\to \sinf. 
\end{equation}
Moreover, the zero level set $\Sigma_\e:= u_\e^{-1}(0)$ satisfies $\Sigma_\e \subset \overline{\text{ conv} (L)}$. Finally, $\Sigma_\e$   is a $C^1$   hypersurface near the sphere at infinity with boundary $\partial \Sigma_\e = L$,  touching $\sinf$  orthogonally along $L$. 
\end{theorem}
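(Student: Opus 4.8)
The plan is to revisit the construction behind Theorem~\ref{mainthm} and to sharpen its barriers near the sphere at infinity. Since $\Om^+$ and $\Om^-$ have common boundary $L$, we have $F=\sinf\setminus(\Om^+\cup\Om^-)=L$, so Theorem~\ref{mainthm} already furnishes an entire local minimizer $u_\e\in C^2(\Hyp^n)\cap C^0(\Hyp^n\cup\Om^+\cup\Om^-)$ with $u_\e=\pm1$ on $\Om^\pm$, with $\Sigma_\e:=u_\e^{-1}(0)\subset\overline{\text{conv}(L)}$ (one of the conclusions of Theorem~\ref{ab}) and $\overline{\Sigma_\e}\cap\sinf=L$. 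What remains genuinely new is the asymptotic expansion~\eqref{abe} and the $C^1$-regularity and orthogonal contact of $\Sigma_\e$ along $L$; both will follow once $u_\e$ is controlled near $\sinf$ by the one-dimensional profile $h_\e$ of Theorem~\ref{odsthm} transported along the cone $K(L)$, and the $C^1$-regularity of $L$ is precisely what makes this control possible.

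For the barriers, to each $p\in L$ associate the affine tangent hyperplane $T_p\subset\R^n$ of $L$ at $p$, the round sub-sphere $L_p:=\sinf\cap T_p$ it cuts out, and the totally geodesic hypersurface $\Sigma_p:=\Sigma(L_p)$ asymptotic to $L_p$, oriented so that its positive side faces $\Om^+$. By Theorem~\ref{odsthm} and the homogeneity of $\Hyp^n$ (all totally geodesic hypersurfaces are isometric, while equation, energy and signed distance are isometry invariant), $x\mapsto h_\e(\dise(x,\Sigma_p))$ is an exact solution of~\eqref{equa} for every $p$, with the same universal odd increasing profile $h_\e$; consequently every locally finite infimum of such solutions is a supersolution and every supremum a subsolution, and translating the argument of $h_\e$ yields, in addition, strict super- and sub-solutions. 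Using $L\in C^1$ one checks that, for an appropriate choice $p(x)\in L$, $\dise(x,\Sigma_{p(x)})-\dise(x,K(L))\to0$ as $x\to\sinf$: near $\sinf$ both $K(L)$ and the $\Sigma_p$ meet $\sinf$ orthogonally, along $L$ and $L_p$ respectively, and $L$, $L_p$ are tangent at $p$, so these surfaces are $C^1$-close there; the same computation shows that $K(L)$ is asymptotically totally geodesic, its hyperbolic mean curvature being the Euclidean one multiplied by a conformal factor vanishing at $\sinf$. Taking suitable infima and suprema over $p\in L$ of the $h_\e(\dise(\cdot,\Sigma_p))$ and their small translates produces, on a neighbourhood of $\sinf$, a supersolution $\overline\Psi_\e^{\eta}$ and a subsolution $\underline\Psi_\e^{\eta}\le\overline\Psi_\e^{\eta}$, both equal to $h_\e(\dise(\cdot,K(L)))+O(\eta)$ there and ordered with the boundary data $\pm1$ on $\Om^\pm$; inserting these into the exhaustion-and-comparison scheme of the proof of Theorem~\ref{mainthm} — with $\min(u_\e,\overline\Psi_\e^{\eta})$ and $\max(u_\e,\underline\Psi_\e^{\eta})$ as admissible competitors, the interior matching being handled as there through the comparison principle and the global barriers already constructed — gives $\underline\Psi_\e^{\eta}\le u_\e\le\overline\Psi_\e^{\eta}$ near $\sinf$, hence $|u_\e(x)-h_\e(\dise(x,K(L)))|\to0$ as $x\to\sinf$ upon letting $\eta\to0$, which is~\eqref{abe}. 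I expect this barrier step to be the main obstacle: verifying the asymptotic minimality of $K(L)$, quantifying through the $C^1$ modulus of $L$ how well $\dise(\cdot,K(L))$ is approximated by the $\dise(\cdot,\Sigma_p)$, and making the crude interior control inherited from Theorem~\ref{mainthm} dovetail with the sharp control near $\sinf$.

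Finally, for the boundary regularity of $\Sigma_\e$, fix $q\in L$ and compare $u_\e$, in a neighbourhood of $q$ inside $\overline{B_1}$, with the smooth exact solution $h_\e(\dise(\cdot,\Sigma_q))$: by~\eqref{abe} together with $\dise(\cdot,\Sigma_q)-\dise(\cdot,K(L))\to0$ near $q$, the difference $e_q:=u_\e-h_\e(\dise(\cdot,\Sigma_q))$ is $C^0$-small there and solves a linear second order equation with smooth coefficients and vanishing right-hand side; by interior elliptic estimates (using the bounded geometry of $\Hyp^n$), $e_q$ and $\nabla e_q$ tend to $0$ as $x\to\sinf$ near $q$, so $u_\e\to h_\e(\dise(\cdot,\Sigma_q))$ in $C^1_{loc}$ there. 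Since $\nabla h_\e(\dise(\cdot,\Sigma_q))=h_\e'(\dise)\nabla\dise\ne0$ on $\Sigma_q=\{h_\e(\dise(\cdot,\Sigma_q))=0\}$, it follows that $\nabla u_\e\ne0$ near $\Sigma_\e$ close to $q$, so by the implicit function theorem $\Sigma_\e$ is there a $C^1$ hypersurface, $C^1$-close to $\Sigma_q$ and hence to $K(L)$. Letting $q$ range over $L$, $\Sigma_\e$ is a $C^1$ hypersurface in a neighbourhood of $\sinf$, a $C^1$-small graph over $K(L)$; since $\overline{K(L)}\cap\sinf=L$ and $K(L)$, being ruled by Euclidean diameters of $B_1$ (which meet $S^{n-1}$ orthogonally), touches $\sinf$ orthogonally along $L$, the same holds for $\Sigma_\e$, giving $\partial\Sigma_\e=L$ with orthogonal contact. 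The merely $C^1$ regularity of $L$ is also what forces~\eqref{abe} to carry no rate and the regularity of $\Sigma_\e$ at infinity to be no better than $C^1$.
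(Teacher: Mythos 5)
The approach you propose is genuinely different from the paper's: you try to build sharpened barriers near $\sinf$ by taking infima and suprema of translated one-dimensional solutions $h_\e(\dise(\cdot,\Sigma_p)+\eta)$ over $p\in L$, where $\Sigma_p$ is the totally geodesic hypersurface asymptotic to the round sub-sphere $L_p$ tangent to $L$ at $p$. The paper instead proves the asymptotics and the boundary regularity through a blow-up argument (Proposition \ref{blowup}): rescaling around points $p_k\to p\in L$ and using compactness, the global barriers \eqref{barriers} (which are built from caps \emph{contained in} $\Om^\pm$), and uniqueness of one-dimensional solutions, one shows that the rescaled solutions converge in $C^2_{\rm loc}$ to the one-dimensional profile associated with the tangent hyperplane. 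Your third paragraph (comparison with $h_\e(\dise(\cdot,\Sigma_q))$, linearized equation for the difference, elliptic estimates, implicit function theorem, orthogonality of the ruled cone $K(L)$) is a reasonable and essentially correct way to convert asymptotic control of $u_\e$ into $C^1$ boundary regularity, and is close in spirit to the paper's use of \eqref{blowupconv2}.

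However, the barrier step, which you correctly flag as the main obstacle, has a genuine gap that cannot be repaired without essentially switching to the paper's strategy. The infimum $\overline\Psi_\e^\eta(x)=\inf_{p\in L} h_\e(\dise(x,\Sigma_p)+\eta)$ is indeed a supersolution (translation by $\eta>0$ makes each term a strict supersolution, and infima of supersolutions are supersolutions), but it is \emph{not} an upper barrier for $u_\e$, nor is it close to $h_\e(\dise(\cdot,K(L)))$. The problem is that the tangent sub-sphere $L_p$ is a \emph{great} sub-sphere tangent to $L$ at $p$, not a sphere contained in $\Om^-$. Unless $L$ is convex in the spherical sense, for $p$ far from the projection $\pi(x)$ of $x$ the sub-sphere $L_p$ cuts across $\Om^+$, so part of $\Om^+$ lies on the negative side of $\Sigma_p$. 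At such points $\dise(x,\Sigma_p)<0$ while $\dise(x,K(L))>0$, so the term corresponding to that $p$ drags the infimum well below $h_\e(\dise(x,K(L)))$, destroying the claimed $O(\eta)$ closeness. For the same reason the comparison principle cannot produce $u_\e\le h_\e(\dise(\cdot,\Sigma_p)+\eta)$: the two functions disagree at the boundary at infinity on the portion of $\Om^+$ on the wrong side of $L_p$, where $u_\e\to 1$ while $h_\e(\dise(\cdot,\Sigma_p)+\eta)\to -1$. By contrast, the paper's $\overline\psi_\e$ in \eqref{barriers} is an infimum of profiles $U_\e^{C^+,C^-}$ over caps $C^-\subset\Om^-$; these satisfy $\Om^+\subset C^+$, so $u_\e\le U_\e^{C^+,C^-}$ holds globally via the sliding argument, and the blow-up of Proposition \ref{blowup} recovers the tangent-plane profile only in the rescaled local limit, where the problematic contributions from distant $p$ disappear. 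If you want to keep a purely barrier-based proof, you would have to replace tangent great sub-spheres by \emph{osculating} sub-spheres contained in $\Om^-$ (resp.\ $\Om^+$), but those require $L\in C^2$ with controlled curvature, and even then the matching with the interior barrier on an inner annulus must be done carefully; the paper's compactness/uniqueness argument circumvents both of these difficulties.
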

The asymptotic expansion in \eqref{abe}  generalizes the analogous  property established in \cite{CGS} for solutions near isolated singularities in dimension  two (with $\e=1$ and the explicit potential $W(u)=\frac14(1-u^2)^2$).
Indeed, as we will see  in Proposition \ref{blowup},   blowing up  the solution $u_\e$ 
around a point of $L$  the sets $\Om^\pm$ converge (under rescaling) to a pair of half spheres, while 
$u_\e$ converges to the corresponding one dimensional solution given by Theorem \ref{odsthm}, and this will be the key step in proving \eqref{abe}.
\par
The orthogonality of the zero level set of the minimizers $u_\e$ at the boundary, stated in Theorem \ref{ab},  means that the normals $\nu_{\Sigma_\e}(P_k)$ at points  $P_k\in \Sigma_\e$ converging to some  $P_\infty \in L\subset \sinf$ as $k \to \infty$,  tend to $\nu_L(P_\infty)$ (see \eqref{orto}).  This orthogonality property
can be seen as the natural  counterpart, in this phase field framework, of the boundary orthogonality proved in \cite{HL} for entire minimal hypersurfaces constructed in  \cite{A1}. For the reader convenience we quote  this last result in Theorem \ref{micorre}.  In fact, in \cite{HL} the authors actually give a complete boundary regularity result, that could be interesting to exploit in our context, in order to obtain higher regularity of $\Sigma_\e$ when  $L$ is more regular than $C^1$. To this purpose, it seems very natural to investigate the asymptotic behavior of the solutions $u_\e$ near the sphere at infinity, either through a PDE approach or through an asymptotic energy expansion based on $\Gamma$-convergence, but  we will not pursue further  this point in the paper.
\par
Let us pass now to describe the second step of our program, consisting in letting $\e\to0$, recovering in the limit sharp area minimizing interfaces in hyperbolic space. 
 The language of $\Gamma$-convergence, as shown in \cite{M} in the Euclidean case, provides the natural framework to perform this asymptotic analysis (we refer the reader  to the book \cite{DM} for an extensive introduction to the subject). 
\par
Since we are interested in minimal hypersurfaces in hyperbolic space with infinite area,
 in order to perform our variational approach based on $\Gamma$-convergence it is convenient to restrict the energy functionals  to  bounded domains of $\Hyp^n$. More precisely, we identify $\Hyp^n$ with $B_1$ according with the Poincar\'e ball model, and  we  restrict the energy functionals $\E_\e(u)$ defined in \eqref{enfun} to balls $B_R$ with $0<R<1$. Moreover,   we  fix a boundary condition $u\equiv w_\e$
 on $\partial B_R$, where $w_\e$ belongs to $H^1_{loc}(B_1)$ with $|w_\e|\le 1$,  having in mind $w_\e = u_\e$ for our purposes, where $u_\e$ is the local minimizer constructed in Theorem \ref{mainthm}.
 \par 
Let $\epsm\to 0$,
and  assume that (up to subsequences)
\begin{equation}\label{datalbo}
w_\epsm \to w^* \text{ in } L_{loc}^1(B_1), \quad
\mu_\epsm: = \epsm \big( \frac{1}{2} \|\nabla w_\epsm\|^2 + W_\epsm(w_\epsm)\big) \, dVol_g \weakstar \mu^*,
\end{equation}
for some $w^*\in BV_{loc}(B_1;\{+1,-1\})$ and for some locally finite positive  measure $\mu^*$ on $B_1$.
\par
We are in a position to define 
 the energy functionals $\F_\e(\,\cdot\,; w_\e, B_R):L^1(B_R)\to \R$ as follows
\begin{equation}\label{feps}
\F_\e(u; w_\e, B_R):= 
\begin{cases}
\sqrt{2} \e \,  \E_\e(u,B_R) & \text{ if } u\in H_{ w_\e}^1(B_R),\\
\infty & \text{ otherwise in }  L^1(B_R),
\end{cases}
\end{equation}
where $H_{ w_\e}^1(B_R)$ is the set of $H^1$ functions with trace on $\partial B_R$ equal to $w_\e$. 
Note that $\F_\e$ is lower semicontinuous even if it could be infinite on some $u \in H_{ w_\e}^1(B_R)$, because we impose no growth condition on $W_\e$ at infinity.

Given $v\in L^1(B_R)$,  denote by $\tilde v= \tilde{v}_{{w^*}}$ the extension of $v$ to $B_1$, coinciding with $w^*$ on $B_1\setminus B_R$.
The candidate $\Gamma$-limit of the functionals $\F_\e$ is the functional $\F(\,\cdot\,;w^*,B_R):L^1(B_R)\to \R$ defined as
\begin{equation}\label{gali}
\F(v;w^*,B_R):= 
\begin{cases}
C_W  |\tilde  v_{w^*}|_{BV_g(\overline B_R)} & \text{ if } v \in BV(B_R;\{+1,-1\});\\
+\infty & \text{ otherwise in } L^1(B_R),
\end{cases}
\end{equation}
where 
$$
C_W = \int_{-1}^{1} \sqrt{W(s)} \, ds,
$$
and $|\cdot |_{BV_g}$ denotes the intrinsic total variation in the hyperbolic space (see Section \ref{preliminary} for the precise definition). 
\par
The following result describes the asymptotic behavior of the energy functionals $\F_\e$ as $\e\to 0$. 
\begin{theorem}\label{gcthm}
Let $\epsm\to 0$, and let  $w_\epsm$ be a sequence of boundary conditions  satisfying  \eqref{datalbo} for some suitable $w^*$ and  $\mu^*$.  The following compactness and $\Gamma$-convergence result holds.
\begin{itemize}
\item[i)] {\rm (Compactness.)} Let $0<R<1$ be fixed, and let $v_\epsm$ be a sequence in $L^1(B_R)$ with $|v_\epsm| \le 1$ such that  $\F_\epsm(v_\epsm;w_\epsm,B_R)\le C$, for some constant $C$ independent of $\epsm$. 
Then  (up to a subsequence)  $v_\epsm\to v^*$ in $L^1(B_R)$ for  some  $v^* \in BV(B_R;\{+1,-1\})$.
\item[ii)] {\rm ($\Gamma$-convergence.)} 
Let $0<R<1$ be such that $\mu^*(\partial B_R)=0$. Then the following  $\Gamma$-convergence inequalities hold.
\begin{itemize}
\item[i)] {(\rm  $\Gamma$-liminf inequality.)} Let $v_\epsm \to v$  in $L^1(B_R)$. Then we have $\F(v; w^*,B_R)\le \liminf_\epsm \F_\epsm(v_\epsm;w_\epsm,B_R)$;
\item[ii)] {(\rm  $\Gamma$-limsup inequality.)} Let $v\in L^1(B_R)$. Then  there exists a sequence $v_\epsm\to v$  in $L^1(B_R)$ such that $\F(v;w^*,B_R)\ge \limsup_\epsm \F_\epsm(v_\epsm;w_\epsm,B_R)$. 
\end{itemize}
\end{itemize}
\end{theorem}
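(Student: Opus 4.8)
The plan is to reduce the statement to the classical Modica--Mortola analysis \cite{M}, treating the Dirichlet datum on $\partial B_R$ by a localization near that sphere. Fix once and for all $R<R'<1$. On the compact set $\overline{B}_{R'}$ the hyperbolic metric $g$ is uniformly comparable to the Euclidean one, so that $BV_g$ coincides with the ordinary $BV$ with equivalent norms, the embedding of $BV(B_R)$ into $L^1(B_R)$ is compact, and the coarea formula and the $BV$ chain rule are available intrinsically. I shall repeatedly use the function $\Phi(t):=\int_{-1}^{t}\sqrt{W(s)}\,ds$, which by \eqref{ass} is an increasing homeomorphism of $[-1,1]$ onto $[0,C_W]$, together with the pointwise inequality (Young's inequality and the chain rule)
\[
\sqrt{2}\,\e\Big(\tfrac12\|\nabla_g u\|^2+W_\e(u)\Big)\ \ge\ 2\,\|\nabla_g u\|\,\sqrt{W(u)}\ =\ 2\,\|\nabla_g(\Phi\circ u)\|.
\]
Combined with the coarea formula and lower semicontinuity of the total variation, this gives, exactly as in \cite{M}, the \emph{interior} $\Gamma$--liminf inequality: whenever $z_\epsm\to z$ in $L^1(U)$ with $U$ open, $U\Subset B_1$, then $\liminf_\epsm\sqrt{2}\,\epsm\,\E_\epsm(z_\epsm,U)\ge 2\,|\Phi\circ z|_{BV_g(U)}$, the right--hand side being $+\infty$ unless $z\in BV(U;\{+1,-1\})$, in which case it equals $C_W\,|z|_{BV_g(U)}$ (the factor $2$ is absorbed because the jump of a $\{+1,-1\}$--valued function has size $2$).

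For compactness (i): if $\F_\epsm(v_\epsm;w_\epsm,B_R)\le C$, then $v_\epsm\in H^1_{w_\epsm}(B_R)$, the displayed bound gives $\|\nabla_g(\Phi\circ v_\epsm)\|_{L^1(B_R)}\le C/2$, and $|\Phi\circ v_\epsm|\le C_W$ because $|v_\epsm|\le1$; hence $\Phi\circ v_\epsm$ is bounded in $BV(B_R)$ and, along a subsequence, converges in $L^1(B_R)$ and a.e.\ to some $\zeta\in BV(B_R)$. Since also $\int_{B_R}W(v_\epsm)\,dVol_g\le \tfrac{\epsm}{\sqrt{2}}\,\F_\epsm(v_\epsm;w_\epsm,B_R)\to 0$, a further subsequence satisfies $W(v_\epsm)\to0$ a.e., which together with $|v_\epsm|\le1$ forces $v_\epsm^2\to1$ a.e.; as $\Phi$ is a homeomorphism, $v_\epsm=\Phi^{-1}(\Phi\circ v_\epsm)\to v^*:=\Phi^{-1}(\zeta)$ a.e., with $v^*\in\{+1,-1\}$ a.e. Then $\zeta=C_W\,\chi_{\{v^*=1\}}\in BV(B_R)$, so $\{v^*=1\}$ has finite perimeter, $v^*\in BV(B_R;\{+1,-1\})$, and dominated convergence upgrades $v_\epsm\to v^*$ to $L^1(B_R)$.

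For the $\Gamma$--liminf inequality (ii.i) one may assume $\liminf_\epsm\F_\epsm(v_\epsm;w_\epsm,B_R)<\infty$, so along a subsequence realizing this liminf $v_\epsm\in H^1_{w_\epsm}(B_R)$. Extend $v_\epsm$ to $\hat v_\epsm\in H^1(B_{R'})$ by $\hat v_\epsm:=w_\epsm$ on $B_{R'}\setminus B_R$; then $\hat v_\epsm\to\tilde v_{w^*}$ in $L^1(B_{R'})$ and
\[
\sqrt{2}\,\epsm\,\E_\epsm(\hat v_\epsm,B_{R'})=\F_\epsm(v_\epsm;w_\epsm,B_R)+\sqrt{2}\,\mu_\epsm\big(B_{R'}\setminus\overline{B}_R\big).
\]
By the interior inequality applied on $B_{R'}$, by monotonicity of the total variation ($|\tilde v_{w^*}|_{BV_g(B_{R'})}\ge|\tilde v_{w^*}|_{BV_g(\overline{B}_R)}$), and by $\limsup_\epsm\mu_\epsm(B_{R'}\setminus\overline{B}_R)\le\mu^*(\{x:R\le|x|\le R'\})$ (weak--$*$ convergence tested on a compact annulus), it follows that
\[
\liminf_\epsm\F_\epsm(v_\epsm;w_\epsm,B_R)\ \ge\ C_W\,|\tilde v_{w^*}|_{BV_g(\overline{B}_R)}-\sqrt{2}\,\mu^*\big(\{x:R\le|x|\le R'\}\big),
\]
and letting $R'\downarrow R$ and using $\mu^*(\partial B_R)=0$ concludes this part.

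For the $\Gamma$--limsup inequality (ii.ii) one may assume $\F(v;w^*,B_R)=C_W\,|\tilde v_{w^*}|_{BV_g(\overline{B}_R)}<\infty$, so $v\in BV(B_R;\{+1,-1\})$; since $w^*\in BV_{loc}(B_1;\{+1,-1\})$, one may also assume (for $R$ in a set of full measure, compatibly with $\mu^*(\partial B_R)=0$) that $w^*$ has a $\{+1,-1\}$--valued inner trace $g$ on $\partial B_R$, so that $C_W\,|\tilde v_{w^*}|_{BV_g(\overline{B}_R)}=C_W|v|_{BV_g(B_R)}+C_W\int_{\partial B_R}|\gamma v-g|\,d{\mathcal H}^{n-1}_g$, $\gamma v$ being the inner trace of $v$. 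I would first construct the classical recovery sequence for $\tilde v_{w^*}$: letting $\eta$ be the heteroclinic solution of $\eta''=W'(\eta)$, $\eta(\pm\infty)=\pm1$, and $d^{\pm}$ the signed $g$--distance to the jump set $\Sigma$ of $\tilde v_{w^*}$ (the interior jump set of $v$ together with $\{x\in\partial B_R:\gamma v(x)\neq g(x)\}$), set $v_\epsm^{0}:=\eta(d^{\pm}/\epsm)$ in a thin tube around $\Sigma$ and $v_\epsm^{0}:=\pm1$ outside, pushing the layer along the boundary part of $\Sigma$ to the $B_R$--side so that $v_\epsm^{0}\equiv g$ near $\partial B_R$ within $\overline{B}_R$; then $v_\epsm^{0}\to\tilde v_{w^*}$ in $L^1(B_{R'})$ and $\limsup_\epsm\sqrt{2}\,\epsm\,\E_\epsm(v_\epsm^{0},B_R)\le C_W\,|\tilde v_{w^*}|_{BV_g(\overline{B}_R)}$. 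Finally I would correct $v_\epsm^{0}$ to coincide with $w_\epsm$ on $\partial B_R$ by a linear interpolation between $v_\epsm^{0}$ and $w_\epsm$ in a thin shell $\{x:R-\delta<|x|<R\}$, within which both functions are close to the same well $g\in\{\pm1\}$; the extra energy so produced is bounded by $C\,\epsm\,\delta^{-2}\,\|v_\epsm^{0}-w_\epsm\|_{L^2(\{R-\delta<|x|<R\})}^{2}+\sqrt{2}\,\mu_\epsm(\{x:R-\delta<|x|<R\})$, which tends to $0$ first as $\epsm\to0$ (the first term carries the prefactor $\epsm$, and the $\mu_\epsm$--term is estimated from above on the closed shell) and then as $\delta\to0$ (using $\mu^*(\partial B_R)=0$). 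This yields $v_\epsm\in H^1_{w_\epsm}(B_R)$ with $v_\epsm\to v$ in $L^1(B_R)$ and $\limsup_\epsm\F_\epsm(v_\epsm;w_\epsm,B_R)\le\F(v;w^*,B_R)$. The step I expect to be the main obstacle is exactly this last boundary--matching: it is crucial to carry the transition from the bulk value of $v$ to the boundary value $g$ through the optimal profile at scale $\epsm$ (a crude $\delta$--width interpolation would create a $W$--term of order $\epsm^{-1}\delta$), and to verify that $w_\epsm$ is genuinely $L^2$--close to $g$ on the shrinking shell; this is a fundamental--estimate / good--slices argument in the spirit of \cite{DM}, and it is here that $|w_\epsm|\le1$, $w_\epsm\to w^*$ in $L^1_{loc}(B_1)$ and $\mu^*(\partial B_R)=0$ enter in an essential way.
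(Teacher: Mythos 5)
Your compactness and $\Gamma$--liminf arguments agree in substance with the paper's (the paper also extends $v_\epsm$ by $w_\epsm$ to a slightly larger ball, applies the Modica chain--rule/lower--semicontinuity inequality there, subtracts the annulus contribution using $\mu_\epsm\weakstar\mu^*$, and lets the outer radius shrink, invoking $\mu^*(\partial B_R)=0$). The real problem is in your $\Gamma$--limsup step, and you yourself flag it at the end without filling it. Your extra--energy bound for the interpolation on the shell $\{R-\delta<|x|<R\}$,
\[
C\,\epsm\,\delta^{-2}\|v_\epsm^{0}-w_\epsm\|_{L^2}^{2}+\sqrt{2}\,\mu_\epsm(\{R-\delta<|x|<R\}),
\]
omits the potential term $\sqrt{2}\,\epsm\int W_\epsm(\hat v_\epsm)$ of the interpolated function itself. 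Even the crude worst--case bound $W_\epsm\le C\epsm^{-2}$ on a shell of fixed width $\delta$ yields a contribution of order $\epsm^{-1}\delta\to\infty$ as $\epsm\to0$, and the sharper bound $W(\hat v_\epsm)\lesssim |w_\epsm-g|^2$ (after pushing the profile layer inside, so $v_\epsm^0\equiv g$ in the shell) requires $\epsm^{-1}\|w_\epsm-g\|_{L^2(\text{shell})}^2\to 0$, which is precisely the ``fundamental estimate'' you postpone. There is also a smaller issue: you build the recovery sequence $v_\epsm^0$ directly for $\tilde v_{w^*}$ including the jump $\{\gamma v\neq g\}$ on $\partial B_R$, but this is only a measurable subset of the sphere, and the signed distance construction needs a regular interface; you would still need a prior density reduction.

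The paper resolves both points by reorganizing the argument. First (its Step~4, preliminary reduction), it replaces $v$ by $v_\lambda$ equal to $w^*$ outside $B_{\lambda R}$ and checks, using $\mu^*(\partial B_R)=0$, that $\F(v_\lambda;w^*,B_R)\to\F(v;w^*,B_R)$ as $\lambda\nearrow1$; after a diagonal argument one may therefore assume $v\equiv w^*$ near $\partial B_R$, so the interior recovery sequence (Step~3) already has small energy near $\partial B_R$ and the boundary jump set does not appear. Second, the matching to $w_\epsm$ is \emph{not} done on a fixed $\delta$--shell: the annulus $C_\delta$ is partitioned into $M_m\sim\delta/(\eta\epsm)$ concentric annuli of thickness $\tilde\e_m\sim\eta\epsm$, and a mean--value (De~Giorgi slicing) argument selects an index $k_m$ with $\tilde\e_m^{-1}\int_{\tilde C_{k_m}}|v_\epsm-w_\epsm|\to0$. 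The cut--off interpolation is then performed only on this $O(\epsm)$--wide annulus $\tilde C_{k_m}$, so that the worst--case bound $W_\epsm\le C\epsm^{-2}$ contributes $O(\eta)$ after multiplication by $\epsm\cdot|\tilde C_{k_m}|\sim\eta\epsm^2$, the gradient cross--term contributes $O(\eta)$ via the selected slice, and one finishes by a diagonal argument in $\eta$. Without this scale--$\epsm$ slicing your estimate does not close, so as written the $\Gamma$--limsup proof is incomplete.
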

This result represents the counterpart in the hyperbolic space of the classical $\Gamma$-convergence result  \cite{M}  for phase transitions in the Euclidean space. 
As for the Euclidean setting \cite{M},  the bound $|v_\epsm| \le 1$ in the compactness  statement is  very natural because the energy functionals decrease under truncation, and  it can be dropped assuming 
super-quadratic  growth conditions on the potential $W$ at infinity, like in the model case $W_\e(u)= \frac{1}{4 \e^2} (1-u^2)^2$. Note also that the $\Gamma$-limsup inequality would fail if in \eqref{gali} we neglect the contribution due to the possible jump between $w^*$ and $v$ across $\partial B_R$. In addition, the fact that this boundary contribution depends only on $w^*$ is indeed a consequence of the assumption $\mu^*(\partial B_R) = 0$ (see Remark \ref{assmuzero}). 
The previous  $\Gamma$-convergence result, applied to the local minimizers $u_\e$ yields the following theorem.

\begin{theorem}\label{micorre}
Let $\Om^+$ and $\Om^-$ be disjoint open subsets of $\sinf$, and let $F:=\sinf\setminus (\Om^+\cup\Om^-)$. 
Let $\epsm\to 0$ and let $u_\epsm$ be the locally minimizing  entire solutions of \eqref{equa} given by Theorem \ref{mainthm}   and Theorem \ref{ab}.
Then the following holds. 
\begin{itemize}
\item[i)] Up to a subsequence, $u_\epsm\to u^*$ in $L^1_{loc}(B_1)$ for some $u^*\in BV_{loc}(B_1;\{-1,1\})$. Moreover, the jump set  $ S_{u^*}$ satisfies  
$ S_{u^*}\subset \overline{\text{conv}(F)}$, and  
$\partial \Om^+ \cap \partial \Om^- \subseteq \overline{ S_{u^*}} \cap \sinf \subset F$ (where  the closure is understood in $\overline{B_1}$ with respect to to the Euclidean topology). In particular, if $\partial \Om^+=\partial\Om^- = F$ then $\overline{ S_{u^*}} \cap \sinf = F$.
\item[ii)] The limit $u^*$ is a local minimizer of the total variation, i.e., 
$|u^*|_{BV_g(B_R)} \le |v^*|_{BV_g(B_R)}$ for every $v^*\in BV_{loc}(B_1;\{+1,-1\})$ such that the support  of $(u^*-v^*)$ is compactly contained in some ball $B_R$, $0<R<1$.
\item[iii)]
The $(n-1)$-current $J_{u^*}$ corresponding to the jump set $S_{u^*}$  is a local mass minimizer, therefore for $n\leq 7$ it is a smooth (analytic) hypersurface, while for $n>7$  it has  a singular set $Z$ of dimension $dim \, Z \leq n-8$.  Finally, if $L:=\partial \Om^+ = \partial \Om^- = F$ is a $C^1$ hypersurface, then $J_{u^*}$ is a smooth hypersurface near the sphere at infinity, touching $\sinf$ orthogonally along $L$.
\end{itemize}
\end{theorem}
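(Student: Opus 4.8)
The plan is to establish the three items in order: i) comes from the Modica–Mortola–type compactness already contained in Theorem \ref{gcthm}(i) together with the level–set localization of Theorem \ref{mainthm}; ii) from the $\Gamma$–convergence statement of Theorem \ref{gcthm} via the classical principle that limits of minimizers minimize the $\Gamma$–limit; and iii) from the regularity theory for locally mass–minimizing hypersurfaces. For i) I would first record the uniform bound $\sqrt2\,\epsm\,\E_\epsm(u_\epsm,B_R)\le C(R)$ for each fixed $R<1$: this follows from $|u_\epsm|\le 1$ by the standard density/energy–upper–bound estimates for energy minimizers of the Allen–Cahn functional, or directly by comparison with the explicit barriers $\underline\psi_\epsm\le u_\epsm\le\overline\psi_\epsm$ of Theorem \ref{mainthm}, whose rescaled energies on $B_R$ stay bounded. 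Then the compactness part of Theorem \ref{gcthm}(i), applied on an exhaustion $B_{R_k}\uparrow B_1$ with a diagonal extraction, gives (up to a subsequence) $u_\epsm\to u^*$ in $L^1_{loc}(B_1)$ with $u^*\in BV_{loc}(B_1;\{-1,1\})$. For the localization of $S_{u^*}$: by Theorem \ref{mainthm} the continuous solution $u_\epsm$ has $u_\epsm^{-1}(0)\subset\overline{\mathrm{conv}(F)}$, hence a constant sign on each connected component $V$ of $B_1\setminus\overline{\mathrm{conv}(F)}$; since $u_\epsm\to u^*$ in $L^1$ with $|u^*|=1$ a.e., that sign is eventually constant along the subsequence, so $u^*$ is constant on $V$ and $\mathrm{spt}\,|Du^*|\subset\overline{\mathrm{conv}(F)}$, whence $\overline{S_{u^*}}\cap\sinf\subset\overline{\mathrm{conv}(F)}\cap\sinf=F$; the reverse inclusion $\partial\Om^+\cap\partial\Om^-\subseteq\overline{S_{u^*}}\cap\sinf$ again uses the barriers, which near such a point squeeze $u_\epsm\to +1$ on a region abutting $\Om^+$ and $u_\epsm\to -1$ on a region abutting $\Om^-$, forcing $S_{u^*}$ to accumulate there.

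For ii), once i) holds the pair $w_\epsm:=u_\epsm$, $w^*:=u^*$ satisfies \eqref{datalbo} after a further extraction making $\mu_\epsm\weakstar\mu^*$ (weak-$*$ compactness of measures plus the energy bound), so Theorem \ref{gcthm} applies to $\F_\epsm(\,\cdot\,;u_\epsm,B_R)$. By Definition \ref{local} and a routine $H^1$ cut–off argument pushing the agreement region out to $\partial B_R$, each $u_\epsm$ minimizes $\F_\epsm(\,\cdot\,;u_\epsm,B_R)$ over $L^1(B_R)$. Fix $R$ with $\mu^*(\partial B_R)=0$ and $|Du^*|(\partial B_R)=0$ (all but countably many $R$ qualify). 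For a competitor $v\in L^1(B_R)$, Theorem \ref{gcthm}(ii) gives $v_\epsm\to v$ with $\limsup_\epsm\F_\epsm(v_\epsm;u_\epsm,B_R)\le\F(v;u^*,B_R)$; then $\F(u^*;u^*,B_R)\le\liminf_\epsm\F_\epsm(u_\epsm;u_\epsm,B_R)\le\liminf_\epsm\F_\epsm(v_\epsm;u_\epsm,B_R)\le\F(v;u^*,B_R)$, so $u^*$ minimizes $\F(\,\cdot\,;u^*,B_R)$. Reading \eqref{gali}, and noting that the extension of $u^*|_{B_R}$ by $u^*$ outside $B_R$ is $u^*$ itself, that the extension of $v^*|_{B_R}$ by $u^*$ outside $B_R$ is $v^*$ whenever $\supp(u^*-v^*)\Subset B_R$, and that the choice of $R$ kills any jump across $\partial B_R$, this is precisely $|u^*|_{BV_g(B_R)}\le|v^*|_{BV_g(B_R)}$; a final comparison on the annulus between an arbitrary radius and a nearby good one removes the restriction to good radii.

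For iii), item ii) says exactly that $E:=\{u^*=1\}$ is a locally perimeter–minimizing set in the real–analytic manifold $(\Hyp^n,g)$, i.e. that $J_{u^*}$ (the integer multiplicity current carried by the reduced boundary $\partial^*E$ with its natural orientation) is locally mass–minimizing there. The interior conclusion is then the classical regularity theory (De Giorgi, Federer, Almgren, Simons), which is local and hence holds on the smooth manifold $(\Hyp^n,g)$: $\partial^*E$ is $C^{1,\alpha}$, hence $C^\infty$ by elliptic bootstrap and analytic since $g$ is analytic, while Federer's dimension reduction and the absence of singular area–minimizing cones in low dimension give $\dim(\mathrm{sing}\,J_{u^*})\le n-8$, so smoothness for $n\le 7$. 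For the behavior near $\sinf$ when $L=\partial\Om^+=\partial\Om^-=F$ is $C^1$, I would combine the identity $\overline{S_{u^*}}\cap\sinf=L$ from i) with the content of Theorem \ref{ab} — the $\Sigma_\epsm$ are $C^1$ near $\sinf$, with $\partial\Sigma_\epsm=L$, orthogonal to $\sinf$, and \eqref{abe} confines them to a shrinking neighborhood of the cone over $L$ — to identify $J_{u^*}$ as an area–minimizing current in $\Hyp^n$ whose ideal boundary at infinity is exactly the $C^1$ submanifold $L$; the boundary regularity theory at infinity for such currents (\cite{HL},\cite{La}) then yields that $J_{u^*}$ is $C^1$ up to $\sinf$ near $L$, with boundary $L$ and meeting $\sinf$ orthogonally.

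I expect the real obstacle to be this last identification: turning the soft convergence $u_\epsm\to u^*$ in $L^1_{loc}$ and the inclusion $\overline{S_{u^*}}\cap\sinf=L$ into the statement that the area–minimizing current $J_{u^*}$ has $L$ as ideal boundary in the precise sense (multiplicity one, correct orientation, no mass escaping to $\sinf$ away from $L$) required to invoke \cite{HL},\cite{La}; the barriers and the expansion \eqref{abe} are the right tools, but making the estimates uniform in $\epsm$ in a collar of $\sinf$ is the delicate point. Items i) and ii) are, by contrast, essentially bookkeeping on top of Theorem \ref{gcthm}, and the interior half of iii) is a citation.
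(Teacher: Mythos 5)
Your proposal follows the same overall line as the paper: Lemma~\ref{unicobo}-type energy bound plus the compactness in Theorem~\ref{gcthm}(i) and a diagonal extraction for (i); the standard ``$\Gamma$-convergence passes minimality to the limit'' chain of inequalities after fixing a good radius with $\mu^*(\partial B_R)=0$ for (ii); and the citation of codimension-one regularity theory and \cite{HL} for (iii). Two small remarks. First, your alternative justification of the rescaled energy bound ``directly by comparison with the explicit barriers $\underline\psi_\epsm\le u_\epsm\le\overline\psi_\epsm$'' is not the right mechanism: the barriers are suprema/infima of one-dimensional profiles and have no obvious uniform energy bound, nor do they coincide with $u_\epsm$ on $\partial B_R$ so that minimality could be tested against them; the paper obtains the bound by the interior gradient estimate $|\nabla u_\e|\le c\e^{-1}$ and a linear cut-off in an $\e$-collar of $\partial B_R$ (Lemma~\ref{unicobo}), which is what your first-mentioned ``standard energy upper bound'' argument amounts to. Second, for the localization $S_{u^*}\subset\overline{\mathrm{conv}(F)}$ you argue by sign-constancy of $u_\epsm$ on each connected component of $B_1\setminus\overline{\mathrm{conv}(F)}$; the paper instead exhibits, for each $x$ outside the convex hull, an elementary barrier $U_\e$ with $U_\e\to\pm1$ uniformly on a small neighborhood $N_x$, which trivially pins $u^*\equiv\pm1$ there. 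Your argument is also valid (fix $V$, extract a constant-sign subsequence, conclude $u^*\equiv\pm1$ a.e.\ on $V$), but the paper's barrier version is the one that also drives the reverse inclusion via the ``tube'' construction, and it avoids any component-by-component bookkeeping. Finally, your worry about upgrading the soft convergence to a genuine identification of the ideal boundary of $J_{u^*}$ before invoking \cite{HL} is reasonable, but the paper deliberately treats (iii) as a citation to \cite{F},\cite{HL} and does not address this, so you are not missing a step that the paper supplies.
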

Compactness of local minimizers follows from $\Gamma$-convergence, while the behaviour of the barriers as $\epsm \to 0$ allows to use  all the information on the zero sets $\Sigma_\epsm$ and to control the position of the jump set $S_{u^*}$ and its behaviour at infinity.   The minimality property of $u^*$  is a direct consequence of the fact that the minimality of $u_\e$ passes to the limit under $\Gamma$-convergence. The last part of the theorem is essentially well known, so we include it just for reader convenience. Indeed, the minimality for the current $J_{u^*}$ corresponding to $S_{u^*}$ is standard and  its interior regularity is a consequence of the celebrated regularity results for codimension-one mass minimizing currents (see \cite{F}). On the other hand, the last statement concerning boundary regularity and orthogonality at infinity has been established in \cite{HL}.

When $\Om^+\cup\Om^-$ is a dense open subset of $\sinf$,  the existence of a minimal hypersurface asymptotic to $F$ at infinity was originally proved in \cite{A1} for $F=\partial \Omega^+=\partial \Omega^-$ and $F=L$ an  immersed  smooth hypersurface, while for very irregular (possibly fractal) interfaces $F=\partial\Om^+=\partial \Om^-$ the result has been proved in \cite{La}. Here we consider a more general case without assuming 
$\Om^+\cup\Om^-$  dense. As a consequence the hypersurface is hinged at infinity only on the contact region $\partial\Om^+\cap \partial \Om^-$, while we expect that $J_{u^*}$ (and indeed also its boundary at infinity)  is a minimizer of a suitable free boundary problem.

Now we would like to discuss few possible directions of investigation. In our opinion it would be interesting to extend the phase transition approach to the case of constant mean curvature hypersurfaces with prescribed asymptotic boundary, as constructed in \cite{T} and \cite{GS} working with finite perimeter sets, and with the prescribed mean curvature equation respectively. On the other hand, another direction of investigation could be, in the same spirit of \cite{A2},  to study minimizing solutions to \eqref{equa} on hyperbolic manifolds, i.e. to investigate entire solutions to \eqref{equa} which are invariant under some discrete cocompact subgroup of isometries in $\Hyp^n$. It would be interesting as well to push further this method in order to deal with the vector valued case. In this way, one could obtain  minimal surfaces of higher codimension with prescribed behavior at infinity (already constructed in \cite{A1} using geometric measure theory) as a limit of solutions of elliptic systems. In the Euclidean framework the picture is quite well developed; the $\Gamma$-convergence result  has been done in \cite{ABO}, while for the asymptotic analysis of minimizers in the codimension-two case we refer to \cite{LR}. 
\par Finally, we mention that for $n=2$ a discrete analogue of our problem is given by the Ising model on hyperbolic graphs (i.e. on Cayley graphs corresponding to discrete cocompact groups of isometries acting on the hyperbolic plane) considered e.g. in \cite{SS}. For this model, we expect existence of uncountably many distinct local minimizers of the Hamiltonian which should be the natural discrete counterpart of the ones given by Theorem \ref{mainthm}. The presence of several local minimizers would be consistent with the existence of uncountably many mutually singular Gibbs measures on the the set of all spin configurations, rigorously proved in \cite[Theorem 1]{SS}, for sufficiently high inverse temperature. 

\section{Preliminary overview on the hyperbolic space}\label{preliminary}
In this section we will briefly review  the hyperbolic space, described according with  the {\it half space model} and the {\it Poincar\'e ball  model}. For each of these models, we recall the corresponding  metric, the volume element, the geodesics, and the notion of sphere at infinity. We introduce in these models our  energy functional and  
the corresponding Euler-Lagrange equation. We do not review the description of the group of isometries of each model in terms of their conformal homeomorphisms and instead we refer the interested reader e.g. to \cite{B}, Chapter 3. Finally we recall the basic definitions of BV functions on the hyperbolic space that we will need in the last section of the paper.   

\subsection{The half space model}
In this model,  the hyperbolic space $\Hyp^n$ is given by the half space
$$
\R^n_+ := \{(x_1, \ldots, x_n) \in \R^n: \, x_n>0 \}, 
$$ 
endowed with the Riemannian metric 
$$
g:= \frac{\sum_{i=1}^n dx_i^2}{x_n^2}.
$$
The induced volume element is given by
$$
dVol_g:= \frac{dx}{x_n^n}, 
$$
where $dx$ denotes the usual Lebesgue measure in $\R^n$. 
\par
The compactification of  the hyperbolic space is obtained adding to $\Hyp^n$ the so called {\it Sphere at infinity}  $\sinf$, that in  the half space model is given by 

$$
\sinf:= \partial \R^n_+ \cup \{\infty\}.
$$
Given two points $p$ and $q\in \Hyp^n$, the  geodesic joining $p$ and $q$ is given by  an arc of circle or by a segment  (joining $p$ and $q$), contained  in the only semi-circle or half line through  $p$ and $q$  and touching the hyper-plane $\partial \R^n_+$ orthogonally. 
\par
Finally, since $\nabla_g u(x) = x_n^2 \nabla u(x)$,  the energy functional \eqref{enfun} can be rewritten more explicitly as  
\begin{equation} \label{enfun2}
\E_\e(u,A):= \int_A \Big( \frac{1}{2} x_n^2 |\nabla u|^2 + W_\e(u)\Big) \, \frac{dx}{x_n^n},
\end{equation}
while, recalling that $f_\e(s) = -W'(s)$,  the corresponding Euler-Lagrange equation  \eqref{equa} reads as
\begin{equation}\label{equa2}
x_n^2 \Delta u + (2-n) x_n \partial_{x_n} u + f_\e(u)=0.
\end{equation}

\subsection{The Poincar\'e ball model}
In this model the hyperbolic space $\Hyp^n$ is given by the unit ball
$$
B_1:= \{x \in \R^n: \, |x|<1  \}, 
$$ 
endowed with the Riemannian metric 
$$
g:= \frac{4\sum_{i=1}^n dx_i^2}{(1-|x|^2)^2}.
$$
The corresponding volume element is given by
$$
dVol_g:= \frac{2^n dx}{(1-|x|^2)^n}.
$$

The  Sphere at infinity $\sinf$ in this case is just given by $\partial B_1$. 
Moreover, given two points $p$ and $q\in \Hyp^n$, the  geodesic joining $p$ and $q$ is given by  an arc of circle or by a segment  (with extremes $p$ and $q$), contained  in the only circle or  chord  passing through  $p$ and $q$  and touching $\partial B_1$  orthogonally. 
\par
Finally,  since $\nabla_g u(x) = \frac{(1-|x|^2)^2}{4}  \nabla u(x)$, the energy functional \eqref{enfun} is given by  
\begin{equation} \label{enfun3}
\E_\e(u,A):= \int_A \Big( \frac{1}{8} (1- |x|^2)^2 |\nabla u|^2 + W_\e(u)\Big) \, \frac{2^n dx}{(1-|x|^2)^n},
\end{equation}

while the corresponding Euler-Lagrange equation  \eqref{equa} reads as

\begin{equation}\label{equa3}
\frac{(1-|x|^2)^n}{2^n} \text{div} \left( \Big(\frac{1-|x|^2}{2}\Big)^{2-n} \nabla u\right) +f_\e(u)=0.
\end{equation}

\subsection{BV functions in $\Hyp^n$}
For the general theory of functions of bounded variation, 
we refer to the standard reference monograph \cite{AFP}, and we refer to \cite{MPPP} for the theory on Riemaniann manifolds ; here
we recall some basic definitions and properties we need in the sequel, confining ourselves  to  $BV$ functions defined on the hyperbolic space $\Hyp^n$. 
\par
Given any open set $A\subset\subset\Hyp^n$ compactly contained in $\Hyp^n$, we recall that  $u \in BV_g(A)$ if $u \in L^1(A, dVol_g)$,  and  it has finite total variation $|u|_{BV_g(A)}$, where  
$$
|u|_{BV_g(A)}:= \sup\left\{ \int_A u \text{ div}_g \Phi \, dVol_g, \,\Phi\in C^\infty_0(A;TA), \,  \|\Phi\| \le 1\right\} <\infty.
$$
Note that, since the hyperbolic metric is locally equivalent to the Euclidean one, we have (for any model of $\Hyp^n$) $BV_g(A) = BV(A)$ with equivalent, but not identical norms. As for the Euclidean case, we say that $u\in BV_{g, loc}(\Hyp^n)$ if $u$ (restricted on $A$) belongs to $BV_g(A)$ for every  open set $A$ compactly contained in $\Hyp$. 
In this case it turns out that the jump set $S(u)$, i.e., the set  of points $x \in A$ 
which are not Lebesgue points of $u$  (also referred to as the singular set of $u$), is $(n-1)$-rectifiable, that is
there exists a sequence of $C^1$ hypersurfaces $(M_i)_{i \in \N}$
such that $S(u) \subseteq \cup_i M_i$ up to a set of
$\hs^{n-1}${-}measure zero.
\par
We are interested in functions  $u \in BV_{loc, g}(\Hyp^n;\{-1,+1\})$, i.e., functions  $u \in BV_{g,loc}(\Hyp^n)$ valued in $\{-1,+1\}$.
For such functions we denote by $|D_gu|(A) =|u|_{BV_g(A)}$ the total variation of $u$ on $A$. It turns out that $|D_gu|(\cdot)$ is a locally finite Borel measure on $\Hyp^n$, and the following representation formula holds 
\begin{equation}
\label{hyptotvar}
|D_g u|(A)= 2 \hs_g^{n-1}(S(u)\cap A)  \qquad \text{ for all open set }
A \subset\subset \Hyp^n,
\end{equation}
where  $\hs_g^{n-1}$ denotes the $(n-1)$-dimensional Hausdorff  measure associated to the hyperbolic distance on $\Hyp^n$.  
Notice that, in the half space model, we have 
\begin{equation}
\label{halfsptotvar}
|D_g u|(A)= 2 \int_{S(u)\cap A}  \frac{1}{x_{n}^{n-1}} d\hs^{n-1},
\end{equation}
where $d\hs^{n-1}$ denotes now the standard Euclidean $(n-1)$-dimensional Hausdorff measure, thus it is the usual Euclidean formula up to a conformal factor due to the hyperbolic metric. 

\section{One-dimensional phase transitions}
\label{odsSec}
In this section we will construct elementary solutions to equation \eqref{equa}, i.e., solutions corresponding  to the case when $\Om^+$ and $\Omega^-$ are  disjoint spherical caps with common boundary. 
\par
We will  work mainly in the half space model, where we construct  elementary solutions through a one dimensional reduction, then solving an ODE in $\R$ by a minimization argument, in the spirit of \cite{BM}. Our method will produce in particular odd solutions $h_\e$, and this property will be essential in our approach, since it provides  the desired asymptotic behaviour   as $\e\to 0$ of the barriers $\underline\psi_\e$, $\overline\psi_\e$ that we will construct in Section \ref{mpt}.  Moreover, we give a uniqueness result for solutions of the ODE vanishing at zero, which in turns yields the uniqueness property for elementary solutions vanishing on $\Sigma$.

\subsection{One-dimensional reduction and existence for the ODE}\label{ods}
Here we are looking for particular elementary solutions  $u(x_1, \ldots, x_n)$ to equation \eqref{equa2}, which are odd with respect to $x_1$, and satisfying the boundary condition  
$u(x) = \text{sgn}(x_1)$ on the hyperplane $\{x_n =  0\}$. More precisely, we construct one dimensional solutions, which are constant on the level sets  $\{\frac{x_1}{x_n} = c\}$ of the  distance function from  $\Sigma_0:=\{x_1 = 0\}\subset \R^n_+.$ Thus,  
enforcing that the solution takes the form 
$$
u_\e(x) = g_\e\Big(\frac{x_1}{x_n}\Big),
$$
we obtain the following boundary value problem for $g_\e(\xi)$,\, $\xi=\frac{x_1}{x_n}$\,,
\begin{equation}\label{eq0dim}
\begin{cases}
(1 + \xi^2) g_\e''(\xi) + n \xi g_\e'(\xi) =  - f_\e(g_\e(\xi));\\
g_\e(\pm \infty) = \pm 1.
\end{cases} 
\end{equation}
Since  the signed distance $\tilde d$ from $\Sigma_0$ satisfies $\tilde d(x,\Sigma_0) = \sinh^{-1}(\xi)$, it is convenient to 
set $\tau= \sinh^{-1} (\xi)$ and to define $h_\e(\tau) = g_\e(\xi)$, so that 
$h_\e$ has to solve    
\begin{equation}\label{eq1dim}
\begin{cases}
h_\e''(\tau) + (n-1) \tanh \tau  h_\e'(\tau) = - f_\e(h_\e(\tau));\\
h_\e(\pm \infty) = \pm 1.
\end{cases} 
\end{equation}

Such equation is the Euler-Lagrange equation of the energy functional 

\begin{equation}\label{1dene}
E_\e(h) = \int \Big(\frac{1}{2} {h'}^2 + W_\e(h)\Big) \cosh^{n-1}\tau \, d\tau.
\end{equation}

\begin{proposition}\label{odsprop}
Let $\e>0$ and let $f_\e= - W'_\e$, with $W$ satisfying the assumptions in \eqref{ass}. Then problem \eqref{eq1dim} admits a solution $h_\e$ which is odd and strictly increasing.  
\par
Moreover, $E_\e(h_\e)\le C/\e$ for some positive constant $C$ independent of $\e$, and $h_\e(\tau)\to \text{sgn}(\tau)$ locally uniformly in $\R\setminus \{0\}$ as $\e \to0$.
\end{proposition}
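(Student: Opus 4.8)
The plan is to produce $h_\e$ by the direct method applied to the one–dimensional functional \eqref{1dene}, and then to read off oddness, strict monotonicity and the limits at $\pm\infty$ from the equation \eqref{eq1dim} together with the sign information on $W'$ contained in \eqref{ass}. Concretely, I would minimize $E_\e$ over the class $\mathcal{C}_\e$ of \emph{odd} functions $h\in H^1_{loc}(\R)$ with $|h|\le1$ and $E_\e(h)<\infty$. Two preliminary remarks fix the ideas: truncating any competitor at the levels $\pm1$ only decreases $E_\e$ (by assumption iv)), so the constraint $|h|\le1$ is harmless; and the piecewise affine profile $\bar h_\e(\tau):=\mathrm{sgn}(\tau)\min(|\tau|/\e,1)$, whose energy is concentrated on $[-\e,\e]$ where $\cosh^{n-1}\tau\le\cosh^{n-1}1$, satisfies $E_\e(\bar h_\e)\le C/\e$ with $C=C(n,W)$; hence $\mathcal C_\e\neq\emptyset$ and $\inf_{\mathcal C_\e}E_\e\le C/\e$, which will give the claimed energy bound. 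Note that the trivial function $h\equiv0$ is \emph{not} in $\mathcal C_\e$, since $W(0)>0$ forces $E_\e(0)=+\infty$; this is what encodes, together with oddness and finiteness of the exponentially weighted energy, the boundary condition $h(\pm\infty)=\pm1$.

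For the direct method, a minimizing sequence $h_k$ is bounded in $H^1_{loc}(\R)$: the gradient term controls $\int (h_k')^2\cosh^{n-1}\tau\,d\tau$, hence $\int_{-R}^R(h_k')^2$, while $|h_k|\le1$. Up to a subsequence, $h_k\rightharpoonup h_\e$ in $H^1_{loc}$ and $h_k\to h_\e$ locally uniformly; convexity of the integrand in $h'$, continuity of $W$, and letting the integration interval exhaust $\R$ give lower semicontinuity, so $h_\e$ is a minimizer, odd, with $|h_\e|\le1$ and finite energy. Since both $E_\e$ and $\mathcal C_\e$ are invariant under the involution $h(\tau)\mapsto-h(-\tau)$, whose fixed set is exactly the odd functions, $h_\e$ is in fact a critical point of $E_\e$ among all finite–energy perturbations: writing a test function as $\varphi=\varphi_{\mathrm{odd}}+\varphi_{\mathrm{even}}$, the odd part is an admissible variation while the even part makes $t\mapsto E_\e(h_\e+t\varphi_{\mathrm{even}})$ even in $t$ by the symmetry. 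Thus $h_\e$ solves \eqref{eq1dim} weakly, and since $f_\e=-W_\e'\in C^1$ standard ODE regularity gives $h_\e\in C^2(\R)$; by oddness $h_\e(0)=0$, and $h_\e'(0)>0$ (otherwise uniqueness for \eqref{eq1dim} would force $h_\e\equiv0$).

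Next I would extract the qualitative properties. First, replacing $h_\e$ by $\mathrm{sgn}(\tau)\,|h_\e(\tau)|$ — which is again odd, has the same energy ($|h_\e'|$ and $W(h_\e)$ are unchanged a.e.) and lies in $\mathcal C_\e$ — we may assume $h_\e\ge0$ on $[0,\infty)$, and by regularity it still solves \eqref{eq1dim} classically. Now evaluate \eqref{eq1dim} at an interior critical point $\tau_0\in(0,\infty)$: it gives $h_\e''(\tau_0)=W_\e'(h_\e(\tau_0))$, which by iv) is strictly negative when $h_\e(\tau_0)\in(0,1)$ and vanishes only when $h_\e(\tau_0)\in\{0,1\}$; in the latter case uniqueness for \eqref{eq1dim} forces $h_\e\equiv0$ or $h_\e\equiv1$, both excluded. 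Hence $h_\e$ has no interior local minimum on $(0,\infty)$ and no interior critical point there with value in $(0,1)$; an interior local maximum is also impossible, since it would keep $h_\e$ bounded away from $1$ near $+\infty$, whereas finiteness of $\int W_\e(h_\e)\cosh^{n-1}\tau\,d\tau$ together with $\cosh^{n-1}\tau\to\infty$ and $h_\e\ge0$ forces $h_\e(\tau_j)\to1$ along some $\tau_j\to\infty$. Therefore $h_\e'>0$ on $[0,\infty)$, $h_\e(\tau)\nearrow1$ as $\tau\to+\infty$, and by oddness $h_\e$ is strictly increasing on $\R$ with $h_\e(\pm\infty)=\pm1$. (A robust alternative for the monotonicity that avoids any strengthening of iv) is the variational ``cut–and–translate'' argument: if $h_\e$ were not nondecreasing on $[0,\infty)$ one could find $a'<b'$ with $h_\e(a')=h_\e(b')$, $h_\e$ nonconstant on $(a',b')$, and, removing that arc and translating the rest, strictly lower the energy because $\cosh^{n-1}$ is increasing on $[0,\infty)$.)

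Finally, for the limit $\e\to0$: from $E_\e(h_\e)\le C/\e$ we get $\int W(h_\e)\cosh^{n-1}\tau\,d\tau\le C\e\to0$, so $W(h_\e)\to0$ in $L^1_{loc}(\R)$. Since the $h_\e$ are monotone and uniformly bounded, Helly's selection theorem yields a subsequence converging pointwise to some odd nondecreasing $h_0$ with $\int W(h_0)\cosh^{n-1}\tau\,d\tau=0$ by Fatou, hence $h_0\in\{-1,1\}$ a.e.; being odd and nondecreasing, $h_0=\mathrm{sgn}$ on $\R\setminus\{0\}$. As the $h_\e$ are monotone and converge pointwise to $\mathrm{sgn}$, which is continuous on $\R\setminus\{0\}$, the convergence is locally uniform there, and since the limit is independent of the subsequence the whole family converges. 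The two points requiring care are the behaviour at $\pm\infty$ — which must be deduced from the finiteness of the exponentially weighted energy rather than imposed as a Dirichlet condition — and the strict monotonicity, which here cannot be obtained by the usual translation/sliding argument since the operator in \eqref{eq1dim} is not autonomous; the substitute is the analysis of $W'$ at interior critical points (or the cut–and–translate argument) described above.
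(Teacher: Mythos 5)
Your proof is correct, and it reaches the same conclusions by a route that is recognizably different from the paper's, even though both are variational. The paper minimizes $E_\e$ over $H^1_{loc}(\R_+)$ with the pinning condition $h(0)=0$, proves the minimizer on the half-line is nondecreasing by a pointwise \emph{max-replacement} argument (substituting $\max\{h^+_\e,h^+_\e(\tau_3)\}$ on a troublesome interval, which strictly lowers $E_\e$ because $W$ is strictly decreasing on $[0,1]$), then upgrades to strict monotonicity by the ODE sign argument, and finally odd-reflects to get a full-line solution. You instead minimize directly over the class of odd functions on $\R$, recover the full Euler--Lagrange equation (rather than the one restricted to odd variations) by the principle of symmetric criticality --- decomposing a test function into odd and even parts and observing that $t\mapsto E_\e(h_\e+t\varphi_{\rm even})$ is even because $E_\e$ is invariant under $h(\tau)\mapsto -h(-\tau)$ --- and then get strict monotonicity in one step from the critical-point analysis (at an interior critical point in $(0,\infty)$ the equation forces $h_\e''=W'_\e(h_\e)<0$ when $h_\e\in(0,1)$, and ODE uniqueness handles the values $0,1$). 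Both routes use the same explicit competitor for the $C/\e$ energy bound, and both deduce the $\e\to0$ limit from that bound plus monotonicity; you are more explicit (Helly plus monotone-to-continuous implies local uniform convergence), whereas the paper is terse here.

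Two spots where your write-up is slightly looser than it should be, though neither is a real gap. First, the replacement of $h_\e$ by $\mathrm{sgn}(\tau)|h_\e(\tau)|$ could a priori create a corner at an interior zero-crossing of $h_\e$; the correct reading of ``by regularity it still solves \eqref{eq1dim} classically'' is that the replacement, being another minimizer in the odd class, is by symmetric criticality a weak solution, hence $C^2$ by ODE regularity --- and this, in retrospect, rules out any corner and forces the original $h_\e$ to have a sign on $(0,\infty)$. It is worth saying this explicitly. (The paper sidesteps this by taking the minimizing sequence nonnegative on $\R_+$ from the outset.) Second, the passage ``no interior local minimum ... no interior local maximum ... therefore $h_\e'>0$'' can be streamlined: once $h_\e\ge0$ on $[0,\infty)$, $h_\e(0)=0$, $h_\e'(0)>0$ and $|h_\e|\le1$, \emph{every} critical point of $h_\e$ in $(0,\infty)$ is ruled out outright (values in $(0,1)$ by the sign of $W'$, the endpoint values $0,1$ by ODE uniqueness against the constant solutions), so $h_\e'$ never vanishes and keeps the sign it has at $0$. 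Your aside about $\mathrm{iv)}$ not literally giving $W'<0$ on $(0,1)$ is a fair remark; the paper makes the same tacit strengthening, and both your cut-and-translate alternative and the paper's max-replacement argument provide a version of the monotonicity step that avoids it.
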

\begin{proof}
In order to find a solution of equation \eqref{eq1dim}, we consider the following minimization problem
\begin{equation}\label{mp}
\min \{ E_\e(h), \, h\in H_{loc}^1(\R_+), \, h(0)=0  \}.
\end{equation}
Let as first prove that the minimum problem \eqref{mp} admits a minimizer $h^+_\e$ which is increasing and satisfies $h_\e(+\infty)= 1$. 
\par
Let $h^+_{\e,k}$ be a minimizing sequence for \eqref{mp}. Since the potential $W$ is even, we may assume without loss of generality (taking the absolute value if necessary)  that $h^+_{\e,k}$ are positive.  
Since $h^+_{\e,k}$ have finite energy and $h^+_{\e,k}(0)= 0$ we easily deduce that  $h^+_{\e,k}$ is bounded in  $H_{loc}^1(\R_+)$, i.e., it is bounded in $H^1(0,M)$ for every positive $M$. Therefore, in view of  the compact embedding $H^1_{loc}\hookrightarrow C^0_{loc}$,  a diagonal argument yields  that (up to a subsequence)  $h^+_{\e,k}$  converges locally uniformly to some continuous function $h^{+}_\e$,  with $h^{+}_\e(0)=0$, and  $h^+_{\e,k} \weak  h^{+}_\e$ in $H^1(0,M)$ for every $M$, so that in particular  $h^{+}_\e$ belongs to $H_{loc}^1(\R_+)$. Since for all positive $M$ the 
functional $ E_\e$ is weakly lower semicontinuous in $H^1(0,M)$, and since $h^+_{\e,k}$ is a minimizing sequence, 
we have
\begin{multline}\label{lsc}
\int_0^M \Big(\frac{1}{2} {(h^+_\e)'}^2 + W_\e(h^+_\e)\Big) \cosh^{n-1}\tau \, d\tau 
\\
\le \liminf_k
\int_0^M \Big(\frac{1}{2} {(h^+_{\e,k})'}^2 + W_\e(h^+_{\e,k})\Big) \cosh^{n-1}\tau \, d\tau
\le \lim_k E_\e(h^+_{\e,k}) = \inf E_\e. 
\end{multline}

Since $h^+_\e$ is an admissible function in the minimum problem \eqref{mp}, passing to the limit for $M\to\infty$ in \eqref{lsc} we conclude that
$h^+_\e$ is a minimum point. Clearly $h_\e^+ \not \equiv 0$ and since it has finite energy, by  a truncation argument  we also deduce that $0 \le h^+_\e \le 1$ and that $\limsup_\tau h_\e^+=1$ as $\tau \to \infty$. 
\par
Setting $w_\e (\tau):= \min\{ \tau/\e  , 1\}$ we have that   $E_\e(w_\e)\le C \e^{-1}$ for some $C>0$ independent of $\e$, and therefore 
\begin{equation}\label{unili}
E_\e(h^+_\e)\le E_\e(w_\e)\le \frac{C}{\e}.
\end{equation}
Let us prove by a contradiction argument that $h^+_\e$ is non decreasing.     
Since $h_\e^+$ is continuous, $0 \leq h_\e^+\leq 1$ and $\limsup_\tau h_\e^+=1$ as $\tau \to\infty$,  we may assume by contradiction that there exist three points $\tau_1 < \tau_2 < \tau_3$ with $0 \leq h^+_\e(\tau_2 ) < h^+_\e(\tau_1 )= h^+_\e(\tau_3) <1$. 
Set $w(\tau) = \max\{h^+_\e(\tau) , h^+_\e(\tau_3) \}$ for every $\tau \in (\tau_1,\tau_3)$. Then, replacing $h^+_\e$ with $w$ in $(\tau_1,\tau_3)$ we obtain an admissible function $\tilde h$, with $E_\e(\tilde h)< E_\e(h^+_\e)$, which is in contradiction with the minimality of $h^+_\e$.
Now we claim that we have $(h^+_\e(\tau))'>0$ in $[0,\infty)$. Indeed, if $\tau = 0 $ then  $(h^+_\e(\tau))'>0$ because otherwise we would have by ODE uniqueness   $h^+_\e\equiv 0$, which  contradicts $E_\e(\tilde h^+_\e)<\frac{C}{\e}$. While for $\tau > 0$,  if $(h^+_\e(\tau))'=0$ equation \eqref{eq1dim} would  imply $(h^+_\e(\tau))''<0$, which is in contradiction with the monotonicity of $h_\e^+$ because $(h^+_\e )'$ would be negative just after $\tau$.
\par
By the fact that  $h^+_\e$ is bounded and increasing we deduce that it admits limit for $\tau \to \infty$; moreover   $h^+_\e(+\infty)=1$ because  $E_\e(h^+_\e)<C/\e$ and $W(t) = 0$ exactly on $\{t= \pm 1\}$. Analogously, since  $E_\e(h^+_\e)<C/\e$  it also follows that $h^+_\e$ converges to $1$ locally uniformly in $\R_+$.
\par
Finally, we define $h_\e(\tau)$ as the odd reflection of $h^+_\e$, i.e., $h_\e(\tau):= \text{sgn}(\tau) h^+_\e(|\tau|)$, hence $h_\e\in C^1(\R)$. By minimality we have that  $h^+_\e$  solves the  equation  in $(0,+\infty)$, and therefore $h_\e$ is a solution of  \eqref{eq1dim}, and it has all the desired properties.  
\end{proof}

\subsection{A uniqueness property for solutions of the ODE}
In this paragraph we  provide a variational characterization for the solutions of  problem  \eqref{eq1dim}. As a consequence   we obtain  that there exists a unique solution vanishing at zero. We start with the following lemma.

\begin{lemma}\label{lemmasi}
Every solution to  problem \eqref{eq1dim} is strictly increasing and has finite energy.  
\end{lemma}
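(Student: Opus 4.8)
The plan is to establish the two assertions — strict monotonicity and finiteness of energy — in this order, since finiteness of energy turns out to be a consequence of a more quantitative comparison argument once we know the solution is monotone and bounded. First I would show that any solution $h$ of \eqref{eq1dim} satisfies $|h|\le 1$. Indeed, if $h(\tau_0)>1$ for some $\tau_0$, then since $h(\pm\infty)=\pm1$ the function $h$ attains an interior maximum $M>1$ at some point $\bar\tau$ (or along a sequence escaping to $+\infty$, which is excluded by $h(+\infty)=1$); at such a point $h'(\bar\tau)=0$ and $h''(\bar\tau)\le 0$, so \eqref{eq1dim} gives $0\ge h''(\bar\tau)=-f_\e(M)=W_\e'(M)/1>0$ by assumption \eqref{ass}(iv) (which makes $W$ strictly increasing past $1$, hence $W_\e'(M)>0$), a contradiction. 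The symmetric argument rules out $h<-1$. So $-1\le h\le 1$, and by the strong maximum principle applied to the linear operator $L h = h''+(n-1)\tanh\tau\, h'$, since $h\not\equiv \pm1$ (because $h(\pm\infty)=\pm1$), in fact $-1<h<1$ everywhere.

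Next, strict monotonicity. I would argue that $h$ cannot have an interior local extremum in $\R$. Suppose $h$ has a local maximum at $\bar\tau$ with value $c=h(\bar\tau)\in(-1,1)$; then $h'(\bar\tau)=0$, and \eqref{eq1dim} gives $h''(\bar\tau)=-f_\e(c)=W_\e'(c)$. Since $c\in(-1,1)$, by \eqref{ass}(iv) ($W$ strictly decreasing on $[0,1]$, and by symmetry strictly increasing on $[-1,0]$, so $W'(c)<0$ for $c\in(0,1)$ and $W'(c)>0$ for $c\in(-1,0)$; also $W'(0)=0$ by \eqref{ass}(i)), we get $h''(\bar\tau)=W_\e'(c)<0$ if $c>0$, giving a genuine strict local max — but then just past $\bar\tau$ the function strictly decreases and can never climb back to $1$ at $+\infty$ without creating another interior extremum, and iterating we would trap $h$ below some level, contradicting $h(+\infty)=1$; if $c\le 0$ a symmetric or direct ODE-uniqueness argument (if $c=0$ and $h'(\bar\tau)=0$ then $h\equiv 0$ by uniqueness for the ODE, contradicting $h(\pm\infty)=\pm1$) again yields a contradiction. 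Hence $h$ has no interior extrema, so $h'$ has a constant sign; since $h(-\infty)=-1<1=h(+\infty)$, that sign is positive, i.e. $h'>0$ and $h$ is strictly increasing. (One should note $h'>0$ is strict: if $h'(\tau_*)=0$ at some point, then as above either $h\equiv 0$ by ODE uniqueness, or $h$ has a strict extremum there, both impossible.)

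Finally, finiteness of the energy $E_\e(h)$ in \eqref{1dene}. Here I would compare $h$ with the minimizer $h_\e$ constructed in Proposition \ref{odsprop}, for which $E_\e(h_\e)\le C/\e<\infty$. Since $h$ is strictly increasing with $h(\pm\infty)=\pm1$, there is a unique $\tau_0$ with $h(\tau_0)=0$; by translation invariance it would be cleanest first to reduce to the case $\tau_0=0$ — but \eqref{eq1dim} is \emph{not} translation invariant because of the $\tanh\tau$ coefficient, so instead I would compare directly. The key observation is the conservation-type identity obtained by multiplying \eqref{eq1dim} by $h'$: one gets $\frac{d}{d\tau}\big(\tfrac12 (h')^2 - W_\e(h)\big) = -(n-1)\tanh\tau\,(h')^2$. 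Integrating from $0$ to $\tau>0$ (where $\tanh\tau>0$) shows $\tfrac12(h'(\tau))^2 - W_\e(h(\tau)) \le \tfrac12(h'(0))^2 - W_\e(h(0))$, so $\tfrac12(h'(\tau))^2\le \tfrac12(h'(0))^2+W_\e(h(\tau)) - W_\e(h(0))$, and since $W_\e\ge 0$ and $W_\e(h(0))\le W_\e(0)=\max_{[-1,1]}W/\e^2$, $(h')^2$ stays bounded; moreover as $\tau\to+\infty$, $W_\e(h(\tau))\to 0$, which forces $h'(\tau)\to 0$ and, combined with the monotone decay of $\tfrac12(h')^2-W_\e(h)$ for $\tau>0$ (by the sign of the right side), one extracts the decay $W_\e(h(\tau))\sim \tfrac12 (h'(\tau))^2$ and integrability. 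To make the energy bound clean, I would instead invoke the minimality characterization promised in the subsection: $h_\e^+$ minimizes $E_\e$ among $H^1_{loc}(\R_+)$ functions vanishing at $0$, and — via the variational characterization the authors announce they will prove — \emph{every} solution of \eqref{eq1dim} is the odd reflection of a minimizer over its own translate, hence has energy comparable to $C/\e$.

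\emph{Main obstacle.} The delicate point is the energy finiteness: a priori a solution of \eqref{eq1dim} could in principle approach its limits $\pm1$ too slowly (or oscillate near infinity) so that $\int W_\e(h)\cosh^{n-1}\tau\,d\tau=\infty$ despite the exponential weight. The serious work is to show the solution approaches $\pm1$ exponentially fast enough to beat $\cosh^{n-1}\tau$; this requires linearizing \eqref{eq1dim} at $h=\pm1$ (using $W''(1)>0$ from \eqref{ass}(iii)), and checking that the stable manifold of the asymptotic equation $h''+(n-1)h' = W''(1)(h-1)$ at $+\infty$ decays like $e^{-\lambda\tau}$ with $\lambda>\tfrac{n-1}{2}$, so that $(h-1)^2\cosh^{n-1}\tau$ is integrable. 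I expect this ODE-asymptotics step, rather than the monotonicity, to be where the real content of the lemma lies.
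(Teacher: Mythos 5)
Your proposal has the right intuitions but contains two genuine gaps, one in each half of the lemma, and one circularity.

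\textbf{Strict monotonicity.} Your case analysis at a putative critical point $\bar\tau$ with $c=h(\bar\tau)$ is sound as far as it goes, but the key step ``just past $\bar\tau$ the function strictly decreases and can never climb back to $1$ at $+\infty$ without creating another interior extremum, and iterating we would trap $h$ below some level'' is not a proof: after a strict local maximum at height $c>0$ the solution could decrease, cross zero, reach a strict local minimum at negative height, rise again, and so on, and your ``iterating'' does not rule this out. The paper closes this with a Lyapunov-type argument using precisely the quantity you introduce later, $V=\tfrac12(h')^2-W_\e(h)$, for which $V'=-(n-1)\tanh\tau\,(h')^2$; since $V$ is nonincreasing in $|\tau|$, a zero of $h'$ at $\bar\tau$ (where necessarily $|h(\bar\tau)|\ne 1$ by ODE uniqueness) forces $V(\bar\tau)=-W_\e(h(\bar\tau))<0$, so $V\le V(\bar\tau)<0$ for all $\tau$ on the $\bar\tau$-side away from the origin, contradicting $\liminf V\ge\liminf(-W_\e(h))=0$ at infinity. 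You have the identity in hand; you just need to apply it here rather than in the energy step.

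\textbf{Energy finiteness.} Here there are two problems. First, option (b) — invoking the minimality characterization ``the authors announce they will prove'' — is circular: Proposition~\ref{nac} uses this lemma as input, so you cannot use it to prove the lemma. Second, option (a)/(c): the unweighted multiplier identity $V'=-(n-1)\tanh\tau\,(h')^2$ controls $V$, but after integration it does not by itself give $(h')^2\cosh^{n-1}\tau$ bounded, which is what you need to beat the exponential weight; and the linearization route, while plausible, needs extra work (stable-manifold asymptotics, matching the nonlinear solution to the linear decay rate) that you do not carry out. The paper avoids all of this by multiplying the ODE by the \emph{weighted} factor $k_\e'\cosh^{n-1}\tau$, yielding
$\frac{d}{d\tau}\bigl(\tfrac12(k_\e')^2\cosh^{n-1}\tau\bigr)+\tfrac{n-1}{2}(k_\e')^2\cosh^{n-1}\tau\tanh\tau+f_\e(k_\e)k_\e'\cosh^{n-1}\tau=0$;
integrating from $\bar\tau=k_\e^{-1}(0)$ and using $f_\e(t)t\ge 0$, $k_\e'>0$ (established above) and $\tanh\to\pm1$, one reads off directly that $\int_\R (k_\e')^2\cosh^{n-1}$ is finite and $(k_\e')^2\cosh^{n-1}\tau$ is bounded, hence $k_\e'$ decays exponentially; from there, elementary integration and the Taylor expansion of $W$ at $\pm1$ (using $W''(\pm1)>0$) give $W_\e(k_\e)\cosh^{n-1}\tau$ bounded, and an integration by parts closes the argument. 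So the correct remedy for your ``main obstacle'' is not linearization but the weighted multiplier, which produces the needed decay estimate in one stroke.

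Incidentally, the paper does not need your preliminary step $|h|\le 1$ for this lemma: ODE uniqueness at $\pm1$ is invoked only to rule out $|h(\bar\tau)|=1$ at a critical point, which is weaker and avoids the full maximum-principle discussion.
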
 

\begin{proof}
Let $k_\e$ be a solution of \eqref{eq1dim}. In order to prove that it is strictly increasing, it is clearly enough to show that $k_\e' (\tau)\neq 0$
for every $\tau\in\R$. To this purpose, set 
$V(h,h'):= 1/2  (h'(\tau))^2 - W(h)$.  It is easy to see that 
\begin{equation}\label{liapu}
\frac{d}{d\tau} V(k_\e(\tau), k_\e' (\tau)) = - (n-1) \tanh \tau (k_\e'(\tau))^2.
\end{equation}
Assume by contradiction that  $k_\e' (\bar\tau) = 0$ for some $\bar\tau\in\R$. Then we clearly have $|k_\e(\bar \tau) |\neq 1$ by ODE uniqueness, and therefore, $V(k_\e(\bar\tau), k_\e' (\bar\tau)) =\alpha < 0$. We consider only the case $\bar\tau>0$, the other case being analogous. By \eqref{liapu} we deduce  $V(k_\e(\tau), k_\e' (\tau)) \le \alpha <0$ for every $ \tau\ge \bar\tau$, which clearly gives a contradiction since  $\liminf_{\tau\to\infty} V(k_\e(\tau),k_\e(\tau)') \ge \liminf_{\tau\to\infty} -W(k_\e(\tau)) = 0$.
\par 
Let us prove now that $k_\e$ as finite energy. Multiplying both sides of \eqref{eq1dim} by $k_\e' \cosh^{n-1}\tau$ we have 
\begin{equation}\label{stieneloc}
\frac{d}{d\tau} \left(\frac{1}{2} (k_\e')^2 \cosh^{n-1}\tau\right) + \frac{n-1}{2} (k_\e')^2 \cosh^{n-1} \tau \tanh \tau + f_\e(k_\e) k_\e' \cosh^{n-1} \tau = 0.
\end{equation}
Integrating equation \eqref{stieneloc} between  $\bar \tau:= k_\e^{-1}(0)$ and  $\tau \in \R$, we deduce
\begin{multline}\label{multi}
 \left(\frac{1}{2} (k_\e'(\tau))^2 \cosh^{n-1}\tau\right) + \int_{\bar \tau}^\tau  \left( \frac{n-1}{2} (k_\e'(s))^2   \tanh s + f_\e(k_\e(s)) k_\e^\prime(s) \right) \cosh^{n-1} s \, ds =\\
=  \left(\frac{1}{2} (k_\e'(\bar\tau))^2 \cosh^{n-1}\bar\tau\right) . 
\end{multline}
Since $f_\e(t)t\ge 0$ for $|t|\le 1$ and $k_\e'>0$, we easily obtain that for every $\tau \in \R$
$$
 \left(\frac{1}{2} (k_\e'(\tau))^2 \cosh^{n-1}\tau\right)
 + \int_{\bar \tau}^\tau  f_\e(k_\e(s)) k_\e^\prime(s)  \cosh^{n-1} s \, ds \ge 0.
$$
Since  $\tanh(s)\to \pm 1$  as $s\to \pm \infty$, \eqref{multi} yields
\begin{equation}\label{cinepot}
\int_\R \frac{1}{2} (k_\e'(\tau))^2 \cosh^{n-1}(\tau) \, d\tau < \infty,
\qquad
(k_\e'(\tau))^2 \cosh^{n-1}(\tau)\le C < \infty \text{ for every } \tau \in \R. 
\end{equation}
By \eqref{multi} and \eqref{cinepot} we easily deduce that
\begin{equation}\label{inter}
\left|\int_{\bar \tau}^\tau  f_\e(k_\e(s)) k_\e'(s)  \cosh^{n-1} s   \, ds \right| \le C \qquad \text{ for a constant $C$ independent of } \tau. 
\end{equation}
A simple integration by parts gives 
\begin{multline}\label{ulti}
\int_{\bar \tau}^\tau  f_\e(k_\e(s)) k_\e'(s)  \cosh^{n-1} s   \, ds  = 
\\
=
\int_{\bar \tau}^\tau (n-1) W_\e(k_\e(s)) \cosh^{n-1}s \tanh s - \frac{d}{ds} (W_\e(k_\e(s)) \cosh^{n-1} s)  \, ds =
\\
= \int_{\bar \tau}^\tau (n-1) W_\e(k_\e(s)) \cosh^{n-1}s \tanh s \, ds + W_\e(k_\e(\bar \tau)) \cosh^{n-1} \bar \tau   
- W_\e(k_\e( \tau)) \cosh^{n-1}  \tau 
\end{multline}
Taking into account the exponential decay of $k_\e'$ given by the second inequality in \eqref{cinepot}, a simple integration yields the exponential decay of $1- k_\e^2(\tau)$, so that $(1-k_\e^2(\tau))^2 \cosh^{n-1} \tau \le C <\infty$ for every $\tau$. By Taylor expansion around the minima of $W$ we also get 
\begin{equation}\label{po}
W_\e(k_\e(\tau)) \cosh^{n-1} \tau \le C <\infty
\qquad \text{ for every } \tau. 
\end{equation}
By \eqref{inter}, \eqref{ulti} and \eqref{po} we deduce that 
$$
\int_\R  W_\e(k_\e) \cosh^{n-1}(\tau) \, d\tau < \infty,
$$
which together with \eqref{cinepot} yields  $E_\e(k_\e)<\infty$.  
\end{proof}

\begin{proposition}\label{nac}
Every solution $k_\e$ to \eqref{eq1dim} is strictly increasing and minimizes  the energy $E_\e$ in \eqref{1dene} 
among all smooth functions $h$ satisfying $h(\pm \infty) = \pm1$. 
\par
As a consequence,  the solution $h_\e$ provided by Proposition \ref{odsprop} is the unique  solution to \eqref{eq1dim} vanishing at zero.
\end{proposition}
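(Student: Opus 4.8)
The strict monotonicity of an arbitrary solution $k_\e$ of \eqref{eq1dim}, together with $E_\e(k_\e)<\infty$, is precisely Lemma \ref{lemmasi}; by \eqref{cinepot} the function $k_\e$ is moreover a strictly increasing $C^2$ diffeomorphism of $\R$ onto $(-1,1)$, with $k_\e'$ and $1-k_\e^2$ decaying exponentially. Two points remain: (a) that such a $k_\e$ minimizes $E_\e$ among all smooth $h$ with $h(\pm\infty)=\pm1$; and (b) that the odd solution $h_\e$ of Proposition \ref{odsprop} is the only solution vanishing at $0$.

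For (a) I would argue by calibration. Since truncating a competitor to take values in $[-1,1]$ does not increase $E_\e$ we may assume $|h|\le1$, and by a routine cut-off near infinity (the contributions there tend to $0$ because $h$ and $k_\e$ have finite energy) we may assume $h\equiv k_\e$ outside a compact interval, so that the integrations by parts below are legitimate. Using $\tfrac12(h')^2-\tfrac12(k_\e')^2=\tfrac12(h'-k_\e')^2+k_\e'(h'-k_\e')$ and the equation \eqref{eq1dim} for $k_\e$ one gets
\[
E_\e(h)-E_\e(k_\e)=\int_\R\tfrac12(h'-k_\e')^2\cosh^{n-1}\tau\,d\tau+\int_\R\big(W_\e(h)-W_\e(k_\e)-W_\e'(k_\e)(h-k_\e)\big)\cosh^{n-1}\tau\,d\tau .
\]
As $W$ is not convex the last integrand need not be nonnegative; the essential input is the stability of $k_\e$. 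Differentiating \eqref{eq1dim} yields
\[
-(k_\e')''-(n-1)\tanh\tau\,(k_\e')'+W_\e''(k_\e)\,k_\e'=(n-1)\,\mathrm{sech}^2\tau\;k_\e'\ \ge\ 0 ,
\]
so the positive function $k_\e'$ is a supersolution of the Jacobi operator of $E_\e$ at $k_\e$; the ground state substitution $\varphi=k_\e'\psi$ then gives $\int_\R(\varphi'^2+W_\e''(k_\e)\varphi^2)\cosh^{n-1}\tau\,d\tau\ge0$ for every admissible $\varphi$ (in fact $>0$ unless $\varphi\equiv0$, since the right-hand side above is strictly positive), i.e. the second variation is positive and $k_\e$ has no focal points along its graph. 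One then embeds $k_\e$ in a field of solutions of \eqref{eq1dim} filling the strip $\R\times(-1,1)$; the associated Weierstrass null-Lagrangian, which by the convexity of the Lagrangian of $E_\e$ in the velocity variable is pointwise dominated by it with equality exactly along $k_\e$, produces $E_\e(h)\ge E_\e(k_\e)$ for every admissible $h$, with equality only for $h\equiv k_\e$. Constructing this global field of extremals — equivalently, checking that the calibration extends over the whole strip notwithstanding the non-convexity of $W$ — is the step I expect to be the main obstacle; the exponential decay of $k_\e'$ and the favourable sign above are precisely what should make it go through.

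For (b), let $k_\e^{(1)},k_\e^{(2)}$ be solutions of \eqref{eq1dim} with $k_\e^{(i)}(0)=0$. By (a) both minimize $E_\e$, and the elementary identity
\[
E_\e\big(k_\e^{(1)}\vee k_\e^{(2)}\big)+E_\e\big(k_\e^{(1)}\wedge k_\e^{(2)}\big)=E_\e\big(k_\e^{(1)}\big)+E_\e\big(k_\e^{(2)}\big),
\]
valid because both the gradient density and the potential density are unchanged by the pointwise operations $\vee,\wedge$, combined with the minimality of $E_\e$, forces $k_\e^{(1)}\vee k_\e^{(2)}$ and $k_\e^{(1)}\wedge k_\e^{(2)}$ to be minimizers too, hence — by standard regularity for one-dimensional variational problems — smooth solutions of \eqref{eq1dim}. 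In particular $k_\e^{(1)}\vee k_\e^{(2)}$ is differentiable at $\tau=0$, where both $k_\e^{(i)}$ vanish; since the $k_\e^{(i)}$ are strictly increasing this is possible only if $(k_\e^{(1)})'(0)=(k_\e^{(2)})'(0)$, and ODE uniqueness for \eqref{eq1dim} then gives $k_\e^{(1)}\equiv k_\e^{(2)}$. Since the odd solution $h_\e$ of Proposition \ref{odsprop} satisfies $h_\e(0)=0$, it is the unique such solution.
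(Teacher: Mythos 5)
Your treatment of part (a) contains a genuine gap, and it is precisely the gap you flag yourself: stability (nonnegative second variation) is established by your ground-state/supersolution computation for $k_\e'$, but upgrading this to global minimality via a Weierstrass field of extremals requires actually foliating the strip $\R\times(-1,1)$ by graphs of solutions of \eqref{eq1dim}. Because the equation is not autonomous (the $\tanh\tau$ drift) you cannot obtain such a field by translating $k_\e$, and you would instead have to show that the one-parameter family of solutions obtained by varying, say, $k(0)$ fills the strip without focal points all the way out to $\pm\infty$; this is itself a nontrivial global statement, essentially as hard as the minimality you want to prove. The paper avoids the calibration machinery entirely: it first proves $k_\e$ is the \emph{unique} minimizer of $E_\e$ on every compact interval with respect to its own boundary values, by a phase-plane argument in the spirit of Renardy — if a distinct minimizer $j_\e$ existed, the maximal interval where $j_\e>k_\e$ would force the two trajectories to cross in the $(h,h')$-plane at some $t_1<t_2$, and comparing $j_\e''(t_1)$ with $k_\e''(t_2)$ via the ODE and the strict monotonicity of $\tanh\tau$ yields a contradiction. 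It then passes from compact intervals to the whole line by a cut-off argument using the finite energy of $k_\e$.

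Your part (b) is correct and in fact a clean alternative to the paper's, \emph{provided} (a) is established: the identity $E_\e(k_1\vee k_2)+E_\e(k_1\wedge k_2)=E_\e(k_1)+E_\e(k_2)$ (pointwise for both the gradient and potential densities), combined with minimality of $k_1,k_2$, forces $k_1\vee k_2$ to be a minimizer, hence $C^2$ by regularity of one-dimensional extremals; at $\tau=0$ where both vanish and both are strictly increasing, differentiability of $k_1\vee k_2$ forces $k_1'(0)=k_2'(0)$, and ODE uniqueness concludes. The paper instead glues $h_\e|_{\R^-}$ with $k_\e|_{\R^+}$ using the odd symmetry of $h_\e$ to equi-distribute its energy across $\R^\pm$; your version is more symmetric and does not rely on the oddness of $h_\e$, which is a modest gain, but both uses of (b) are downstream of (a). As it stands, your argument does not prove the proposition because (a) is not closed; the paper's phase-plane route is the step you would need to supply or replace.
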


\begin{proof}
In view of Lemma \ref{lemmasi} we have that $k_\e$ is strictly increasing and it has finite energy.  
Now we show that $k_\e$ is the unique energy minimizer in every compact interval $I\subset \R$, with respect to its own boundary values. We will use a contradiction argument similar to the one in the proof of \cite[Theorem 2.3]{R}. Assume that there exists an energy minimizer $j_\e \neq k_\e$  (with the same boundary  values),  set 
$
A_1 := \{ j_\e> k_\e\},$\,    $A_2 := \{ j_\e< k_\e\},
$  
and let us show that both these open sets are empty. We  show only that $A_1= \emptyset$, 
since $A_2= \emptyset$ can  be proved in the same way. If $A_1\neq \emptyset$, then  there exists  a maximal interval $I_1:= (\tau_1,\tau_2) \subseteq A_1 \subseteq I$. Since $I_1$ is maximal, we clearly have $j_\e(\tau_i) = k_\e(\tau_i)$ for $i=1,2$. By construction, and in view also of ODE uniqueness, we have 
\begin{equation}\label{ordinesf}
j_\e'(\tau_1)>   k_\e'(\tau_1) > 0, \qquad 
j_\e'(\tau_2) <   k_\e'(\tau_2).
\end{equation}
Set $\tau_{min}=\tau_2$ if $j_\e'>0$ in $(\tau_1,\tau_2)$, and otherwise we set $\tau_{min}$ to be the minimal $\tau\in (\tau_1,\tau_2)$ such that  $j_\e'(\tau) = 0$. Both in the case $\tau_{min}< \tau_2$ and $\tau_{min}=\tau_2$, in view of \eqref{ordinesf} we deduce that the trajectories corresponding to $j_\e$ and $k_\e$ cross each other in the phase space $(h,h')\in\R^2$, i.e., there exists $t_1, \, t_2 \in (\tau_1, \tau_{min})$ with 
\begin{equation}\label{cross}
(j_\e(t_1),j_\e'(t_1) ) = (k_\e(t_2),k_\e'(t_2) ),
\end{equation}
and   we may assume that 
$t_1$ and $t_2$ are the minimal times such that \eqref{cross} holds. By construction $j_\e$ and $k_\e$ are strictly increasing in $(\tau_1,\tau_{min})$, so that we can consider their inverse, and we have  $j_\e'(j_\e^{-1}(h))>k_\e'( k_\e^{-1}(h))$ for all $h\in (j_\e(\tau_1), j_\e(t_1)) = (k_\e(\tau_1), k_\e(\tau_2))$. Since 
$$
t_1 - \tau_1 = \int_{j_\e(\tau_1)}^{j_\e(t_1)} \frac{d h}{j_\e'(j_\e^{-1}(h))},
\qquad
t_2 - \tau_1 = \int_{k_\e(\tau_1)}^{k_\e(t_2)} \frac{d h}{k_\e'(k_\e^{-1}(h))},
$$ 
we deduce that $t_1 < t_2$.
In addition,   by construction we have 
$$
\frac{j_\e''(t_1)}{j_\e'(t_1)}  = \frac{d}{dh} j_\e'(j_\e^{-1}(h))|_{j_\e(t_1)} \le  \frac{d}{dh} k_\e'(k_\e^{-1}(h))|_{k_\e(t_2)} = \frac{k_\e''(t_2)}{k_\e'(t_2)} = \frac{k_\e''(t_2)}{j_\e'(t_1)}, 
$$
and hence $j_\e''(t_1)\le k_\e''(t_2).$ On the other hand,
equation \eqref{eq1dim} implies that
$$
j_\e''(t_1) = - (n-1)\tanh t_1 \, j_\e'(t_1) - f_\e(j_\e(t_1)) > -(n-1) \tanh t_2 \, k_\e'(t_2) - f_\e(k_\e(t_2))=
k_\e''(t_2),
$$
that together with  $j_\e''(t_1)\le k_\e''(t_2)$ provides a contradiction. This shows that $j_\e = k_\e$, and hence concludes the proof that $k_\e$ is the only energy minimizer in $I$ with respect to its own boundary values. 
\par
Now we show that $k_\e$ is an energy minimizer among all smooth functions $h$ such that $h({\pm\infty}) = \pm 1$. To this purpose, let $\f_m(\tau):= \f(\tau/m)$ be a sequence  of standard smooth cut-off functions, i.e., $\varphi \in C^\infty_0 (\mathbb{R})$, $0\le \f \le 1$, $\f \equiv 0$ for $|\tau|\ge 1$, $\f\equiv 1$ for $|\tau|\le 1/2$. Given any smooth $h_\e$ with finite energy such that $h_\e^{\pm\infty} = \pm 1$, 
we set $h_{\e,m} : = \f_m k_\e + (1-\f_m) h_\e$. Since $k_\e$ minimizes the energy in any interval $I_m:=(-m,m)$, we have 
\begin{equation}\label{2contr}
E_\e(k_\e,I_m) \le E_\e(h_{\e,m}, I_m) \le E_\e(h_{\e,m}, I_m\setminus I_{m/2})+  E_\e(h_{\e}),
\end{equation}
where $E_\e(h,J)$ denotes the integral on the set $J$ of the energy density of $h$ defined in  \eqref{1dene}. It is easy to check that, as $m\to\infty$,  $E_\e(h_{\e,m},I_m\setminus I_{m/2}) \to 0$ and, in view of  \eqref{2contr},  we easily conclude  $E_\e(k_\e)\le  E_\e(h_{\e})$, i.e., $k_\e$ is a minimizer. 
\par
Finally, we pass to the proof of the uniqueness of $h_\e$ as given by Proposition \ref{odsprop}. To this purpose let $k_\e$ be a solution  to \eqref{eq1dim} vanishing at zero, and let us prove that $h_\e = k_\e$.  Notice that, in view of the previous part, both $h_\e$ and $k_\e$ are energy minimizer, hence, by standard odd reflection arguments, we have
$$
E_\e(h_\e,\R^-)=E_\e(h_\e,\R_+)=E_\e(k_\e,\R^-)=E_\e(h_\e,\R_+).
$$
Therefore  also the function $j_\e$ defined as $h_\e$ in $\R^-$ and as $k_\e$ on $\R_+$ is an energy minimizer. Thus, $j_\e$ satisfies \eqref{eq1dim} and by standard ODE regularity we deduce that $h_\e$ and $k_\e$ have same derivative at the origin, therefore they coincide by ODE uniqueness. 
\end{proof}

\subsection{Existence and uniqueness of elementary solutions of the PDE}\label{odsthmss} 
We are in a position to prove Theorem \ref{odsthm}. 
It is clear by our construction that, for $\Sigma_0 = \{x_1=0\}$ in the half-space model,  the function $U_\e(x):= h_\e(\tilde d(x,\Sigma_0))$ is a solution to equation \eqref{equa} with boundary conditions as $x_n\searrow 0$ given by $U_\e(x) = \text{ sgn}(x_1)$. Clearly, such solution can be viewed in the Poincar\'e ball model, and the corresponding boundary conditions are given by  $U_\e(x) = \pm 1 $ on two disjoint half spheres of the sphere  at infinity  $\sinf$.
\par
In the general case, we set $U_\e(x)=h_\e (\tilde d (x, \Sigma))$ and we may assume $\Sigma = T(\Sigma_0)$ for some hyperbolic isometry $T$. 
By definition of $U_\e$ we have
$$
U_\e(T(x))= h_\e( \tilde d(T(x),\Sigma)) =  h_\e( \tilde d(T(x),T (\Sigma_0))) =h_\e( \tilde d(x, \Sigma_0)).
$$ 
Since equation \eqref{equa} is invariant under isometries, we conclude that  $U_\e(x)$ is a solution of equation \eqref{equa}, and by construction it clearly satisfies the desired boundary conditions.  
Moreover,  as a direct consequence of Proposition \ref{odsprop} and of the previous equalities, we also deduce that $U_\e\equiv 0$ on $\Sigma$, and $U_\e(x) \to \text{ sgn}( \tilde d(x,\Sigma))$  locally uniformly in $B_1\setminus \Sigma$ as $\e\to 0$. 
\par
Let us pass to the proof of the minimality property of $U_\e$.  We will work in the half space model, and since the local minimality property  also is invariant under isometries, we may assume without loss of generality  $\Sigma = \Sigma_0=\{x_1=0\}$.  Our proof is based on a uniqueness argument and a sliding technique, inspired by the two dimensional analysis done in \cite{CGS} 
\par
To prove the minimality of $U_\e$ in any regular open set $A\subset\subset \R_+^n$, we will  prove  indeed that   any solution $\tilde U_\e$ of equation  \eqref{equa} in $A$, with $-1\le \tilde U_\e \le 1$ and coinciding with $U_\e$ on $\partial A$,  is in fact equal to $U_\e$. This is  enough to conclude since in an open set $A$ any minimizer  is clearly a solution.
\par
Note that, by standard truncation arguments,  any local minimizer $\tilde U_\e$ satisfies $|U_\e|\le 1$, and indeed $|\tilde U_\e|<1$ in $\overline A$ by standard maximum principles.  Therefore, it remains  to  prove that $\tilde U_\e  = U_\e$. We will prove the inequality $\tilde U_\e  \le U_\e$, the other inequality being analogous. 
\par
Since $h_\e$ is increasing, we deduce by construction that also $U_\e$ is increasing with respect to $x_1$. As a consequence, we have that the functions $U_{\e,\tau}(\, \cdot \,):= U_{\e}(\, \cdot \, + \tau e_1)$ are well ordered, i.e., $\tau_1 < \tau_2 \Rightarrow U_{\e,\tau_1} < U_{\e,\tau_2}$, and $U_{\e,\tau}\to \pm 1$ uniformly in $\overline A$ as $\tau \to \pm \infty$. Since $-1<\tilde U_\e < 1$ in $\overline A$, we have that $\tilde U_\e < U_{\e,\tau}$ for $\tau$ large enough. By continuity there exists  a minimum $\tau\in \R$, denoted by  $\tau_{min}$,   satisfying $\tilde U_\e \le U_{\e,\tau}$ in $\overline A$. Clearly we have $\tau_{min}\ge 0$ because of the values at the boundary. If $\tau_{min}>0$,  then there exists $x\in  A$ with $\tilde U_\e(x) = U_{\e,\tau_{min}}(x)$, but since $U_{\e,\tau_{min}}$ is also a solutions, this is in contradiction with  standard maximum principles (see for instance \cite[Lemma 2.3]{CGS}).  Therefore we have $\tau_{min}=0$, and hence  $\tilde U_\e \le U_{\e}$. Arguing similarly we also get $U_\e\ge \tilde{U}_\e$, whence $U_\e\equiv \tilde{U}_\e$ and  $U_\e$ is a local minimizer. 
\par
Finally, let $u_\e$ be a solution to \eqref{equa} satisfying the same boundary conditions of  $U_\e$ on $\sinf$. According to \cite[Theorem 3.5]{BM}, $u_\e$ is indeed one-dimensional, i.e., $u_\e(x)= k_\e(\tilde d(x,\Sigma))$ for a suitable $k_\e$ solving problem \ref{eq1dim}. Since $u_\e \equiv 0 $ on $\Sigma$, we have $k_\e(0)=0$. By Proposition \ref{nac} we infer $h_\e = k_\e$, so that $u_\e = U_\e$, that concludes the proof of Theorem \ref{odsthm}.

\begin{oss}
It would be interesting to know whether the uniqueness statement in Theorem \ref{odsthm} still holds without the assumption   $U_\e \equiv 0 $ on $\Sigma$. 
In light of \cite[Theorem 3.5]{BM}, this uniqueness property  is indeed equivalent to the uniqueness of the solution $h$ of \eqref{eq1dim}. We have proved this uniqueness property in Proposition \ref{nac} only under the  additional assumption $h(0)=0$ which corresponds to $U_\e \equiv 0$ on $\Sigma$. 
Finally, we notice that the uniqueness property is know to fail in the Euclidean context because of the translation invariance of the equation. On the other hand,   in the hyperbolic space, due to the presence of the weight $\cosh^{n-1} \tau$ in the energy functional \eqref{1dene}  there is no translation invariance, and since the weight is increasing in $|\tau|$,  it seems very likely that the minimizer vanishes at zero. 
\end{oss}

\section{Multidimensional phase transitions}\label{mpt}
In this section we will construct our minimizing phase transitions in  hyperbolic space with prescribed boundary value at infinity. More precisely,  we will construct global solutions for the equation \eqref{equa} that are local minimizer of the energy functional \eqref{enfun}, and satisfying the prescribed boundary conditions $u=\pm 1$ on given open subsets $\Om^+$, $\Om^-\subset \sinf$.
\par
As a building block, we will use the one dimensional  solutions obtained in Section \ref{odsSec} to construct barriers $\underline\psi_\e$, $\overline \psi_\e$, defined  as  the supremum and the infimum, respectively,  of  suitable one dimensional solutions. Such barriers, in view of the inequality $\underline \psi_\e \le u_\e \le \overline \psi_\e$,   will be used to control the behaviour of the solution $u_\e$ at infinity. We adopt a strategy similar to the one suggested in \cite{GJ} and used in \cite{JMZH} in  dimension two. Thus, we construct $u_\e$ as the limit of energy minimizers $u_{\e,R}$ defined on a family of exhausting subdomains, and with free boundary value between $\underline \psi_\e$ and $\overline \psi_\e$. In view of comparison principles, we show that the inequality $\underline \psi_\e\le u_{\e,R}\le \overline \psi_\e$ holds also in the interior of each subdomain, and it yields in the limit $\underline \psi_\e\le u_{\e}\le \overline \psi_\e$ in the whole $\Hyp^n$, ensuring in this way that 
$u_\e$ attains the desired boundary values at infinity.
\par 
As in Theorem \ref{odsthm}, let $C^+$ and $C^-$ be disjoint open spherical caps in $\sinf$ (in the Poincar\'e ball model) with common boundary $L$. The sets $C^+$ and $C^-$ can be equivalently described as  $C^+= I_{r^+}(p^+)$, $C^-= I_{r^-}(p^-)$, for suitable antipodal point $p^{\pm}\in \sinf$ and suitable radii $r^{\pm}$ with $r^+ + r^- = \pi$,  where $I_r(p)$ denotes the ball of radius $r$ and center $p\in \sinf$ with respect to the standard Riemannian distance on the sphere. Moreover, whenever $r^+\neq r^-\neq \pi/2$, the sets $C^+$ and $C^-$ uniquely  determine (and at the same time they are determined by) a unique Euclidean ball $B$ (actually a half space in the limiting case $r^+= r^- =\pi/2$). Indeed, let  $C_{min}$ be the smallest spherical cap between $C^+$ and $C^-$. Then  there exists a unique Euclidean ball $B$ such that
 $B\cap \sinf = C_{min}$  and $\Sigma:= \partial B\cap B_1$ touches $\partial B_1$ orthogonally along $L=\partial \Sigma$. 
\par
In our construction of the solution we will use the signed distance function $\tilde d(x,\Sigma)$ from the set $\Sigma$ defined above,  with the convention  $\tilde d(x,\Sigma) \to \pm \infty$ as $x\to C^{\pm}$ respectively.
\par
Note that when $r^+ = r^- = \pi/2$, the corresponding $\Sigma$ is a $(n-1)$-dimensional disk; e.g.,
$\Sigma := \Sigma_0 = \{x_n= 0\}$. Moreover, all the sets $\Sigma$'s are isometrically equivalent, and hence in particular there they are isometrically equivalent to $\Sigma_0$.

\subsection{Sub-solutions and super-solutions}
In this part we will define  suitable barriers for the solution to equation \eqref{equa} which we will construct in the next paragraph. The idea here is to combine one dimensional solutions provided in Theorem \ref{odsthm}, corresponding to two families of  spherical caps, exhausting   the open sets $\Om^+$ and $\Om^-$ respectively. 
\par The following lemma establishes a monotonicity property for the family of one dimensional solutions.
\begin{lemma}\label{sior}
Let $\{C_1^+,C_1^-\}$ and $\{C_2^+,C_2^-\}$ be two pairs of spherical caps in $\sinf$, and let $U_\e^1$ and $U_\e^2$ be the corresponding one dimensional solutions given by Theorem \ref{odsthm}.
\par
Then we have $U_\e^1\le U_\e^2$ if and only if $ C_1^+\subseteq C_2^+$. Moreover we have strict inequality $U_\e^1(x) < U_\e^2(x)$ for every $x\in \Hyp^n$   whenever  the inclusion    
$C_1^+\subset C_2^+$ is strict.
\end{lemma}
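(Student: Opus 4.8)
The statement concerns a monotonicity property: for two pairs of spherical caps, the associated one-dimensional solutions are ordered $U_\e^1 \le U_\e^2$ iff $C_1^+ \subseteq C_2^+$. Let me sketch how I would prove this.

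First, reduce to the half-space model, where each $U_\e^i$ has the explicit form $U_\e^i(x) = h_\e(\tilde d(x, \Sigma_i))$ with $h_\e$ odd increasing and $\Sigma_i$ the totally geodesic hypersurface asymptotic to $L_i = \partial C_i^\pm$. The sign convention on $\tilde d$ is tied to which cap is $C^+$: $\tilde d(\cdot, \Sigma_i) \to +\infty$ toward $C_i^+$. The key observation is that $h_\e$ is a fixed strictly increasing function, so the ordering $U_\e^1 \le U_\e^2$ is equivalent to $\tilde d(x, \Sigma_1) \le \tilde d(x, \Sigma_2)$ for all $x \in \Hyp^n$, where both signed distances use the $C^+$-convention. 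So everything reduces to a purely geometric statement about ordering of signed distance functions from two totally geodesic hyperplanes.

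Now I would prove the geometric fact. If $C_1^+ \subseteq C_2^+$, the complementary caps satisfy $C_2^- \subseteq C_1^-$, and the boundaries $L_1, L_2$ together with the half-space orientation arrange the hypersurfaces $\Sigma_1, \Sigma_2$ so that $\Sigma_1$ separates $C_1^-$ from $\Sigma_2 \cup C_1^+$; that is, moving from the $C^-$ side to the $C^+$ side one crosses $\Sigma_1$ before $\Sigma_2$. This translates to $\tilde d(x,\Sigma_1) \le \tilde d(x, \Sigma_2)$ everywhere, with equality only when $\Sigma_1 = \Sigma_2$, i.e. $C_1^+ = C_2^+$. A clean way to make this rigorous is to apply a hyperbolic isometry sending $\Sigma_1$ to the standard $\Sigma_0 = \{x_n = 0\}$ (in a rotated half-space model, say with $C_1^\pm$ becoming the half-spheres $\{\pm x_1 > 0\}$ on the boundary); then $\tilde d(x,\Sigma_1) = \sinh^{-1}(x_1/x_n)$, and $\Sigma_2$, being totally geodesic with $C_2^+ \supseteq C_1^+$, is a hemisphere or hyperplane that lies on the $C_1^+$-side wherever it is not equal to $\Sigma_1$; comparing the two signed distance functions pointwise then becomes an elementary computation (e.g. comparing $x_1$ with the signed distance to a shifted hemisphere through an explicit formula, or invoking that the region $\{\tilde d(\cdot,\Sigma_2) > t\}$ is a ``half-space'' bounded by the equidistant hypersurface, which contains $\{\tilde d(\cdot,\Sigma_1) > t\}$). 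The strict inequality on all of $\Hyp^n$ when the inclusion is strict follows because two distinct totally geodesic hyperplanes that are nested at infinity are disjoint in $\Hyp^n$ (they do not intersect), so $\tilde d(\cdot,\Sigma_1) - \tilde d(\cdot,\Sigma_2)$ never vanishes; alternatively, invoke the strong maximum principle, since both $U_\e^i$ solve \eqref{equa} and an interior contact point would force $U_\e^1 \equiv U_\e^2$, hence $\Sigma_1 = \Sigma_2$.

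For the converse, suppose $C_1^+ \not\subseteq C_2^+$. Then there is a boundary point in $C_1^+ \setminus C_2^+ \subseteq C_1^+ \cap \overline{C_2^-}$; approaching it along a geodesic ray one has $U_\e^1 \to +1$ while $U_\e^2 \to -1$ (or, if the point is on $L_2$, $U_\e^2$ stays bounded away from $+1$), so $U_\e^1 \le U_\e^2$ fails near infinity. This uses only the prescribed boundary behavior of the one-dimensional solutions from Theorem \ref{odsthm}. The main obstacle is the geometric lemma on nesting of signed distance functions of totally geodesic hyperplanes; once that is in hand, monotonicity of $h_\e$ does the rest, and the strict-inequality clause is a short maximum-principle argument.
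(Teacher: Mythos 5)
Your proposal is correct and follows essentially the same route as the paper's proof: both directions reduce to the monotonicity of $h_\e$ together with the comparison $\tilde d(\cdot,\Sigma_1)\le\tilde d(\cdot,\Sigma_2)$ of signed distances to nested totally geodesic hypersurfaces (and, for the ``only if'' direction, the prescribed boundary values at infinity). The paper simply asserts the distance comparison; your extra detail---reducing to $\Sigma_0=\{x_n=0\}$ via an isometry, or quoting the strong maximum principle for the strict inequality---is a legitimate way to make that geometric step rigorous, but is not a genuinely different argument.
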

\begin{proof}
Assume $U_\e^1\le U_\e^2$ and  let $p\in    C_1^+$. Since $U_\e^2\le 1$, we have  $ 1\leq \lim_{x\to p} U_\e^1 \le \lim_{x\to p} U_\e^2 \leq 1$, i.e., $x\in C^+_2$.   We conclude that $C_1^+\subseteq C_2^+$. 
\par
Now, Let $\Sigma_1$ and $\Sigma_2$ be the zero level sets of $U_1$ and $U_2$ respectively, so that $U_\e^1 (x) = h_\e(\tilde d (x,\Sigma_1))$, $U_\e^2 (x) = h_\e(\tilde d (x,\Sigma_2))$.  

If $C^+_1\subseteq C^+_2$, then $d(\cdot,\Sigma_1) \le d(\cdot,\Sigma_2)$, 
and the inequality is strict whenever the inclusion is strict. Since $h_\e$ is strictly increasing the conclusion follows.  
\end{proof}

Another useful property of one dimensional solutions is that they are essentially closed under uniform 
convergence of compact sets. More precisely we have the following lemma 

\begin{lemma}\label{odr}
Let $U_\e^m$ be one dimensional solutions, corresponding to pairs of spherical caps $C_m^+$, 
$C_m^-$ with common boundary.
Up to a subsequence, we have $U_\e^m\to U_\e$ locally uniformly, for  some  solution  $U_\e$ of equation 
\eqref{equa}. Moreover,   either $U_\e \equiv \pm 1$ or it is a one dimensional solution corresponding to some spherical caps $C^+$, $C^-$.
\end{lemma}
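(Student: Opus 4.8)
The plan is to get compactness and the equation for the limit from standard interior elliptic estimates, and then to identify the limit by tracking the underlying totally geodesic hypersurfaces $\Sigma_m$. For the first part, each $U_\e^m$ solves \eqref{equa} with $|U_\e^m|\le 1$, so in local coordinates it satisfies a uniformly elliptic equation whose zero-order term $f_\e(U_\e^m)$ is uniformly bounded (since $f_\e\in C^1$ and $|U_\e^m|\le1$); interior Schauder estimates then yield $m$-independent $C^{2,\alpha}$ bounds on compact subsets of $\Hyp^n$, and Arzel\`a--Ascoli together with a diagonal argument produce a subsequence converging in $C^2_{loc}(\Hyp^n)$ to a function $U_\e$ with $|U_\e|\le1$ solving \eqref{equa}. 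The remaining, and main, point is to show $U_\e$ is either constant $\pm1$ or one-dimensional.

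The key is the parametrisation recalled at the start of Section~\ref{mpt}: a pair $\{C_m^+,C_m^-\}$ with common boundary corresponds to an oriented hyperplane section of $S^{n-1}$, i.e. to a pair $(\nu_m,t_m)\in S^{n-1}\times(-1,1)$, with $L_m=\{y\in S^{n-1}:y\cdot\nu_m=t_m\}$, $C_m^{\pm}=\{y\in S^{n-1}:\pm(y\cdot\nu_m-t_m)>0\}$, $\Sigma_m$ the totally geodesic hypersurface asymptotic to $L_m$, and $U_\e^m(x)=h_\e(\tilde d(x,\Sigma_m))$ with the sign convention of Theorem~\ref{odsthm}. I would then pass to a further subsequence with $\nu_m\to\nu\in S^{n-1}$ and $t_m\to t\in[-1,1]$, and distinguish the nondegenerate case $|t|<1$ from the degenerate cases $t=\pm1$.

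If $|t|<1$, the slice $\{y\cdot\nu=t\}$ still meets $S^{n-1}$ transversally, so $L_m\to L:=\{y\in S^{n-1}:y\cdot\nu=t\}$ and $\Sigma_m$ converges to the totally geodesic hypersurface $\Sigma=\Sigma(L)$, with compatible orientations because $C_m^+\to\{y\cdot\nu>t\}$; hence $\tilde d(\cdot,\Sigma_m)\to\tilde d(\cdot,\Sigma)$ locally uniformly and, $h_\e$ being continuous, $U_\e^m\to h_\e(\tilde d(\cdot,\Sigma))$, which by uniqueness of limits equals $U_\e$, i.e. the one-dimensional solution associated with the caps $C^{\pm}$. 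If $t=1$ (resp. $t=-1$) the cap $C_m^+$ (resp. $C_m^-$) has angular radius $\arccos|t_m|\to0$, so the Euclidean ball $B_m$ defining it (with $B_m\cap S^{n-1}=C_m^+$ and $\partial B_m\perp S^{n-1}$) shrinks to the point $\nu$, and $\Sigma_m=\partial B_m\cap B_1$ is contained in a Euclidean neighbourhood of $\nu$ of vanishing radius; since the hyperbolic distance from any fixed compact set to such a neighbourhood diverges and interior points eventually sit on the $C_m^-$ (resp. $C_m^+$) side, $\tilde d(\cdot,\Sigma_m)\to-\infty$ (resp. $+\infty$) locally uniformly, whence $U_\e^m\to h_\e(\mp\infty)$, i.e. $U_\e\equiv\mp1$.

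The \textbf{main obstacle} is justifying, in the nondegenerate regime, that the signed distance depends continuously on the hypersurface, so that $\tilde d(\cdot,\Sigma_m)\to\tilde d(\cdot,\Sigma)$ locally uniformly. I would handle this either through an explicit formula for $\tilde d(x,\Sigma)$ in the ball model in terms of $(\nu,t)$, or by writing $\Sigma_m=T_m(\Sigma_0)$ for hyperbolic isometries $T_m$ converging to an isometry $T$ with $T(\Sigma_0)=\Sigma$ (which is possible precisely because the limiting hyperplane is nondegenerate) and using $\tilde d(x,\Sigma_m)=\tilde d(T_m^{-1}x,\Sigma_0)$ together with the explicit expression $\tilde d(x,\Sigma_0)=\sinh^{-1}(x_1/x_n)$ from Section~\ref{ods}. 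The remaining sign/orientation bookkeeping is a routine check, immediate once one observes that $C_m^+\to C^+$ along the chosen subsequence.
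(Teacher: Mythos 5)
Your proposal is correct and, at its core, follows the same route as the paper: reduce the question to the behaviour of the totally geodesic hypersurfaces $\Sigma_m$, show that the signed distances $\tilde d(\cdot,\Sigma_m)$ converge locally uniformly either to $\tilde d(\cdot,\Sigma)$ (when the limit cap pair is nondegenerate) or to $\pm\infty$ (when the caps degenerate), and conclude via continuity of $h_\e$. Your second suggested way to handle the ``main obstacle'' --- writing $\Sigma_m=T_m(\Sigma_0)$ for isometries $T_m$ converging either to an isometry $T$ or degenerating to a constant boundary point --- is exactly the device the paper uses (after a preliminary reduction to concentric caps, which makes the $T_m$ pure dilations in the half-space model); the initial elliptic-estimates step in your write-up is sound but unnecessary, since uniform convergence of the explicit formula $h_\e(\tilde d(\cdot,\Sigma_m))$ already identifies the limit and shows it solves \eqref{equa}.
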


\begin{proof}
Since the compactness property of $U_\e^m$ is clearly invariant by composing  $U^m_\e$ with a convergent sequence of  isometries, we may assume without loss of generality  
that  $C_m^+$ and $C_m^-$ are concentric. 
Let $\Sigma_\e^m$ be the spherical caps corresponding to the  zero level sets of  $U_\e^m$.  
Let  $T_m$ be the hyperbolic isometries mapping $\Sigma_m$ into $\Sigma_0 := \{x_n=0\}$, corresponding to pure dilations in the half-space model. Up to a subsequence, we have that either $T_m$ converge locally uniformly to some limit isometry $T$, or  $T_m$ converges locally uniformly to the constant map $T(x) \equiv p$, where $p$ is a center of the  concentric caps $C_m^+$, $C_m^-$.  
\par
In the first case, set $\Sigma:= T^{-1}(\Sigma_0)$, 
 $$U_\e(x):= h_\e(\tilde d(x, \Sigma))= h_\e(\tilde d(T x, T \Sigma))=
 h_\e(\tilde d(T x, \Sigma_0)),
 $$ 
 and $C^+$ and $C^-$ the corresponding spherical caps. Since  $h_\e$ is continuous and $T_m$ converges to $T$ locally uniformly and 
$$
U_\e^m(x)= h_\e(\tilde d(x, \Sigma_m)) = h_\e(\tilde d(x, T_m^{-1}\Sigma_0)) = 
h_\e(\tilde d(T_m x, \Sigma_0)),
$$ 
 we deduce that $U_\e^m$ converges locally uniformly to the function $U_\e$, that has all the desired properties.   
\par
Finally, in the second case we  have $\Sigma_m\cap K=\emptyset$ for every compact set  $K\subset \Hyp^n$ and $m$ large enough. Then, it is easy to see that $\tilde d(x,\Sigma_m) \to \pm \infty$ locally uniformly in $\Hyp^n$, hence  $U_\e^m \to \pm 1$ locally uniformly. 
\end{proof}

Now we will construct the barriers $\underline \psi_\e$, $\overline \psi_\e$. Let $\Om^+$, $\Om^-$  
be disjoint open subset of $\sinf$, and let $(C^+, C^-)$ denote any pair of  disjoint spherical caps in $\sinf$ with common boundary. We set
\begin{equation}\label{fpfm}
\F^+:= \{ (C^+, C^-): \, C^+ \subset \Om^+\},\qquad \F^-:= \{ (C^+, C^-): \, C^- \subset \Om^-\}.
 \end{equation}
Given a pair $(C^+,C^-)$, the corresponding one dimensional solution provided by Theorem \ref{odsthm} will be denoted by 
$U_\e^{C^+,C^-}$. Finally, for every $x\in \Hyp^n$ we set 
\begin{equation}\label{barriers}
\underline \psi_\e(x): = \sup \{U_\e^{C^+,C^-}(x), \, (C^+,C^-) \in \F^+ \}, \quad  \overline \psi_\e(x)
: = \inf \{U_\e^{C^+,C^-}(x), \, (C^+,C^-) \in \F^- \}.
\end{equation}
In the next proposition we summarize some properties satisfied by the barriers just introduced.

\begin{proposition}\label{psitocca}
The barriers $\underline \psi_\e$, $\overline \psi_\e$ defined in \eqref{barriers}  are  Lipschitz in $\Hyp^n$ with respect to the hyperbolic metric. In addition, we have
$-1< \underline \psi_\e(x) \le \overline \psi_\e(x) <1$ for all $x\in \Hyp^n$. Moreover  
for every $p^{\pm} \in \Om^{\pm}$ we have $\lim_{x\to p^+} \underline\psi_\e(x) = 1$ and 
$\lim_{x\to p^-} \overline\psi_\e(x) = -1$.
\par
Finally, either 
$\underline \psi_\e(x) < \overline \psi_\e(x) $ for every $x\in \Hyp^n$, or $\underline \psi_\e \equiv  \overline \psi_\e = U_\e$ for some one-dimensional solution $U_\e$,  and $\Om^+$  and $\Om^-$ are open disjoint spherical caps with common boundary.   
\end{proposition}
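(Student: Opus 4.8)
The plan is to verify the five assertions of the proposition more or less in the order listed, the dichotomy at the end being the one that requires real work.

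\textit{Lipschitz regularity and the bounds $-1<\underline\psi_\e$, $\overline\psi_\e<1$.} Every competitor $U_\e^{C^+,C^-}(x)=h_\e(\tilde d(x,\Sigma))$ is the composition of the fixed $C^1$ odd increasing profile $h_\e$ from Proposition \ref{odsprop} with the signed distance $\tilde d(\cdot,\Sigma)$, which is $1$-Lipschitz for the hyperbolic metric. Since $(h_\e')^2\cosh^{n-1}\tau$ is bounded (this is \eqref{cinepot} applied to the solution $h_\e$), $\|h_\e'\|_\infty<\infty$, so each $U_\e^{C^+,C^-}$ is $\|h_\e'\|_\infty$-Lipschitz with a constant independent of $(C^+,C^-)$; a supremum, resp.\ infimum, of an equi-Lipschitz family of $[-1,1]$-valued functions is Lipschitz with the same constant, giving the regularity of $\underline\psi_\e,\overline\psi_\e$. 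Moreover $h_\e$ being strictly increasing with $h_\e(\pm\infty)=\pm1$ forces $U_\e^{C^+,C^-}(x)\in(-1,1)$ for every $x$, whence $\underline\psi_\e(x)>-1$ and $\overline\psi_\e(x)<1$ (here one assumes, as one should, $\Om^\pm\neq\emptyset$ so that $\F^\pm\neq\emptyset$; the remaining inequalities $\underline\psi_\e<1$, $\overline\psi_\e>-1$ then follow once $\underline\psi_\e\le\overline\psi_\e$ is proved).

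\textit{The ordering $\underline\psi_\e\le\overline\psi_\e$.} Fix $(C_1^+,C_1^-)\in\F^+$ and $(C_2^+,C_2^-)\in\F^-$. Then $C_1^+\subset\Om^+$ and $C_2^-\subset\Om^-$ are disjoint open geodesic balls $I_{\rho_1}(q_1)$, $I_{\rho_2}(q_2)$ on $\sinf$, so disjointness forces $d(q_1,q_2)\ge\rho_1+\rho_2$, and the strict triangle inequality gives $d(x,q_2)>\rho_2$ for every $x\in C_1^+$, i.e.\ $C_1^+\subseteq\sinf\setminus\overline{C_2^-}=C_2^+$. By Lemma \ref{sior}, $U_\e^{C_1^+,C_1^-}\le U_\e^{C_2^+,C_2^-}$, and taking the supremum over $\F^+$ and the infimum over $\F^-$ yields $\underline\psi_\e\le\overline\psi_\e$ on $\Hyp^n$. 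For the boundary behaviour: given $p^+\in\Om^+$, openness provides a small cap $C^+=I_\rho(p^+)\subset\Om^+$, so $(C^+,C^-)\in\F^+$ for the complementary cap $C^-$; since $p^+\in C^+$ we have $U_\e^{C^+,C^-}\to1$ at $p^+$, hence $\liminf_{x\to p^+}\underline\psi_\e\ge1$, while $\underline\psi_\e\le1$ (a supremum of functions $<1$) gives $\limsup\le1$, so $\lim_{x\to p^+}\underline\psi_\e=1$; the statement for $\overline\psi_\e$ at $p^-\in\Om^-$ is symmetric.

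\textit{The dichotomy.} Suppose $\underline\psi_\e(x_0)=\overline\psi_\e(x_0)=:c$ at some $x_0\in\Hyp^n$; note $c\in(-1,1)$ by the bounds above. Choose $(C_m^+,C_m^-)\in\F^+$ with $U_\e^{C_m^+,C_m^-}(x_0)\to c$ and $(D_m^+,D_m^-)\in\F^-$ with $U_\e^{D_m^+,D_m^-}(x_0)\to c$, and apply Lemma \ref{odr}: along a subsequence $U_\e^{C_m^+,C_m^-}\to V_\e$ and $U_\e^{D_m^+,D_m^-}\to\tilde V_\e$ locally uniformly, each limit being $\pm1$ or a one-dimensional solution; since $V_\e(x_0)=\tilde V_\e(x_0)=c\in(-1,1)$, neither limit is constant, so both are genuine one-dimensional solutions of \eqref{equa}. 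From \eqref{barriers}, $U_\e^{C_m^+,C_m^-}\le\underline\psi_\e\le\overline\psi_\e\le U_\e^{D_m^+,D_m^-}$ everywhere on $\Hyp^n$, so in the limit $V_\e\le\underline\psi_\e\le\overline\psi_\e\le\tilde V_\e$ with equality at $x_0$. Hence $w:=\tilde V_\e-V_\e\ge0$ solves a linear elliptic equation $\Delta_g w+c_\e(x)w=0$ ($c_\e$ bounded, obtained by writing $f_\e(\tilde V_\e)-f_\e(V_\e)=c_\e w$) and attains an interior minimum $w(x_0)=0$, so the strong maximum principle forces $w\equiv0$; thus $V_\e\equiv\tilde V_\e=:U_\e$, a one-dimensional solution attached to a pair of caps $(C^+,C^-)$ with common boundary, and the sandwich gives $\underline\psi_\e\equiv\overline\psi_\e\equiv U_\e$. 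Finally, by the boundary step $U_\e\to1$ at every $p^+\in\Om^+$, which is possible only if $p^+\in C^+$, so $\Om^+\subseteq C^+$, and symmetrically $\Om^-\subseteq C^-$; conversely $U_\e=\sup_{\F^+}U_\e^{\cdot}$ with Lemma \ref{sior} gives $C_1^+\subseteq C^+$ for every cap $C_1^+\subset\Om^+$, so $\Om^+\subseteq C^+$ is dense in it (and likewise $\Om^-$ in $C^-$, since the infimum defining $\overline\psi_\e$ is realized in the limit), identifying $\Om^+$, $\Om^-$ as the complementary open caps $C^\pm$ with common boundary.

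The main obstacle is exactly this last step: upgrading a single interior contact point $\underline\psi_\e(x_0)=\overline\psi_\e(x_0)$ into global rigidity. The two facts that make it work are the compactness of one-dimensional solutions (Lemma \ref{odr}), which produces honest solutions $V_\e\le\tilde V_\e$ pinching the barriers and touching at $x_0$, and the strong maximum principle applied to their difference; the observation that the touching value $c$ lies strictly inside $(-1,1)$ is precisely what prevents the limiting one-dimensional solutions from degenerating to the constants $\pm1$.
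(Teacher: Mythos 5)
Your argument is correct, and the first three parts (Lipschitz bound, $-1<\underline\psi_\e\le\overline\psi_\e<1$, boundary behaviour) follow the same route as the paper, merely fleshing out the observation that any $(C_1^+,C_1^-)\in\F^+$ and $(C_2^+,C_2^-)\in\F^-$ satisfy $C_1^+\subseteq C_2^+$ so that Lemma \ref{sior} gives the ordering. Where you diverge genuinely is the dichotomy. The paper's mechanism is Lemma \ref{sior} alone, applied in its strict form: having produced one-dimensional solutions $U^1_\e\le U^2_\e$ with $U^1_\e(x_0)=U^2_\e(x_0)$, the contrapositive of ``strict inclusion $C_1^+\subset C_2^+$ implies strict inequality everywhere'' forces $C_1^+=C_2^+$, hence $U^1_\e\equiv U^2_\e$. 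You instead pass to the difference $w=\tilde V_\e-V_\e\ge0$, linearize the semilinear equation to get $\Delta_g w+c_\e(x)w=0$ with $c_\e$ bounded, and invoke the strong maximum principle at the interior zero. This is valid (the usual splitting $c_\e=c_\e^+-c_\e^-$ reduces to a supersolution of $\Delta_g-c_\e^-$, so sign of $c_\e$ is irrelevant), and it buys you a proof that sidesteps the geometry of caps entirely once you have two ordered touching solutions; the paper's Lemma-\ref{sior}-based route is more self-contained since it re-uses the monotonicity already proved and needs no maximum-principle machinery.

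One step in your last sentence is not quite right. From $U_\e=\sup_{\F^+}U_\e^{C^+,C^-}$ and Lemma \ref{sior} you only get $C_1^+\subseteq C^+$ for every admissible cap, i.e.\ $\Om^+\subseteq C^+$, which you had already obtained from the boundary conditions; this does \emph{not} by itself show $\Om^+$ is dense in $C^+$, and even density of an open $\Om^+$ inside the open cap $C^+$ would not force $\Om^+=C^+$. The missing piece is the Kuratowski convergence of the caps $C_m^+\subset\Om^+$ to $C^+$ coming from Lemma \ref{odr}: every compact subset of $C^+$ is eventually contained in some $C_m^+\subset\Om^+$, whence $C^+\subseteq\Om^+$ and therefore $\Om^+=C^+$ (and symmetrically $\Om^-=C^-$). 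Admittedly the paper's own proof stops even earlier, declaring the proof complete after establishing $U^1_\e\equiv\underline\psi_\e\equiv\overline\psi_\e\equiv U^2_\e$, so this last identification is left implicit there too; but since your write-up attempts it, you should make the exhaustion argument explicit rather than hiding it in ``is dense in it.''
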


\begin{proof}
Given $\Sigma\subset \Hyp^n$, we clearly have that the signed distance $\tilde d(x,\Sigma)$ is $1$-Lipschitz with respect to the hyperbolic distance. Therefore, since the function $h_\e$ is Lipschitz, we deduce that the one dimensional solutions $U_\e(x) = h_\e(\tilde d(x,\Sigma))$ are Lipschitz in the hyperbolic space, with Lipschitz constant independent of $\Sigma$.
Therefore, passing to the supremum and the infimum respectively, we deduce that the same property is inherited by   $\underline \psi_\e$ and $ \overline \psi_\e$. 
\par
Note that given two pairs ($C_1^+,C_1^-)\in \F^+$, ($C_2^+,C_2^-)\in \F^-$, we always have
$C_1^+\subseteq C_2^+$. Thus the corresponding one-dimensional solutions $U_\e^1$ and $U_\e^2$ satisfy $-1< U_\e^1\le U_\e^2<1$, with strict inequality unless  (see Lemma \ref{sior}) ($C_1^+,C_1^-)= (C_2^+,C_2^-)$. Taking the supremum and the infimum respectively on $\F^+$ and $\F^-$ we obtain
$-1< \underline \psi_\e(x) \le \overline \psi_\e(x) <1$.
\par
Since the families  of spherical caps in $\F^{\pm}$ give  coverings of $\Om^\pm$, then  the limits $\lim_{x\to p^+} \underline\psi_\e(x) = 1$ and 
$\lim_{x\to p^-} \overline\psi_\e(x) = -1$ follow easily by construction of $\underline \psi_\e, \,\overline \psi_\e$, and in particular by the asymptotic behavior of one-dimensional solutions, stated in Theorem \ref{odsthm}. 
\par
Finally, assume that equality holds at some point $x\in \Hyp^n$. Then there exists sequences 
$\{(C^+_{1,m}, C^-_{1,m})\} \subset \F^+$, $\{(C^+_{2,m}, C^-_{2,m})\} \subset \F^-$, $C^+_{1,m}\subseteq C^+_{1,m}$, such that the corresponding solutions $U^1_{\e,m}$, $U^2_{\e,m}$ satisfy 
$U^1_{\e,m} \le U^2_{\e,m}$ and $\lim_{m\to\infty} U^1_{\e,m}(x) = \lim_{m\to\infty} U^2_{\e,m} =l\in (-1,1)$.
Since $l\in (-1,1)$, by Lemma \ref{odr} we deduce that (up to a subsequence) $U^i_{\e,m} \to U^i_\e$ locally uniformly, for some one dimensional solutions $U^i_\e$, corresponding to some spherical caps $C^+_i$, for $i=1,\,2$. Clearly $U^1_\e \le U^2_\e$ in $\Hyp^n$, hence $C^+_1\subseteq C^+_2$ by Lemma \ref{sior}. On the other hand,  $U^1_\e(x) = U^2_\e(x) =l $, so that, in view of  Lemma \ref{sior}, $U^1_\e \equiv \underline\psi_\e \equiv\overline\psi_\e \equiv U^2_\e$ and  the proof is complete.  
\end{proof}

\subsection{Construction of solutions of the PDE}
Here  we will prove Theorem \ref{mainthm}. In particular, we will construct  global solutions for the equation \eqref{equa} that are local minimizer of the  energy functional \eqref{enfun}, and satisfying the prescribed boundary conditions $u=\pm 1$ on given open subsets $\Om^+$, $\Om^-\subset \sinf$.
\par
First we consider an increasing sequence $r_k\to 1$, and construct   energy minimizers $u_{\e,k}$ defined on $B_{r_k}$,  with  $\underline \psi_\e \le u_{\e,k} \le \overline \psi_\e$. Then, letting  $k \to \infty$, we obtain by compactness   a limit  solution $u_\e:= \lim_k u_{\e,k} $ such that $\underline \psi_\e \le u_\e \le \overline \psi_\e$, in the whole hyperbolic space $\Hyp^n$.
\par
Let $r_k\in (0,1)$ be fixed. The solution $u_{\e,k} $ in $B_{r_k}$ is defined as a minimum point of the following minimization problem,
\begin{equation}\label{minprork}
\min \{  \E_\e(u,B_{r_k}), u\in H_{\Psi_\e}^1(B_{r_k}) \}
\end{equation}
where $\E_\e$ is defined in \eqref{enfun3} and $H_{\Psi_\e}^1(B_{r_k})$ denotes the set of $H^1$ functions with traces on $\partial B_{r_k}$ between $\underline\psi_\e$ and $\overline\psi_\e$. 
\begin{proposition}\label{proprk}
The minimum problem \eqref{minprork} admits a minimizer $u_{\e,k}\in H_{\Psi_\e}^1(B_{r_k})$, satisfying $\underline \psi_\e \le u_{\e,k} \le \overline \psi_\e$ in $B_{r_k}$. 
Moreover $u_{\e,k} \in C^{2}(B_{r_k})$ is a classical solution of \eqref{equa}.
\end{proposition}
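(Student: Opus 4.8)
\emph{Approach.} The statement breaks into three parts: (i) existence of a minimizer by the direct method; (ii) interior regularity, giving the classical solution; (iii) the two-sided bound $\underline\psi_\e\le u_{\e,k}\le\overline\psi_\e$, which is the delicate point because $f_\e=-W'_\e$ is not monotone.

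\emph{Part (i): existence.} Since $\overline B_{r_k}\subset\subset B_1$, the conformal factor $\tfrac18(1-|x|^2)^2$ in \eqref{enfun3} is bounded above and below away from $0$ on $B_{r_k}$, so a finite-energy bound forces a bound on the Euclidean Dirichlet integral. The admissible class $H^1_{\Psi_\e}(B_{r_k})$ is convex and (by compactness of the trace operator) weakly closed in $H^1(B_{r_k})$; it is nonempty since it contains $\underline\psi_\e$, which is Lipschitz hence $H^1$ on $B_{r_k}$ with finite energy by Proposition~\ref{psitocca}, so the infimum is finite. Given a minimizing sequence, we may replace each term by its truncation at the levels $\pm1$: by Proposition~\ref{psitocca} the trace already lies in $[-1,1]$, so the constraint is preserved, while the energy does not increase, the gradient term because truncation is $1$-Lipschitz and the potential term because $W\ge 0$ and $W$ is nondecreasing on $[1,\infty)$ and even (assumptions \eqref{ass}). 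We thus obtain a minimizing sequence bounded in $H^1(B_{r_k})$ with values in $[-1,1]$; a weakly convergent subsequence has a limit $u_{\e,k}\in H^1_{\Psi_\e}(B_{r_k})$ with $|u_{\e,k}|\le1$, and by weak lower semicontinuity of the (convex) gradient term together with dominated convergence for the bounded potential term, $u_{\e,k}$ is a minimizer.

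\emph{Part (ii): regularity.} Perturbations supported inside $B_{r_k}$ do not affect the boundary constraint, so $u_{\e,k}$ is a local minimizer and hence a bounded weak solution of the Euler--Lagrange equation \eqref{equa3} in $B_{r_k}$, which has a smooth, uniformly elliptic principal part and bounded right--hand side $f_\e(u_{\e,k})$. By De Giorgi--Nash--Moser, $u_{\e,k}\in C^{0,\alpha}_{\mathrm{loc}}(B_{r_k})$; since $f_\e\in C^1$ the right--hand side is then $C^{0,\alpha}_{\mathrm{loc}}$, and Schauder estimates bootstrap to $u_{\e,k}\in C^{2,\alpha}_{\mathrm{loc}}(B_{r_k})$, a classical solution of \eqref{equa}. (This conclusion applies to \emph{any} minimizer of \eqref{minprork} with values in $[-1,1]$, a fact used below.)

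\emph{Part (iii): the barrier bounds, and the main obstacle.} We prove $u_{\e,k}\le\overline\psi_\e$; the bound $\underline\psi_\e\le u_{\e,k}$ is symmetric. By \eqref{barriers} it suffices to show $u_{\e,k}\le U$ in $B_{r_k}$ for each one-dimensional solution $U:=U_\e^{C^+,C^-}$ with $(C^+,C^-)\in\F^-$; note that $u_{\e,k}\le\overline\psi_\e\le U$ on $\partial B_{r_k}$, and $|U|\le1$. Here a plain maximum-principle comparison is unavailable (non-monotone $f_\e$), so the argument must use that $U$ is itself a \emph{global local minimizer} (Theorem~\ref{odsthm}). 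Set $v:=\min(u_{\e,k},U)$ and $\hat v:=\max(u_{\e,k},U)$ on $B_{r_k}$. Pointwise a.e.\ $\{v,\hat v\}=\{u_{\e,k},U\}$ and $|\nabla_g v|^2+|\nabla_g\hat v|^2=|\nabla_g u_{\e,k}|^2+|\nabla_g U|^2$, whence
\[
\E_\e(v,B_{r_k})+\E_\e(\hat v,B_{r_k})=\E_\e(u_{\e,k},B_{r_k})+\E_\e(U,B_{r_k}).
\]
Since $v$ has the same trace as $u_{\e,k}$ on $\partial B_{r_k}$ it is admissible in \eqref{minprork}, so $\E_\e(v,B_{r_k})\ge\E_\e(u_{\e,k},B_{r_k})$; since $\hat v$ has the same trace as $U$ and $U$ minimizes $\E_\e(\cdot,B_{r_k})$ among competitors with its own trace, $\E_\e(\hat v,B_{r_k})\ge\E_\e(U,B_{r_k})$. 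Hence both are equalities, so $v$ is again a minimizer of \eqref{minprork} and, by Part (ii), a classical solution of \eqref{equa}. Then $w:=u_{\e,k}-v=(u_{\e,k}-U)^+\ge0$ is of class $C^2(B_{r_k})$ and, as $u_{\e,k}$ and $v$ both solve \eqref{equa}, satisfies $\Delta_g w+\gamma\,w=0$ in $B_{r_k}$ with $\gamma:=(f_\e(u_{\e,k})-f_\e(v))/(u_{\e,k}-v)\in L^\infty(B_{r_k})$. Writing $\Delta_g w-|\gamma|\,w\le0$ (using $w\ge0$), the strong maximum principle for operators with nonpositive zero-order term gives either $w\equiv0$, i.e.\ $u_{\e,k}\le U$, or $w>0$ in $B_{r_k}$. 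In the latter case $v\equiv U$ a.e.\ in $B_{r_k}$, so the traces of $v$ and $U$ coincide on $\partial B_{r_k}$; combined with $\mathrm{tr}\,v=\mathrm{tr}\,u_{\e,k}$ and the sandwich $u_{\e,k}\le\overline\psi_\e\le U$ there, we get $u_{\e,k}=\overline\psi_\e=U$ on $\partial B_{r_k}$. Then $u_{\e,k}$ is a solution of \eqref{equa} in $B_{r_k}$ with $|u_{\e,k}|\le1$ coinciding on $\partial B_{r_k}$ with the one-dimensional solution $U$; the sliding argument proving the minimality/uniqueness part of Theorem~\ref{odsthm} in Section~\ref{odsthmss} (applied after moving the zero set of $U$ to a standard one by an isometry) forces $u_{\e,k}\equiv U$, contradicting $w>0$. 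Hence $w\equiv0$ and $u_{\e,k}\le U$; taking the infimum over $(C^+,C^-)\in\F^-$ yields $u_{\e,k}\le\overline\psi_\e$. The heart of the proof is this Part (iii): non-monotonicity of $f_\e$ rules out a direct comparison with the sub/supersolutions $\underline\psi_\e,\overline\psi_\e$, and one is forced to pass through the minimality of the one-dimensional building blocks, via the lattice identity for $\E_\e$ and, in the degenerate case, the sliding/uniqueness result of Theorem~\ref{odsthm}.
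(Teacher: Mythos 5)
Your Parts~(i) and (ii) match the paper in content, but Part~(iii) takes a genuinely different route. The paper does not prove the two-sided bound directly on the limit minimizer $u_{\e,k}$: it first arranges a minimizing sequence $u^m_{\e,k}$ which is smooth \emph{up to} $\partial B_{r_k}$ (by approximating and then re-minimizing with respect to the sequence's own smooth traces), applies the sliding argument of Theorem~\ref{odsthm} to each $u^m_{\e,k}$ to get $\underline\psi_\e\le u^m_{\e,k}\le\overline\psi_\e$ on $\overline{B_{r_k}}$, and only then passes to the weak limit. You instead prove the bound on $u_{\e,k}$ itself via a lattice (cut--and--paste) identity: $v=\min(u_{\e,k},U)$ and $\hat v=\max(u_{\e,k},U)$ satisfy $\E_\e(v)+\E_\e(\hat v)=\E_\e(u_{\e,k})+\E_\e(U)$, and minimality of $u_{\e,k}$ in \eqref{minprork} together with the local minimality of $U$ from Theorem~\ref{odsthm} forces both to be equalities, so $v$ is again a minimizer of \eqref{minprork}; then the strong maximum principle applied to $w=(u_{\e,k}-U)^+$ gives the dichotomy $w\equiv0$ or $w>0$. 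This route has the merit of working at the level of the limit object and of using only interior regularity in the generic branch of the dichotomy; the paper's route avoids the delicate degenerate branch altogether by ensuring boundary smoothness of the competitors before sliding.

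There is, however, a real gap in the degenerate branch. When $w>0$ throughout $B_{r_k}$ you conclude $\operatorname{tr}u_{\e,k}=\operatorname{tr}U$ on $\partial B_{r_k}$ and then invoke the sliding/uniqueness argument from Section~\ref{odsthmss} to force $u_{\e,k}\equiv U$. But that argument is stated (and proved) for solutions $\tilde U_\e\in C^2(A)\cap C^0(\overline A)$: it uses that $U_{\e,\tau}\to\pm1$ uniformly on the compact set $\overline A$ to start the sliding, and that the first touching point is interior. Your Part~(ii) gives only $u_{\e,k}\in C^2(B_{r_k})$, i.e.\ interior regularity; you have not established $u_{\e,k}\in C^0(\overline{B_{r_k}})$, so the sliding argument cannot be applied as stated. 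The gap is fixable --- precisely in the degenerate branch the trace of $u_{\e,k}$ equals the restriction to $\partial B_{r_k}$ of the smooth function $U$, so $u_{\e,k}-U\in H^1_0(B_{r_k})$ is a bounded weak solution of a uniformly elliptic equation with $L^\infty$ coefficients and zero boundary data, and global H\"older regularity (De~Giorgi--Nash up to the boundary) gives $u_{\e,k}\in C^{0,\alpha}(\overline{B_{r_k}})$ --- but this step must be made explicit, since it is exactly the point that the paper's construction of smooth approximants is designed to sidestep.
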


\begin{proof}
Assume first $\underline \psi_\e(x) = \overline \psi_\e(x)$ for some $x\in \Hyp^n$.
In view of Proposition \ref{psitocca} we  have $\underline \psi_\e = \overline \psi_\e = U_{\e}$ for   some one dimensional solution $U_{\e}$. Since $U_{\e}$ is a minimizer of \eqref{minprork}  with respect to his own boundary conditions (indeed, following the proof of Theorem \ref{odsthm}, the unique minimizer), we conclude that  $u_{\e,k}^m \equiv U_{\e}$ is a solution of \eqref{minprork}. 
\par
Now assume $\underline \psi_\e < \overline \psi_\e$ in $\overline B_1$. 
Let $u_{\e,k}^m$ be a minimizing sequence for problem \eqref{minprork}. Because of the strict inequality,  by standard truncation and approximation arguments we may assume that  $u_{\e,k}^m$ are smooth up to the boundary and less than or equal to one in modulus. Notice that the energy functional $\E_\e$ in  \eqref{minprork} is sequentially weakly lower semi-continuous on $H^1$, so that, following the direct method of calculus of variations,  the energy $\E_\e$ admits a minimizer among all  $u\in H^1(B_{r_k})$ with $u=g$ on $\partial B_{r_k}$, where $g\in H^{1/2}(\partial B_{r_k})$ is a given boundary condition.   
Therefore, by further minimization we may assume that $u_{\e,k}^m$ minimize \eqref{minprork} with respect to their own boundary conditions $g_{\e,k}^m:=Tr(u_{\e,k}^m)$, where $Tr$ denotes the trace operator. In particular $u_{\e,k}^m$  solve equation \eqref{equa3} with smooth boundary conditions $g_{\e,k}^m$, so that they are smooth (say $C^2$) up to the boundary by standard regularity theory for elliptic equations. 
\par
Now we aim to prove the inequality $\underline \psi_\e \le u^m_{\e,k} \le \overline \psi_\e$ in $\overline B_{r_k}$. We will prove just the inequality $\underline \psi_\e \le u^m_{\e,k}$, the proof of the other one being entirely similar. By definition of $\underline \psi_\e$, it is enough to prove the inequality
$U_\e \le u^m_{\e,k}$ in $\overline  B_{r_k}$ for every one-dimensional solution $U_\e$ corresponding to some  spherical caps  $(C^+,C^-)\in \F^+$, according with \eqref{fpfm}. Since $U_\e \le u^m_{\e,k}$ holds on $\partial B_{r_k}$ and since the solutions $u^m_{\e,k}$ belong to
$C^2(B_{r_k}) \cap C^0(\overline B_{r_k})$, we can repeat the sliding argument used in the proof of Theorem \ref{odsthm} to obtain the inequality $U_\e\le u^m_{\e,k}$ in $\overline{B_{r_k}}$.  This concludes the proof of 
\begin{equation}\label{aimata}
\underline \psi_\e \le u^m_{\e,k} \le \overline \psi_\e  \qquad \text{ in } \overline B_{r_k}. 
\end{equation}
Now, letting $m\to \infty$, up to a subsequence we have $u^m_{\e,k}\weak  u_{\e,k}$ for some $u_{\e,k}\in H^1_{\Psi}(B_{r_k})$. Since  $u^m_{\e,k}$ is a minimizing sequence, by lower semi-continuity we conclude that $u_{\e,k}$ is a minimum for the variational problem \eqref{minprork}. 
Clearly, $u_{\e,k}$ is a solution of the corresponding Euler-Lagrange equation \eqref{equa3}, and hence it is $C^2$ in $B_{r_k}$ by standard regularity theory. 
Finally, as $m\to \infty$, by \eqref{aimata} we deduce  $\underline \psi_\e \le u_{\e,k} \le  \overline \psi_\e$ in $B_{r_k}$ as desired.
\end{proof}
We are in a position to complete the proof of Theorem \ref{mainthm}. As already explained, the last step consists in taking the limit of the solutions $u_{\e,k}$ given by Proposition \ref{proprk}, as $r_k \to 1$. 

\begin{proof}[Proof of Theorem \ref{mainthm}]
Let $u_{\e,k}$ be the solutions in $B_{r_k}$ given by Proposition \ref{proprk}. Since they are  equi-bounded and smooth, by standard elliptic regularity theory they are  bounded in $C^{2,\alpha}_{loc}$, and hence they are precompact in $C^2_{loc}(B_1)$. Hence, up to a subsequence we may assume that $u_{\e,k}$ converge to some $u_\e$  in $C^2_{loc}(B_1)$. Clearly $u_\e$, being limit of locally minimizing solutions, is itself a local minimizer of the energy $\E_\e$ in $\eqref{enfun}$, and $u_\e\in C^2(\Hyp^n)$ is a classical solution of \eqref{equa}. Finally, since $\underline \psi_\e \le u_{\e,k} \le  \overline \psi_\e$ in $B_{r_k}$, letting $r_k\to 1$
we get $\underline \psi_\e \le u_{\e} \le  \overline \psi_\e$ in $\Hyp^n$.  In view of Proposition \ref{psitocca} we conclude that $u_\e$ satisfies the desired boundary conditions, i.e. $u_\e\in C^0(\Hyp\cup \Om^+ \cup \Om^-)$, 
and $u_\e(x) = \pm 1$ on $\Om^{\pm}$.
\par
Next, we prove the inclusion $\Sigma_\e\subset 
\overline{conv(F)}$.  
First we recall that by closed half-spaces we mean the closure in $\Hyp^n$ of any connected component of  $\Hyp^n\setminus \Sigma$, where  $\Sigma$ is  (in the ball model of $\Hyp^n$) a spherical cap  touching $\sinf$ orthogonally.  Notice that by Theorem \ref{odsthm}, we can identify the half spaces with the sets of positivity of elementary solutions, and such correspondence is bijective.  Now we  consider the family of all closed  half spaces such that their Euclidean closure in $\overline B_1$ contains $F$. Then it is a standard fact that  $\overline{conv(F)}\cap B_1$ coincides with the intersection of  all such closed  half spaces.
\par 
 By \eqref{barriers} it follows that if $U_\e=U_\e^{C^+,C^-}$ with  
$(C^+,C^-) \in \F^+$, then the set of negativity of $u_\e$ is contained in the set of negativity of 
$U_\e$, and an analogous inclusion relation holds for the set of positivity of $u_\e$ and for any 
elementary solution $U_\e=U_\e^{C^+,C^-}$, with $(C^+,C^-) \in \F^-$. Thus, varying $(C^+,C^-) $ in $\F^+$ and $\F^-$ respectively, we deduce 
$$
\{u_\e \le 0 \} \subset \bigcap_{(C^+,C^-)\in \F^+} \{U_\e^{C^+,C^-} \le 0\},\qquad
\{u_\e \ge 0 \} \subset \bigcap_{(C^+,C^-)\in \F^-} \{U_\e^{C^+,C^-} \ge 0\},
$$ 
and therefore
\begin{equation}\label{nondalabel}
\{u_\e = 0 \} \subset \bigcap_{(C^+,C^-)\in \F^+} \{U_\e^{C^+,C^-} \le 0\} 
\bigcap
\bigcap_{(C^+,C^-)\in \F^-} \{U_\e^{C^+,C^-} \ge 0\}.
\end{equation}
Since to each closed  half space with Euclidean closure in $\overline B_1$ containing $F$
corresponds  (either the positivity or the negativity set of) an elementary solutions  $U_\e=U_\e^{C^+,C^-}$ (with  $(C^+,C^-)$ either in  $\F^-$ or in $\F^+$, respectively)  the inclusion \eqref{nondalabel} is equivalent to  $\Sigma_\e\subset \overline{conv(F)}$ .
\par
The inclusion $\overline{\Sigma_\e} \cap \sinf \subset F$ is now a direct consequence of $\Sigma_\e\subset \overline{conv(F)}$ and $\overline{conv(F)} \cap \sinf = F$, so we pass 
to prove   the inclusion $\partial \Om^+ \cap \partial \Om^- \subseteq \overline{\Sigma_\e} \cap \sinf$. To this purpose, it is enough to notice that if $x\in \partial \Om^+ \cap \partial \Om^-$, 
then for every positive $\rho$ there exists a continuous path $\gamma\subset B_\rho(x)$ joining a point of $\Om^+$ with a point of $\Om^-$. By continuity of $u_\e$, we deduce that there exists $\overline x\in  B_\rho(x)$ with $u_\e(\bar x) = 0$, i.e., $\bar x\in \Sigma_\e$. By the arbitrariness of $\rho$, we deduce  $x\in\overline{\Sigma_\e} \cap \sinf$ for all $x\in \partial \Om^+ \cap \partial \Om^-$, that is   $\partial \Om^+ \cap \partial \Om^- \subseteq \overline{\Sigma_\e} \cap \sinf$. 
\par 
Finally we prove that $\Sigma_\e$ is always a $C^2$ hypersurface for $n\leq 7$ and $\e\leq \e_0$ sufficiently small. We work in the Poincar\'e ball model and we prove the claim arguing by contradiction. Indeed, if by contradiction $\Sigma_\e$ are not smooth, then by the implicit function theorem we  have that, for some sequence $\epsm \to 0$, there are locally energy minimizing solutions $\{ u_\epsm \}$ of \eqref{equa3} such that (up to hyperbolic isometries) $u_\epsm(0)=0$ and $\nabla u_\epsm(0)=0$.
We introduce the scaled functions $\tilde u_m \in C^2(2{\epsm}^{-1}B_1)$ as $\tilde u_m(x)=u_\epsm(\frac{\epsm}{2} x)$, so that for each $m \geq1$ each $\tilde u_m$ solves
\begin{equation}
\label{scaledeq}
(1-|\epsm x|^2)^2 \Delta  \tilde u_m + (n-2) ( 1-|\epsm x|^2 ) \epsm x \cdot \nabla \tilde u_m  +f(u_\epsm)=0.
\end{equation}
According to the standard elliptic regularity theory for \eqref{scaledeq} the sequence $\{ \tilde u_m\}$ is compact in $C^2_{\rm{loc}}$, so, up to subsequences, there exists $\tilde u \in C^2 (\mathbb{R}^n)$ such that as $m \to \infty$ we have  $\tilde u_m \to \tilde u$ in $C^2_{\rm{loc}}$, $\tilde u$ is  an entire solution 
of $\Delta \tilde u + f(\tilde u)=0$ , $\tilde u(0)=0$ and $\nabla \tilde u (0)=0$. Since local energy minimality  passes to the limit under smooth convergence,  it's easy to check that the limiting function $\tilde u$ is also a local energy minimizer of the energy functional \eqref{enfun} on $\mathbb{R}^n$ with the standard metric. Since $\tilde u \not \equiv \pm 1$, according to \cite{SO}, Theorem 2.3,  for $n\leq 7$ we have $\tilde u(x)=g(a \cdot x)$ for some unit vector $a \in \mathbb{R}^n$ and some strictly increasing function $g \in C^2 (\mathbb{R})$ vanishing at the origin which solves the ODE $g^{\prime \prime}+f(g)=0$ on the real line. On the other hand, since $g^\prime (0)=\nabla \tilde u (0) \cdot a =0$ and $f$ is $C^1$ and odd we conclude $g\equiv 0$ by ODE uniqueness, which is a contradiction because $g$ is strictly increasing.
\end{proof}

\subsection{Asymptotic behavior and fine properties of solutions}
In this paragraph we study the asymptotic behaviour of the solution constructed in Theorem \ref{mainthm} under the assumption that $L:=\partial \Om^\pm $ is a $C^1$ hypersurface in the sphere at infinity $\sinf$. First, in Proposition \ref{blowup} 
we show that blowing up  the solution $u_\e$ 
around a point of $L$  the sets $\Om^\pm$ converge (under scaling) to a pair of $(n-1)$-dimensional half spaces, while 
$u_\e$ converges to the corresponding one dimensional solution given by Theorem \ref{odsthm}.
As a consequence, we will be in a position to prove Theorem \ref{ab}, showing that the zero level set of $\Sigma$ touches orthogonally   the sphere at infinity along $L$, and proving the asymptotic expansion \eqref{abe} for $u_\e$ near $L$. In the following we set $\nu_L(p)$ the  inner unit normal to $\partial \Om^+$ at $p$.  Finally,   we define $g_\e$ as  the solution to problem \eqref{eq0dim} vanishing at zero, corresponding to the unique solution $h_\e$ to \eqref{eq1dim} vanishing at zero (see Proposition \ref{odsprop} and Proposition \ref{nac}). 

\begin{proposition}\label{blowup}
Let $\Om^+$ and $\Om^-$ be disjoint open subsets of $\sinf$ with common boundary $L$, and assume that $L\subset \sinf$  is a smooth hypersurface of class $C^1$. Let moreover $\{p_k\}\subset L$ converging to some $p\in L$  and $\lambda_k \searrow 0$ as $k\to\infty$. 
Finally, let $u_\e$ be a local minimizer of the energy $\E_\e$ in \eqref{enfun}, such that $\underline\psi_\e \le u_\e \le \overline \psi_\e$, where $\underline\psi_\e $ and $\overline \psi_\e$ are defined in \eqref{barriers}.
\par
Then, in the half space model $\Hyp^n\simeq  \R^{n-1}\times (0,\infty)$ we have 
\begin{equation}\label{blowupconv}
u_\e(p_k + \lambda_k R_k y) \to g_\e \left(\frac{\nu_L(p) \cdot y}{y_n}\right) \qquad \text{ as } k\to\infty
\end{equation}
 in $C^2_{loc}(\R^{n-1}\times (0,\infty))$, for a suitable  sequence $\{R_k\}\subset O(\R^{n-1})$ converging to the identity,  with $R_k \nu_L(p) = \nu_L(p_k)$. 
\par
Finally, for every $\{q_k\}\subset \Hyp^n$ with $q_k \to q_\infty \in 
 \{ \nu_L(p) \cdot y \neq 0, \, y_n=0 \} \subset \R^{n-1}\times \{0\}  $ we have 
$u_\e(p_k + \lambda_k R_k q_k) \to \text{ sgn } \nu_L(p) \cdot q_\infty$ as $k\to\infty$.
\end{proposition}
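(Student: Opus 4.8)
The plan is to blow $u_\e$ up around $p_k$ at scale $\lambda_k$ and to identify the limit by squeezing it between the two barriers of \eqref{barriers}, which under rescaling degenerate to the barriers of the \emph{half-space problem} whose unique solution is exactly $g_\e(\nu_L(p)\cdot y/y_n)$. First I would work in the half-space model $\Hyp^n\simeq\R^{n-1}\times(0,\infty)$ and choose $R_k\in O(\R^{n-1})$ with $R_k\to\mathrm{Id}$ and $R_k\nu_L(p)=\nu_L(p_k)$ (possible since $L$ is $C^1$, so $\nu_L(p_k)\to\nu_L(p)$). Then $\Phi_k(y):=p_k+\lambda_k R_k y$ is a composition of a rotation about the vertical axis, a dilation and a horizontal translation, hence a hyperbolic isometry, so $v_k:=u_\e\circ\Phi_k$ again solves \eqref{equa} (equivalently \eqref{equa2}) and $|v_k|\le1$. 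Since the coefficients of \eqref{equa2} are smooth and nondegenerate on compact subsets of $\R^{n-1}\times(0,\infty)$ and $f_\e(v_k)$ is bounded, interior elliptic estimates give a uniform $C^{2,\alpha}_{loc}$ bound, hence $\{v_k\}$ is precompact in $C^2_{loc}(\R^{n-1}\times(0,\infty))$; it will then be enough to show that every subsequential $C^2_{loc}$-limit equals $U^{*}_\e(y):=g_\e(\nu_L(p)\cdot y/y_n)$.

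The core step is to prove that the rescaled barriers converge, $\underline\psi_\e\circ\Phi_k\to\underline\psi^{*}_\e$ and $\overline\psi_\e\circ\Phi_k\to\overline\psi^{*}_\e$, where $\underline\psi^{*}_\e,\overline\psi^{*}_\e$ are built as in \eqref{barriers} from the pair of (hemispherical) caps $H^{\pm}:=\{\,y:\ \pm\,\nu_L(p)\cdot y>0\,\}\subset\sinf$, whose common boundary is $\{\nu_L(p)\cdot y=0\}$. For the first convergence: given any spherical cap $C^+$ with $\overline{C^+}\subset H^+$ and $C^-:=\sinf\setminus\overline{C^+}$, the $C^1$-regularity of $L$ at $p$ --- which furnishes, uniformly for $p_k$ in a neighborhood of $p$, a first-order approximation of $\Om^+$ near $L$ by the half-space $\{q:\nu_L(p_k)\cdot(q-p_k)>0\}$ --- yields $\Phi_k(\overline{C^+})\subset\Om^+$ for all $k$ large, i.e. $(\Phi_k(C^+),\Phi_k(C^-))\in\F^+$; since one-dimensional solutions are equivariant under the isometry $\Phi_k$ (cf. the proof of Theorem \ref{odsthm}), $U_\e^{\Phi_k(C^+),\Phi_k(C^-)}\circ\Phi_k=U_\e^{C^+,C^-}$, hence $\underline\psi_\e\circ\Phi_k\ge U_\e^{C^+,C^-}$ on $\Hyp^n$ for $k$ large. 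Taking the supremum over all such caps --- which by Lemmas \ref{sior} and \ref{odr} recovers $\underline\psi^{*}_\e$ --- gives $\liminf_k\underline\psi_\e\circ\Phi_k\ge\underline\psi^{*}_\e$, and symmetrically $\limsup_k\overline\psi_\e\circ\Phi_k\le\overline\psi^{*}_\e$. Since $H^+$ and $H^-$ are disjoint open spherical caps with common boundary, the dichotomy in Proposition \ref{psitocca} forces $\underline\psi^{*}_\e\equiv\overline\psi^{*}_\e\equiv U^{*}_\e$, which by Theorem \ref{odsthm} is the one-dimensional solution vanishing on the totally geodesic hypersurface $\{\nu_L(p)\cdot y=0\}$, i.e. $U^{*}_\e(y)=h_\e(\tilde d(y,\{\nu_L(p)\cdot y=0\}))=g_\e(\nu_L(p)\cdot y/y_n)$.

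Combining this with the sandwich $\underline\psi_\e\circ\Phi_k\le v_k\le\overline\psi_\e\circ\Phi_k$ and $\underline\psi^{*}_\e\le\overline\psi^{*}_\e$ forces $v_k\to U^{*}_\e$ pointwise on $\R^{n-1}\times(0,\infty)$, and together with the $C^2_{loc}$-precompactness this upgrades to $v_k\to U^{*}_\e$ in $C^2_{loc}$, which is precisely \eqref{blowupconv}. For the last assertion, let $q_k\to q_\infty\in\R^{n-1}\times\{0\}$ with $\nu_L(p)\cdot q_\infty\neq0$, say $\nu_L(p)\cdot q_\infty>0$ (the other sign being symmetric, with $\overline\psi_\e$ in place of $\underline\psi_\e$): I would pick a spherical cap $C^+$ with $q_\infty\in C^+$ and $\overline{C^+}\subset H^+$, so that as above $v_k(q_k)\ge U_\e^{C^+,C^-}(q_k)$ for $k$ large. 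Since $U_\e^{C^+,C^-}\in C^0(\Hyp^n\cup C^+\cup C^-)$ equals $1$ on $C^+$ and $q_k\to q_\infty\in C^+$, one obtains $\liminf_k v_k(q_k)\ge 1$, and as $v_k\le1$ this gives $u_\e(p_k+\lambda_k R_k q_k)=v_k(q_k)\to 1=\mathrm{sgn}(\nu_L(p)\cdot q_\infty)$.

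The hard part is the inclusion $\Phi_k(\overline{C^+})\subset\Om^+$ used in the second step: turning the $C^1$-flatness of $L$ at $p$ into an honest statement about caps sitting inside $\Om^+$ after the dilating isometry $\Phi_k$. The delicacy is that a generic $C^1$ domain obeys no interior ball condition, so one cannot fit caps through boundary points of $\Om^+$; what makes the argument work is that any cap $C^+$ compactly contained in the limiting half-space $H^+$ stays at a fixed positive spherical distance from $\partial H^+$, and this gap dominates the $o(\lambda_k)$ deviation of $L$ from its tangent plane at $p_k$ as soon as $\lambda_k$ is small, uniformly near $p$. The remaining ingredients are routine: interior elliptic compactness for \eqref{equa2}, the monotonicity/compactness of one-dimensional solutions from Lemmas \ref{sior}--\ref{odr}, and the dichotomy of Proposition \ref{psitocca}.
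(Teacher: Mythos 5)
Your proposal is correct and follows essentially the same strategy as the paper's proof: rescale by the hyperbolic isometry $\Phi_k(y)=p_k+\lambda_k R_k y$, use the $C^1$-regularity of $L$ to include rescaled caps inside $\Om^\pm$, squeeze $v_k:=u_\e\circ\Phi_k$ between the corresponding one-dimensional solutions, extract a $C^2_{loc}$ subsequential limit by elliptic compactness, and identify it via a sup/inf over caps and the dichotomy of Proposition \ref{psitocca}. The only presentational difference is that you phrase the cap inclusion as convergence of the rescaled barriers to the half-space barriers, whereas the paper works directly with the pointwise sandwich by fixed elementary solutions; the two are equivalent.
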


\begin{proof}
Up to a translation we can always assume $p=0\in \mathbb{R}^{n-1}\times \{ 0\}$. Moreover,   up to a rotation $R\in O(\R^{n-1})$ we can  assume that $\nu_L(p)=(1,0,\ldots , 0)$, so that locally around $p$  we have $L = \{(f(x_2, \ldots ,x_{n-1}), x_2, \ldots ,x_{n-1})\}$  for some $ C^1 $ function $f$ such that $f(0,\ldots ,0) = 0$, $\nabla f(0,\ldots 0) = 0$, and
 $$
 \Om^+ = \{(x_1, \ldots, x_{n-1}) : x_1 > f (x_2, \ldots ,x_{n-1})\},
\quad
\Om^- = \{(x_1, \ldots, x_{n-1}) : x_1 < f (x_2, \ldots ,x_{n-1})\}.
$$
Since $p_k \to p$ as $k\to \infty$ and $f$ is $C^1$ we can choose rotations $R_k\in  O(\R^{n-1})\subset O(\R^{n})$  with $R_k\to Id$ such that  $R_k (1,0,\ldots,0) $ is  the inner unit normal to $\partial \Om^+$ at $p_k$. 
Let us set
$$
v_{\e,k}(y):= u_\e(p_k + \lambda_k (R_k y) ),
$$
so that $v_{\e,k}$ are smooth solutions to equation \eqref{equa2} in $\mathbb{R}^{n-1}\times (0,\infty)$, 
and let us prove that
\begin{equation}\label{limkblow}
v_{\e,k}(y) \to g_\e \left(\frac{y_1}{y_n}\right) \qquad    \text{ locally uniformly as } k\to\infty.
\end{equation}
To this purpose, let $B_1,\, B_2$ two given balls in $\R^{n-1}$ with $B_1\subset \{y_1 >0\}$ and
$B_2\subset \{y_1<0\}$. Since $L$ is $C^1$ we clearly have that, for $k$ large enough, 
$$
B_1\subset \lambda_k^{-1} R_k^{-1}(\Om^+ - p_k),
\qquad
B_2\subset \lambda_k^{-1} R_k^{-1}(\Om^- - p_k),
$$
or, equivalently,
\begin{equation}\label{palladentro}
C_{1,k}^+:= p_k + \lambda_k R_k B_1 \subset \Om^+,
\qquad
C_{2,k}^-:=  p_k + \lambda_k R_k B_2 \subset \Om^-.
\end{equation}
Let us consider the elementary solution $U^{1,k}_{\e}$ corresponding to the spherical cap 
$C_{1,k}^+$ (and to its complementary  $C_{1,k}^-$ in the sphere at infinity), and analogously let  $U^{2,k}_{\e}$ be the elementary solution corresponding to
$C_{2,k}^-$ (and $C_{2,k}^+$). 
By \eqref{barriers} and the assumption on $u_\e$ we have
$$
U^{1,k}_{\e}(x) \le \underline \psi_\e(x) < u_\e(x) <  \overline \psi_\e(x) \le U^{2,k}_{\e}(x)
\qquad \text{for all }x  \text{ in } \Hyp^n. 
$$ 
Changing variables in the previous inequality, we get
\begin{equation}\label{clearly}
U^{1}_{\e}(y)   < v_{\e,k}(y) <   U^{2}_{\e}(y), \qquad \text{for all }y  \text{ in } \mathbb{R}^{n-1}\times (0,\infty) \, , 
\end{equation}
where $U^{1}_{\e}$ and $U^{2}_{\e}$ are the elementary solutions corresponding to $B_1$, $ B_2$ and their complements.
Since $v_{\e,k}(y)$ are uniformly bounded solutions of  \eqref{equa2}, by standard a priori estimates we have that, up to subsequences, $v_{\e,k}(y)$ converges  in $C^2_{loc}(\mathbb{R}^{n-1}\times (0,\infty))$  to some function  $v_{\e,\infty}\in C^2 (\mathbb{R}^{n-1}\times (0,\infty))$ which solves \eqref{equa2}. Clearly, inequality \eqref{clearly} yields
\begin{equation*}
U^{1}_{\e}(y)   < v_{\e,\infty}(y) <   U^{2}_{\e}(y) \, , \qquad \text{for all }y  \text{ in } \mathbb{R}^{n-1}\times (0,\infty). 
\end{equation*}
Since $B_1\subset \{y_1>0\}$ and $B_2\subset \{y_1<0\}$ can be chosen arbitrarily, taking the supremum and the infimum respectively in the previous inequality, in view also of Proposition \ref{psitocca} we deduce  that
$
v_{\e,\infty}(y) = g_\e \left(\frac{y_1}{y_n}\right),
$
i.e., $v_{\e,\infty}$ is the elementary solution corresponding to the half spaces $C^+ = \{y_1>0\}$ and  $C^- = \{y_1<0\}$.   
By the uniqueness of the limit we conclude that the whole sequence $v_{\e,k}(y)$ converges to  $g_\e(y_1/y_n)$ in $C^2_{loc}(\mathbb{R}^{n-1}\times (0,\infty))$, i.e. 
 \eqref{blowupconv} holds.
 \par
Finally, we can always assume that $q_\infty \in B_1\cup B_2$, so that the  last statement
of the proposition easily follows from \eqref{clearly}, choosing $y=q_k$ and letting $k\to\infty$.
 \end{proof}
We are in a position to prove Theorem \ref{ab}. 

\begin{proof}[Proof of Theorem \ref{ab}]
The existence 
of an entire solution $u_\e\in C^2(\Hyp^n)\cap C^0(\Hyp^n\cup\sinf\setminus L)$ to equation \eqref{equa} satisfying the prescribed boundary conditions, that is a local minimizer of the energy $\E_\e$ in \eqref{enfun}, and with $\Sigma_\e\subset 
\overline{conv(L)}$ is provided by Theorem \ref{mainthm}.
\par
Now we pass to the proof of the regularity property of $\Sigma_\e:= u^{-1}_\e(0)$ and its orthogonality to $\sinf$,  using a  blow-up argument based on Proposition \ref{blowup}. Let $\{P_k\}\subset \Hyp^n$ be a sequence of points converging to some limit $P_\infty \in L$, and denote by $p_k$ a projection of $P_k$ on $L$, i.e., a point in $L$ of minimal Euclidean distance from $P_k$  in the half space model (with origin in $P_\infty$), so that  $\lambda_k:=|p_k - P_k| = \text{dist}_E(P_k, L)$ is the Euclidean distance between $P_k$ and $L$ and $p_k \to P_\infty = 0$ as $k \to \infty$.   
By Proposition \ref{blowup} we have that, for suitable rotations $R_k\in O(\R^{n-1})$ converging to the identity 
\begin{equation}\label{blowupconv2}
u_\e(p_k + \lambda_k R_k y) \to g_\e \left(\frac{\nu_L(P_\infty) \cdot y}{y_n}\right) \qquad
\text{ in } C^2_{loc}\big(\R^{n-1} \times (0,+\infty)\big),
\end{equation}
 as  $k\to\infty$. 
  By construction of $p_k$, we have that  
$P_k$ belongs to the plane generated by $e_n:=(0, \ldots, 1)$ and $\nu_L(p_k)$ and passing through $p_k$.
 Clearly $P_k = p_k + \lambda_k R_k y_k$ for some  $y_k\in \R^{n-1}\times (0,\infty)$ with $|y_k|=1$. Up to subsequence we have $y_k\to y_\infty$ for some $y_\infty\in \R^{n-1}\times [0,\infty)$ with $|y_\infty|=1$.
Now we assume that $u_\e(P_k) = 0$, i.e., $P_k\in\Sigma_\e$ for all $k$, and we wish to show that $y_\infty = e_n$. First, we claim that   $y_\infty \cdot e_n \neq 0$.  Indeed, if by contradiction,  $y_\infty \cdot e_n = 0$,
then we would have $y_k \cdot e_n  \to 0$, so that
$$
1 = \lim_k |\nu_L(p_k) \cdot R_k y_k| = \lim_k |\nu_L(P_\infty) \cdot y_k| = |\nu_L(P_\infty) \cdot y_\infty|.
$$
In particular, we would have $ \nu_L(P_\infty) \cdot y_\infty\neq 0$, \, $y_\infty \in \mathbb{R}^{n-1}\times \{ 0\}$ and $y_\infty=\pm \nu_L(P_\infty)$. 
Since  $P_k\in \Sigma_\e$, the last statement in Proposition \ref{blowup} would give a contradiction, and this proves the claim. Now  $y_\infty \cdot e_n \neq 0$, hence \eqref{blowupconv2}
yields
$$
g_\e\left(\frac{y_\infty \cdot \nu_L(P_\infty)}{y_\infty \cdot e_n}\right) = \lim_k u_\e (p_k + \lambda_k R_k y_k) = 0, 
$$
which gives $y_\infty \cdot \nu_L(P_\infty) = 0$. Since $y_\infty$ belongs to the vector space generated by $e_n$ and  $\nu_L(P_\infty)$ and it has unit length, we conclude that $y_\infty = e_n$, and the whole sequence $y_k$ converges to $e_n$ as $k\to\infty$.
\par
We are in the position to conclude the proof of the regularity of $\Sigma_\e$ near $\sinf$, and its orthogonality property. Indeed, since 
$$
\nabla g_\e\left(\frac{y \cdot \nu_L(P_\infty)}{y \cdot e_n}\right)  \neq 0 \qquad \text{ for } y= y_\infty = e_n,
$$
by \eqref{blowupconv2}, we deduce  that  also $\nabla u_\e(P_k)\neq 0$ for $k$ large enough. Thus, as the sequence $\{ P_k\}$ can be chosen arbitrarily, we conclude that $\Sigma_\e$ is smooth near the sphere at infinity by the implicit function theorem, with a well defined normal vector field $\nu_{\Sigma_\e}(P) = \frac{\nabla u_\e(P)}{|\nabla u_\e(P)|}.$ 
\par
Now, since $y_k \to e_n$, by \eqref{blowupconv2} we deduce,
\begin{equation}\label{orto}
\nu_{\Sigma_\e}(P_k) = \frac{\nabla u_\e(P_k)}{|\nabla u_\e(P_k)|} \to 
 \nu_L(P_\infty), 
\end{equation}
i.e., the normal vector field extends continuously up to the boundary, and this is enough to conclude that $\Sigma_\e \cup L$ is a $C^1$ hypersurface with boundary. 
\par
Finally, we prove the asymptotic expansion \eqref{abe}, using a blow-up argument analogous to that used to prove \eqref{blowupconv2}.   Let $\{P_k\}\in\Hyp^n$ converging  to some $P_\infty \in \sinf$.  
If $P_\infty \not\in L$ the proof is straightforward, since  for $P_\infty\in\Om^\pm$ we have that $u_\e(P_k) \to \pm 1$ and $\tilde{d}(P_k,K(L))\to \pm\infty$. 
\par
Now, we consider the case $P_\infty\in L$,
working as above in the half space model with origin in $P_\infty$, so that $K(L)$ is the cone over $L$ from the point $e_n$. Let $p_k\in L$  be points of minimal Euclidean distance from $P_k$,  let $\lambda_k=|p_k-P_k|$, and let $R_k\in O(\R^{n-1})$ such that $R_k \nu_L(P_\infty) = \nu_L (p_k)$ and $R_k \to Id$,  as in  \eqref{blowupconv2}.
 Again,  $P_k = p_k + \lambda_k R_k y_k$ for some  $y_k\in \R^{n-1}\times (0,\infty)$ with $|y_k|=1$ and  (up to a subsequence)  $y_k\to y_\infty$ for some $y_\infty\in \R^{n-1}\times [0,\infty)$ with $|y_\infty|=1$.
\par Now we distinguish two cases, corresponding to $y_\infty \cdot e_n = 0$ and $y_\infty \cdot e_n \neq 0$. 
If $y_\infty \cdot e_n = 0$, then, arguing as above, we have $y_\infty = \pm \nu_L(P_\infty)$, and hence  by the last statement in Proposition \ref{blowup} we have $u_\e(P_k) \to sgn ( \nu_L(P_\infty)\cdot y_\infty)$. Thus, we have    to prove that also $h_\e(\tilde d(P_k,K(L)))\to sgn ( \nu_L(P_\infty)\cdot y_\infty)$. To this purpose,  
it is enough to  notice  that $P_k$ lies always (for $k$ large enough) on the same side of $K(L)$ and that $d(P_k,K(L)) = d(R_k y_k, \lambda_k^{-1} (K(L) - p_k) ) \to \infty $  as $k\to \infty$, since $R_k y_k\to y_\infty$ while 
$ \lambda_k^{-1} (K(L) - p_k)$ approaches the vertical half plane passing through the origin and orthogonal to $y_\infty$.
\par
We pass to consider the case $y_\infty \cdot e_n \neq 0$. Since 
$$
g_\e \left(\frac{ \nu_L(p_k) \cdot R_k y_k}{R_k y_k\cdot e_n}\right) 
\to
g_\e \left(\frac{\nu_L(P_\infty) \cdot y_\infty}{y_\infty \cdot e_n}\right)
\qquad \text{ as } k\to\infty,
$$
thanks to the blow-up formula given by 
\eqref{blowupconv2} for $y=y_\infty$, 
it is clearly enough to prove that
\begin{equation}\label{hauconv1}
g_\e \left(\frac{ \nu_L(p_k) \cdot R_k y_k}{R_k y_k\cdot e_n}\right) 
-
h_\e\left( \tilde d\big(R_k y_k, \lambda_k^{-1} (K(L) - p_k) \big) \right) \to 0
\qquad \text{ as } k\to\infty.
\end{equation}
Let us set $K^\infty(L)$ the cone over $K$ from the point at infinity, i.e., $K^\infty(L) = L\times (0\times\infty)$. Then, it is easily seen that by construction of $p_k$, we have $\tilde d(P_k,K^\infty(L) ) = \tilde d(P_k, T_{p_k}L\times (0,\infty))$. Therefore, 
$$
g_\e \left(\frac{ \nu_L(p_k) \cdot R_k y_k}{R_k y_k\cdot e_n}\right) = 
h_\e\left(\tilde d(P_k, T_{p_k}L\times (0,\infty))\right) =
h_\e\left(\tilde d(P_k,K^\infty(L))\right).
$$
Since $\tilde d(P_k,K^\infty(L))=\tilde d\big(R_k y_k, \lambda_k^{-1} (K^\infty(L) - p_k) \big)$,  \eqref{hauconv1} is equivalent to 
\end{proof}
\begin{equation}\label{hauconv2}
h_\e\left( \tilde d\big(R_k y_k, \lambda_k^{-1} (K^\infty(L) - p_k) \big) \right) 
-
h_\e\left( \tilde d\big(R_k y_k, \lambda_k^{-1} (K(L) - p_k) \big) \right) \to 0
\end{equation}
as $k\to\infty$.  In order to prove  \eqref{hauconv2}, it is enough to check the Hausdorff convergence on compact sets  (usually referred to as Kuratowsky convergence) in $\R^{n-1}\times (0,\infty)$ of  $\lambda_k^{-1} (K^\infty(L) - p_k) $ and $\lambda_k^{-1} (K(L) - p_k)$ to $T_{P_\infty}L\times (0,\infty)$. Finally, this Hausdorff convergence  is indeed a direct consequence of the fact that  $K^\infty(L)$ and $K(L)$ are tangent along $L$, since they both touch the sphere at infinity orthogonally along the smooth hypersurface $L$; for sake of brevity  we  skip the details which are standard.

\section{Minimal hypersurfaces}
In this final section we study the limit when $\e$ tends to zero. First we investigate the behaviour of the energy functionals $\E_\e$ using $\Gamma$-convergence and we prove Theorem \ref{gcthm}. Then we apply this result to the local minimizers $u_\e$ to construct entire minimal hypersurfaces $\Sigma_\e$ with prescribed boundary at infinity and we prove Theorem \ref{micorre}.

\subsection{Proof of the $\Gamma$-convergence result}
Here we prove the $\Gamma$-convergence result given by Theorem \ref{gcthm}. The proof relies on the very well known arguments in the Euclidean setting \cite{M}, with some care in order to treat the boundary conditions $v_\e=w_\e$ on $\partial B_R$.  We divide the proof in several steps, using the same notations defined in the Introduction.
\vskip3pt
\noindent
{\it Step 1 (Compactness.)}
Since the metric on compact subsets of $\Hyp^n$ is equivalent to the Euclidean one, clearly  we may assume that 
$$
\epsm \int_{B_R} \frac12 |\nabla v_\epsm |^2+W_\epsm(v_\epsm) dx \leq C \, ,
$$
where $C>0$ depends only on $R$. Since $ |v_\epsm|\le 1$, then arguing as in \cite{M}, Proposition 3, up to subsequence we have $v_\epsm \to v^*$ in $L^1(B_R)$, where $v^* \in BV(B_R;\{-1,+1\})$. 
\vskip3pt
\noindent 
{\it Step 2 ($\Gamma$-liminf.)}
By {\it Step 1} we may  assume $v\in BV(B_R;\{-1,+1\})$, i.e.,  $\mathcal{F}(v;w^*, B_R)<\infty$. Moreover,  we may assume that each $v_\epsm$ has finite energy in $B_R$ and $|v_\epsm| \leq 1$ a.e., because energy decreases under truncation and truncation keeps the boundary conditions $v_\epsm=w_\epsm$ on $\partial B_R$. Now we essentially follows \cite{M} but with some extra care because of the possible jump between $v$ and $w^*$ along $\partial B_R$.
\par Let $A \subset \subset B_1$ be an open set with compact closure such that  $\overline{B_R} \subset A$ and let $\Psi(t) =\int_0^t \sqrt {W(s)} ds$,  so that $\Psi \in C^1 (\mathbb{R})$ and it is an odd function. We consider $\hat v_\epsm \in H^1(A)$ as $v_\epsm$ extended as $w_\epsm$ outside $B_R$. Since $\hat v_\epsm \in H^1(A)$, by the chain rule in $H^1(A)$ the functions $\Psi(\hat v_\epsm )$ satisfy $\Psi( \hat v_\epsm) \in W^{1,1}(A) \subset BV(A)$ and
$$
2| \Psi(\hat v_\epsm) |_{BV_g(A)}= 2\int_{A}  \sqrt{W(\hat v_\epsm)} \| \nabla_g \hat v_\epsm \| dVol_g \leq \sqrt{2}\epsm \E(\hat v_\epsm,A)=\mathcal{F}_\epsm(\hat v_\epsm; w_\epsm,A)  \, .
$$
Taking \eqref{datalbo} into account we have $\hat v_\epsm \to \tilde v={\tilde v}_{w^*}$  in $L^1(A)$ as $m \to \infty$.   By lower semicontinuity of the total variation and using the pointwise equality $2 \Psi(\tilde v)=C_W \tilde v $  we obtain
\begin{equation}
\label{quasigliminf}
C_W | \tilde v|_{BV_g(\overline{B_R})} \leq C_W | \tilde v|_{BV_g(A)} = 2 | \Psi(\tilde v) |_{BV_g(A)} \leq \liminf_\epsm  \mathcal{F}_\epsm(v_\epsm; w_\epsm,A).
\end{equation}

\par Finally, since $\mathcal{F}_\epsm(v_\epsm; w_\epsm,A)=\mathcal{F}_\epsm(v_\epsm; w_\epsm,B_R)+\mu_\epsm (A\setminus B_R))$ and $\mu^*(\partial B_R)=0$ the conclusion follows from \eqref{quasigliminf} and \eqref{datalbo} when $A=B_{\rho}$ and $\rho \searrow R$.

\vskip3pt
\noindent
{\it Step 3 ($\Gamma$-limsup without boundary conditions.)} 
Here we show that, for any given function $v\in BV(B_R;\{-1,+1\})$,  there exists a sequence $v_\epsm \in H^{1}(B_R)$ with $|v_\epsm|\le 1$, $v_\epsm \to v$ in $L^1(B_R)$ and such that 
\begin{equation}\label{glsfree}
\sqrt{2} \epsm \E_\epsm(v_\epsm, B_R) \to C_W |v|_{BV_g(B_R)} \qquad \text{ as } \epsm\to 0. 
\end{equation}
This $\Gamma$-limsup inequality is well understood in the Euclidean setting. The proof in the present case could be obtained by standard localization arguments, freezing the $x$ dependence in the energy density functionals. Here for the reader convenience we sketch the original  proof in \cite{M}, adapting it to the hyperbolic setting. By standard density arguments in the Euclidean setting and formulas \eqref{hyptotvar} and \eqref{halfsptotvar},  the class of functions in $BV_g(B_R;\{-1,+1\})$ with smooth jump set are actually dense in $L^1$ and in energy.  
Therefore, by diagonal arguments in $\Gamma$-convergence we can prove \eqref{glsfree}
assuming $S_v$ smooth.  In this case, following \cite[Proposition 2]{M} it turns out that a recovery sequence is given by 
$v_\epsm(x)=h_\epsm(\tilde d(x, S_v))$, where $h_\epsm$ is the optimal one-dimensional profile given by Proposition \ref{odsprop}, and $\tilde d(x, S_v) =  v(x) d(x, S_v)$ is the hyperbolic  signed distance from $S_v$ (unique up to the sign).   
\vskip3pt
\noindent
{\it Step 4 ($\Gamma$-limsup with boundary conditions).}
In this step we construct a recovery sequence taking into account the boundary conditions. To this purpose let $v\in BV(B_R;\{-1,+1\})$. First we show that the class of functions coinciding with $w^*$ in a neighborhood of $\partial B_R$ are dense in energy and in $L^1(B_R)$.  Indeed,  let $0<\lambda<1$ and set
$$
v_\lambda(x):= 
\begin{cases}
v(x) & \text{ if } |x|\le \lambda R;\\
w^*(x) & \text{ otherwise.}
\end{cases}
$$
Then we have  $v_\lambda\equiv w^*$ near $\partial B_R$ and  $v_\lambda \to v$ in $L^1(B_R)$ as $\lambda \nearrow 1$. Moreover, since  $\mu^*(\partial B_R)= 0$ 
it is easy to prove that $|D_g v_\lambda|(\partial B_{\lambda R}) \to |D_g \tilde v_{w^*}|(\partial B_{R})$, so that 
$\F(v_\lambda ; w^*, B_R) \to \F(v ; w^*, B_R)$ as $\lambda \nearrow 1$.
Therefore, up to a further diagonal argument,  
  without loss of generality we may assume $v\equiv w^*$ in a neighborhood of $\partial B_R$, so that $\F(v;w^* ,B_R) = C_W |v|_{BV_g(B_R)}$. 
\par
Now we aim to  glue together the recovery sequence $v_\epsm \to v$ constructed  in {\it Step 3} with $w_\epsm$, in order to obtain a recovery sequence  which takes into account the boundary conditions. To this purpose, for any fixed $\eta>0$ we construct an approximated recovery sequence $\hat v_\epsm \in H^1_{w_\epsm}(B_R)$  (depending on $\eta$), with $ \hat v_\epsm \to v$ in $L^1(B_R)$ as $m\to\infty$ and   satisfying
\begin{equation}\label{gammadia}
\limsup_m \F_\epsm(\hat v_\epsm; w_\epsm, B_R)  \le   \F( v; w^*, B_{R}) + C\eta.
\end{equation}
Then, the $\Gamma$-limsup inequality  follows from \eqref{gammadia}  by a standard diagonal argument as $\eta \to 0$. 
\par
To prove \eqref{gammadia} let $\delta=\delta(\eta)>0$ be so small  such that the following holds.
\begin{itemize}
\item[i)]
  $v=w^*$ in $C_\delta:= B_R\setminus \overline{B_{R-\delta}}$;
\item[ii)]  
 $\E_\epsm(w_\epsm, C_\delta)\le \eta \epsm^{-1}$ for every $m$; 
\item[iii)]    $\E_\epsm(v_\epsm, C_\delta)\le \eta \epsm^{-1}$ for every $m$.
\end{itemize}  
Notice that, for $\delta$ suitably small  ii) holds since $\mu^*(\partial B_R)=0$,  and iii) is  true since  $v_\epsm$ is a recovery sequence  for $v$ in $B_R$, and therefore also in $C_\delta$, and $|v|_{BV(C_\delta)}\to 0$ as $\delta \to 0$.
For  each $m$ we divide the annulus $C_\delta$ in $M_m:=[\frac{\delta}{\eta \epsm}]$ (where $[\cdot]$ is the integer part) concentric annuli of thickness $\tilde\e_m := \frac{\delta}{M_m}$. In this way we clearly have $\tilde \e_m = \eta_m \epsm$ with $\eta_m\to \eta$ as $m\to \infty$. Since $v_\epsm\to v=w^*$ and $w_\epsm \to w^*$ in $L^1(C_\delta)$ we have $v_\epsm - w_\epsm \to 0$ in $L^1(C_\delta)$. Therefore, by  the mean value theorem  we can choose $k_m\in \{1, \ldots M_m\}$ such that  
\begin{equation}\label{stimal1}
 \frac{1}{{\tilde\e}_m} \int_{\tilde C_{k_m}} |v_\epsm - w_\epsm| \, dx \to 0 \qquad \text{ as } m\to \infty,
\end{equation}
where  $\tilde C_{k_m} = \{R'_m < |x| < R''_m\}$ and $R'_m:= R - \delta + (k_m -1)  {\tilde\e}_m$, $R''_m := R- \delta + k_m  {\tilde\e}_m$.   
Let $\f_\epsm$ be a radial Lipschitz cut-off function such that $\f_{\e_m}(x)\equiv 0$ for $|x|\ge 
R''_m$, $\f_{\e_m}(x)\equiv 1$ for $|x|\le 
R'_m$, and decreases linearly along the rays in $\tilde C_{k_m}$. For all $x\in B_R$ we  set
$$
\hat{v}_\epsm(x):= v_\epsm(x) \f_\epsm(x) + w_\epsm(x) (1 - \f_\epsm(x)).
$$
By construction we have for a.e.  $x\in B_R$
$$
| \nabla \hat{v}_\epsm(x) | \le |\nabla v_\epsm| + |\nabla w_\epsm| + \frac{1}{{\tilde\e}_m} |v_\epsm - w_\epsm|.
$$
Then, by ii) and iii) above, by Young inequality, and the bounds $|w_\epsm|\le 1$, $|v_\epsm|\le 1$ in $B_R$ we obtain
\begin{multline*}
\E_\epsm(\hat v_\epsm, \tilde C_{k_m}) \le C \int_{\tilde C_{k_m}} |\nabla \hat v_\epsm|^2 + W_\epsm(\hat v_\epsm) \, dx \le 
C \int_{C_\delta} |\nabla v_\epsm|^2 + |\nabla w_\epsm|^2 \, dx + \\
+C  \int_{\tilde C_{k_m}} \frac{1}{({\tilde\e}_m)^2}|v_\epsm - w_\epsm|^2 + \frac{1}{(\epsm)^2} \, dx\le C\eta \epsm^{-1}
+ \frac{C \epsm^{-1}}{\eta}   \int_{\tilde C_{k_m}} \frac{1}{{\tilde\e}_m}|v_\epsm - w_\epsm| \, dx . 
\end{multline*}
Therefore, by \eqref{stimal1}, for $m$ large enough (depending only on $\eta$) we have
\begin{equation}\label{stietafix}
\epsm \E_\epsm(\hat v_\epsm, \tilde C_{k_m}) \le C\eta
\end{equation}
By \eqref{stietafix}, in view of ii) above  we have 
\begin{multline*}
\epsm \E_\epsm(\hat v_\epsm, B_R) = \epsm  \E_\epsm( v_\epsm, B_{R'_m}) + \epsm\E_\epsm(\hat v_\epsm, \tilde C_{k_m}) + \\
+ \epsm\E_\epsm(w_\epsm, B_R\setminus \overline{B_{R''_m}}) \le
\epsm  \E_\epsm( v_\epsm, B_{R}) + C\eta
\end{multline*}
Passing to the limit for $m\to\infty$, we obtain
$$
\limsup_m \F_\epsm(\hat v_\epsm; w_\epsm, B_R)  \le  \limsup_m \sqrt{2} \epsm  \E_\epsm( v_\epsm, B_{R}) + C\eta = \F( v; w^*, B_{R}) + C\eta,
$$
so that \eqref{gammadia} holds, and this concludes the proof of the $\Gamma$-limsup inequality. 


\begin{oss}\label{assmuzero}
The assumption $\mu^*(\partial B_R)=0$ is essential in order to identify the boundary  term in the $\Gamma$-limit $\F$. Indeed, for $w^*$ equal to $1$ in $B_R$ and  $w^* = -1$ in 
$B_1\setminus B_R$, it is very easy to construct two approximating sequences $w^{\pm}_\epsm$ for $w^*$ satisfying \eqref{datalbo},  with traces on $\partial B_R$ equal to $\pm1$, respectively. Therefore,  the corresponding $\Gamma$-limit    is clearly given by  \eqref{gali} with $w^*$ replaced by $\pm 1$ on $B_1\setminus B_R$, respectively.  
More  generally, given $\tilde w$ and  $w^*$  it is always possible to  construct an   approximating sequences $w_\epsm$ for $w^*$ such that  the corresponding $\Gamma$-limit is given by  \eqref{gali} with $w^*$ replaced by $\tilde w$.  Thus we see that, removing  the assumption $\mu^*(\partial B_R)=0$,  the $\Gamma$-limit  may depend on the whole sequence $w_{\e_m}$ and not only on $w^*$.
\end{oss}

\subsection{Existence and asymptotic behavior of minimal hypersurfaces}
In this final part, we prove the existence of an entire minimal hypersurface with prescribed behaviour at infinity. First we give a local  energy bound for the minimizers $u_\e$ which allows to obtain a limiting function $u^*\in BV_{loc}(B_1;\{-1,1\})$ with the desired behaviour at infinity. Then, we can apply the $\Gamma$-convergence result in the previous subsection to get the area-minimizing property of the jump set $S_{u^*}$ and to conclude the proof of Theorem \ref{micorre}.

\begin{lemma}\label{unicobo}
Let  $0<R<1$, let $\e < R/2$ and let $u_\e$ be a  local minimizer of \eqref{enfun}. Then we have
$ \E_{\e}(u_\e, B_R) \le {C}\e^{-1}$, where $C$ is a constant depending only on $R$.
\end{lemma}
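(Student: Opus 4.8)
The plan is to test the local minimality of $u_\e$ against a competitor that, inside a ball, replaces $u_\e$ by the constant $+1$ (a zero of $W$), keeping only a transition shell of thickness comparable to $\e$. The essential difficulty is that any admissible competitor must reproduce the trace of $u_\e$ on the sphere bounding the modification region, and this trace carries no uniform (in $\e$) bound; consequently the comparison will not close in one step but will only yield a \emph{self-improving} inequality of Caccioppoli type, which is then closed by an iteration argument. The only non-cosmetic external input is a crude a priori bound used to absorb a geometrically small remainder.

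First I would reduce to $|u_\e|\le 1$: truncation does not increase the energy and sends local minimizers to local minimizers (a standard fact), and in any case the minimizers of Theorem~\ref{mainthm} to which the lemma is applied satisfy $|u_\e|<1$ by Proposition~\ref{psitocca}. Since the metric coefficients in \eqref{enfun3} are bounded above and below on $B_{R_*}$, $R_*<1$, by constants depending only on $R_*$, it is equivalent to bound the Euclidean energy $\int_{B_R}\bigl(\tfrac12|\nabla u_\e|^2+W_\e(u_\e)\bigr)\,dx$. Moreover, from \eqref{equa3} together with $|u_\e|\le 1$ and interior elliptic estimates (the right-hand side being bounded by $C(R_*)\e^{-2}$), one gets the crude bound $\E_\e(u_\e,B_{R_*})\le C(R_*)\,\e^{-4}$, whose only role will be to kill a remainder at the end.

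Set $R_*:=(1+R)/2$. For $R\le t<s\le R_*$, by the coarea formula I would select $\rho\in(t,s)$ with $\int_{\partial B_\rho}|\nabla_{\tan}u_\e|^2\,d\hs^{n-1}\le \frac{C(R_*)}{s-t}\,\E_\e(u_\e,B_s)$ (using $\E_\e(u_\e,B_s)<\infty$ as $u_\e\in C^2(B_1)$). Take $A:=B_s$ in Definition~\ref{local} and define $v:=u_\e$ on $B_s\setminus B_\rho$ and outside $B_s$; $v(x):=u_\e(\rho x/|x|)$ on $\{\rho-\e\le|x|\le\rho\}$; $v$ the linear-in-$|x|$ interpolation between $u_\e(\rho x/|x|)$ and $+1$ on $\{\rho-2\e\le|x|\le\rho-\e\}$; and $v\equiv+1$ on $B_{\rho-2\e}$. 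Then $v\in H^1_{loc}(\Hyp^n)$, $|v|\le1$, and $\supp(u_\e-v)\subseteq\overline{B_\rho}\subset\subset B_s$. A direct computation shows that on the two shells the $0$-homogeneous part contributes at most $C(R_*)\,\e\int_{\partial B_\rho}|\nabla_{\tan}u_\e|^2$ to the Dirichlet energy, the cut-off contributes at most $C(R_*)\,\e^{-1}$ (because $|u_\e-1|\le 2$ and $|\nabla(\text{cut-off})|\le C\e^{-1}$), the potential term contributes at most $C(R_*)\,\e^{-1}$ (since $|v|\le1$ and each shell has volume $\le C(R_*)\,\e$), and $B_{\rho-2\e}$ contributes nothing since $W(1)=0$. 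Hence $\E_\e(v,B_\rho)\le C(R_*)\bigl(\frac{\e}{s-t}\E_\e(u_\e,B_s)+\e^{-1}\bigr)$, and since $\E_\e(u_\e,B_t)\le\E_\e(u_\e,B_\rho)\le\E_\e(v,B_\rho)$ by local minimality, we obtain, with $C_*:=C(R_*)$,
\begin{equation*}
\E_\e(u_\e,B_t)\ \le\ \frac{C_*\,\e}{\,s-t\,}\,\E_\e(u_\e,B_s)\ +\ \frac{C_*}{\e}\qquad\text{for all }R\le t<s\le R_*.
\end{equation*}

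Finally I would close the estimate by iteration. Choosing $s-t=2C_*\e$ makes the first coefficient equal to $1/2$, so applying the displayed inequality along $t_k:=R+2kC_*\e$, $k=0,\dots,N$ with $N:=\lfloor (R_*-R)/(2C_*\e)\rfloor$, gives $\E_\e(u_\e,B_R)\le 2^{-N}\E_\e(u_\e,B_{R_*})+\frac{2C_*}{\e}\le 2^{-N}C(R_*)\e^{-4}+\frac{2C_*}{\e}$. Since $N\sim c(R)\,\e^{-1}$, the first term decays faster than any power of $\e^{-1}$ and is $\le\e^{-1}$ once $\e$ is below a threshold depending only on $R$; on the complementary (compact) range of $\e$ the a priori bound $\E_\e(u_\e,B_R)\le C(R_*)\e^{-4}\le C(R)\e^{-1}$ already suffices. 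In all cases $\E_\e(u_\e,B_R)\le C(R)\,\e^{-1}$. The decisive obstacle is precisely the trace matching: because every admissible competitor must coincide with $u_\e$ near the sphere it modifies, the term $\E_\e(u_\e,B_s)$ cannot be avoided on the right-hand side, which is what rules out a one-step comparison and forces the self-improving inequality together with the iteration described above.
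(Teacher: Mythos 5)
Your proof is correct, but it takes a substantially more elaborate route than the paper's. The paper's argument is a one-step comparison: it first uses interior elliptic regularity (after rescaling the equation by $\e$, so that the reaction term $f_\e(u_\e)$ becomes $O(1)$) to obtain the \emph{sharp} pointwise bound $|\nabla u_\e|\le c\e^{-1}$ on $B_R$, and then compares $u_\e$ against the competitor $v_\e:=\f + (1-\f)u_\e$, where $\f$ is a radial cutoff equal to $1$ on $B_{R-\e}$ and to $0$ on $\partial B_R$. With the sharp gradient bound, $|\nabla v_\e|\le c\e^{-1}$, so the energy of $v_\e$ lives entirely in the shell $B_R\setminus B_{R-\e}$ of volume $\le C(R)\e$ with energy density $\le C\e^{-2}$, giving $\E_\e(v_\e,B_R)\le C\e^{-1}$ in one step. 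Your proof instead relies only on the cruder, unrescaled elliptic bound $|\nabla u_\e|\lesssim\e^{-2}$, which is insufficient for a direct comparison; you compensate with a coarea selection of a good slice, a zero-homogeneous extension of the trace, and a Caccioppoli-type iteration that beats the initial $\e^{-4}$ bound down to $\e^{-1}$ via the decay of $2^{-N}$ with $N\sim\e^{-1}$. This works and has the virtue of not invoking the rescaled regularity estimate; the price is length and the iteration machinery. One imprecision in your commentary: the claim that ``trace matching\dots rules out a one-step comparison'' is not accurate --- the one-step argument does close once the sharper bound $|\nabla u_\e|\le c\e^{-1}$ is in hand, which is exactly the route the paper takes.
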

\begin{proof}
Since $u_\e$ are uniformly bounded, by equation \eqref{equa3} and by standard elliptic regularity we have that $|\nabla u_\e|\le c \e^{-1}$, where $c$ depends only on $R$.  We deduce that  
$$
 \| \nabla u_\e \|^2 + W_{\e} (u_\e) \le \frac{c}{\e^2} \qquad \text{ in } B_R,
$$
where $c$ depends only on $R$. Let $\f_\e$ be a radial cut-off function, equal to $1$ for $|x|\le R-\e$, and decreasing linearly to zero along rays for $R-\e \le |x| \le R$. Let us set $v_\e:= \f + (1-\f) u_\e$. By construction we have that $v_\e= u_\e$ on $\partial B_R$, and $|\nabla v_\e|\le c \e^{-1}$, with $c$ depending only on $R$.   Thus, by local energy minimality of $u_\e$ we have 
$$
\E_\e(u_\e, B_R)\le \E_\e (v_\e, B_R) \le \E_\e(v_\e,B_R\setminus B_{R-\e}) \le \frac{C}{\e},
$$ 
where $C$ is a constant depending only on $R$. 
\end{proof}

\begin{proof}[Proof of Theorem \ref{micorre}] 
We will prove claims i) and ii) of the theorem separately, using the Poincar\'e ball model. Claim iii) is well known and it has been already discussed in the Introduction.
\vskip3pt
\noindent
{\it Proof of i).} 
By Lemma \ref{unicobo} we have $\F_\epsm(u_{\epsm};u_{\epsm}, B_R)\le C$, and hence by Theorem \ref{gcthm}, i),  passing to a subsequence we have   $u_\epsm\to u^*$ in $L^1(B_R)$ for some $u^*\in BV(B_R;\{-1,1\})$. Thus a simple diagonal argument yields
$u_\epsm\to u^*$ in $L^1_{loc}(B_1)$ for some $u^*\in BV_{loc}(B_1;\{-1,1\})$ as $m \to \infty$. 
\par 
We pass to the proof of  $ S_{u^*}\subset \overline{\text{conv}(F)}$. To this purpose, notice that if $ x\in B_1\setminus  \overline{\text{conv}(F)}$, then there is  a neighborhood $N_x$ of $x$ compactly contained in $B_1\setminus  \overline{\text{conv}(F)}$, and
there exists an elementary solution $U_\e$ 
such that either $0< U_\e \le u_\e\le 1$ with  $U_\e \to 1$ uniformly in $N_x$, or
$-1 \le u_\e \le U_\e<0$ with  $U_\e \to -1$ uniformly in $N_x$. 
In both cases we deduce that $u^*$ is constant in $N_x$,  so that in particular 
 $S_{u^*} \cap N_x = \emptyset$. By the arbitrariness of $ x\in B_1\setminus  \overline{\text{conv}(F)}$ we conclude  $ S_{u^*}\subset \overline{\text{conv}(F)}$, which clearly implies $ \overline{ S_{u^*}} \cap \sinf \subset F$ . 
\par
It remains only to prove the inclusion $ \partial \Om^+ \cap \partial \Om^- \subseteq \overline{ S_{u^*}}$. Let $p\in 
\partial \Om^+ \cap \partial \Om^-$, and for any given $\delta > 0 $ let us fix two points  $q^\pm \in \Om^\pm$  such that $q^\pm \in I_\delta(p)$. Moreover, let $0<\rho<\delta$ be such that $B_{\rho}(q^+) \cap B_{\rho}(q^-) = \emptyset$ and $B_{\rho}(q^\pm) \cap B_1 \cap  \overline{\text{conv}(F) }= \emptyset$.  Set  $B^\pm:= B_{\rho}(q^\pm) \cap \sinf\subset \Om^\pm$ and  consider the tube $T_{\rho} := \overline{\text{conv} (B^+ \cup B^-)} $. Then, since $u^* = \pm 1$ on $B_{\rho}(q^\pm) \cap T_\rho$, we have  that  $u^*$ takes both values $+1$ and $-1$ on sets of positive measure in  $T_{\rho}$, that clearly implies 
 $|D u^*|(T_{\rho}) = \hs^{n-1}(S_{u^*}\cap T_{\rho}) >0$. Therefore $B_{C \delta} (p)\cap S_{u^*} \neq \emptyset $ where $C$ is a constant independent of $\delta$, and this,  by the arbitrariness of $\delta$, yields $p\in \overline{S_{u^*}}$, which  concludes the proof of property {\it i)}.
\vskip3pt
\noindent
{\it Proof of  ii).}
Let $u^*$ as given by part $i)$ and let $v^*\in BV_{\rm{loc}}(B_1;\{ -1, 1\})$ such that the support of $u^*-v^*$ is compactly contained in $B_R$ for some $R \in (0,1)$. 
Note that, since $u_\epsm \to u^*$ in $L^1_{loc}(B_1)$ and Lemma \ref{unicobo} holds, passing to a subsequence if necessary, we may assume that $\{u_\epsm\}$ satisfies assumption \eqref{datalbo} (with $w_\epsm=u_\epsm$).
Thus, changing $R$ slightly if necessary, we may also assume that $\mu^*(\partial B_R)=0$, because such condition may fail for at most countably many radii. Let $v_\epsm \to v^*$ in $L^1(B_R)$ be a recovery sequence for $v^*$ in $B_R$ with $u_\epsm$ as boundary data as given by  Theorem \ref{gcthm}. Thus, combining the $\Gamma$-convergence result given by  Theorem \ref{gcthm}  and the energy minimality of each $u_\epsm$ we obtain
$$
 \F(u^*; u^*,B_R)\le \liminf_\epsm \F_\epsm(u_\epsm;u_\epsm,B_R)\le  \limsup_\epsm \F_\epsm(v_\epsm;u_\epsm,B_R) \leq \F(v;u^*,B_R)  \, ,
$$
i.e. $|u^*|_{BV_g(B_R)} \leq |v^*|_{BV_g(B_R)} $ as claimed.
\end{proof}

\section*{Acknowledgments}
We wish to thank Lorenzo Bertini, Gabriele Mondello e Andrea Sambusetti for interesting discussions on the subject.

\end{document}